\documentclass[12pt]{amsart}

\usepackage{amsmath}
\usepackage{amssymb}
\usepackage{amsthm}
\usepackage[utf8]{inputenc}
\usepackage{booktabs}
\usepackage{xcolor}
\usepackage{graphicx}
\usepackage{hyperref}
\usepackage{subcaption}
\usepackage{float}
\usepackage{nicefrac}
\usepackage{tikz}
\usetikzlibrary{positioning,shapes.geometric, arrows}
\usepackage[percent]{overpic}
\usepackage{mathtools}

\theoremstyle{definition}
\newtheorem{Def}{Definition}[section]

\newtheorem{Exe}[Def]{Example}

\newtheorem*{Conv}{Conventions}

\newtheorem{Rem}[Def]{Remark}

\theoremstyle{theorem}
\newtheorem{The}[Def]{Theorem}
\newtheorem{Pro}[Def]{Proposition}
\newtheorem{Cor}[Def]{Corollary}
\newtheorem{Lem}[Def]{Lemma}

\newcommand{\R}{\mathbb{R}}
\newcommand{\C}{\mathbb{C}}
\newcommand{\K}{\mathbb{K}}
\newcommand{\N}{\mathbb{N}}
\newcommand{\Z}{\mathbb{Z}}

\newcommand{\Sph}{\mathbb{S}}
\newcommand{\lcs}{\frak{lcs}}

\newcommand{\nocontentsline}[3]{}
\newcommand{\tocless}[2]{\bgroup\let\addcontentsline=\nocontentsline#1{#2}\egroup}
\title[Derived sheaves in $\lcs$ geometry]{Derived sheaves in locally conformally symplectic geometry}

\author{Adrien Currier}
\address{Peking University, Beijing International Center for Mathematical Research,	Beijing, China}
\email{adrien.currier@pku.edu.cn}
\date{}

\begin{document}
	\maketitle
	\begin{abstract}
		In this paper, we use derived sheaves to study rigidity phenomena in the cotangent bundles of manifolds endowed with some locally conformally symplectic ($\lcs$) structure. Taking inspiration from the work  of Guillermou, Kashiwara and Shapira, we define a quantization for ``$\lcs$'' Hamiltonian isotopies, as well as new quantities: the asymptotic Betti numbers of a sheaf. We then show that those quantities are ``well behaved'' with respect of said quantization and use this to give a sheaf-theoretical proof of the Chantraine-Murphy theorem. We also consider the quantization in light of the Tamarkin morphism and the displacement energy of sheaves. This allows us to derive a non-squeezing theorem for $\lcs$ geometry that is similar, although not identical, to the one recently proven by Bertelson, Chakravarthy, and Sandon. Indeed, the result shown in this paper is more in line with the contact non-squeezing theorem shown by Eliashberg, Kim and Polterovich in 2006.
	\end{abstract}
	\tableofcontents
	\section{Introduction}
	
	Locally conformally symplectic ($\lcs$) geometry is a generalization of symplectic geometry in which transition maps are taken to be symplectomorphisms of $\mathbb{R}^{2n}$ up to a positive constant. Although first introduced by H.-C. Lee in the 40s (see \cite{Lee1943AKO}), the subject remained in hibernation until being picked up by Vaisman in the 70s (see \cite{Vaisman1976OnLC}), to whom we owe the name. An advantage of allowing the transition maps to only be symplectic up to a factor is that $\lcs$ structures are ``flexible'', very much akin to contact structures in this regard. Indeed, strictly more manifolds can be endowed with $\lcs$ structures than can be endowed with symplectic structures (see \cite{eliashberg2020making} or \cite{Bertelson2021} for more details, see \cite{Angella2017StructureOL} for specific examples). However, said ``flexibility'' also means that rigidity results are more difficult to obtain. This issue is best illustrated in \cite{currier2025projectionexactlagrangianslocally} with the construction of exact Lagrangians (in the $\lcs$ sense) in cotangent bundles of closed manifolds whose Euler characteristic is different from that of the base manifold. This is to compare with a result by Abouzaid and Kragh (see \cite{Abouzaid2020TwistedGF}) that shows that, in the symplectic setting, the canonical projection restricted to an exact Lagrangian is a simple homotopy equivalence. At the core of the construction of those exact ``$\lcs$'' Lagrangians is the fact that Legendrians in $J^1M$ can be lifted to exact Lagrangians in $T^*(M\times\Sph^1)$. 
	
	More generally, adapting classical tools from symplectic geometry to the $\lcs$ setting is not a straightforward process. For example, as pointed out in \cite{Chantraine2016ConformalSG} by B. Chantraine and E. Murphy, naive adaptations of Floer theory to this setting will tend to run into some problems due to the failure of Gromov's compactness theorem for pseudo-holomorphic curves. It should be noted that some progress has been made in that direction in \cite{Oh2023PCLCM} for some specific cases. However, there have been some successes in adapting generating functions to prove rigidity results in $\lcs$ geometry (an approach pioneered in \cite{Chantraine2016ConformalSG}, and used in \cite{currier2025morsenovikovhomologybetacriticalpoints} and \cite{bertelson2026nonsqueezingglobalrigidityresults}). This makes sheaf theory an attractive option to tackle those kinds of problems. 
	
	Those three papers illustrate three different philosophies for the use of generating functions. In \cite{Chantraine2016ConformalSG}, the authors leverage Morse-Novikov homology, which is a homology theory tailor-made to study the $1$-form $d_\beta 1$, where $d_\beta$ is the Lichnerowicz derivative associated to $\beta$ (see definition \ref{Lichnerowicz}). In their paper, Chantraine and Murphy carefully exploit this homology and cut the base manifold to study the more general $d_\beta F$ for a generating function $F$. Their approach partially relies on the fact that exact ``$\lcs$'' Lagrangians and ``$\lcs$'' Hamiltonian isotopies in $T^*M$ can be lifted to Legendrians and contact isoptopies in $J^1M$. They then use Chekanov's persistence theorem for generating functions in contact geometry. In \cite{bertelson2026nonsqueezingglobalrigidityresults},  Bertelson, Chakravarthy, and Sandon take inspiration from the study of contact diffeomorphisms to define the notion of generating functions for $\lcs$ Hamiltonian diffeomorphisms using the $\lcs$ box product, as defined in \cite{chantraine2024productslocallyconformalsymplectic}. However, they eschew Morse-Novikov homology in favor of singular homology, motivated by their use of spectral selectors for generating functions. Finally, in \cite{currier2025morsenovikovhomologybetacriticalpoints}, the author prefers to lift the $\lcs$ objects of $T^*M$ to symplectic objects in some cover of $T^*M$. The author then shows that there is a way of leveraging Morse theory (as opposed to Morse-Novikov theory) to study generating functions. As we will see in this paper, this approach can be adapted to the sheaf-theoretical setting. Indeed, this paper might be best understood as bridging the approaches of \cite{Chantraine2016ConformalSG} and \cite{bertelson2026nonsqueezingglobalrigidityresults} using the insights of \cite{currier2025morsenovikovhomologybetacriticalpoints} and of remark \ref{bertelson&co}, all in a sheaf-theoretic context. It is also the author's hope that a good sheaf theory might shed some light on how Floer theory should be adapted to this setting.
	
	While this last point is, for now, pure speculation, a sheaf theory for $\lcs$ geometry is of interest in its own right. Indeed, for the last 20 years or so, sheaf theory has been successfully used in symplectic geometry to prove results that had previously needed some Floer-theoretical touch (see, for example, \cite{Guillermou2012SQHIQNP} and \cite{Guillermou2019SheavesAS}), and has inspired some new strategies for the study of rigidity in symplectic geometry.\newline
	
	The aim of this paper is to showcase techniques to study $\lcs$ geometry via sheaf theory. To this end, we will provide a sheaf-theoretical proof of the following theorem, originally proven by Chantraine and Murphy using generating functions:
	
	\begin{The}\label{thm1}(\cite{Chantraine2016ConformalSG})
		Let $M$ be a closed manifold, $\beta\in\Omega^1(M)$ be closed, and $\lambda$ be the canonical Liouville form on $T^*M$. Then, for a generic Hamiltonian diffeomorphism $\phi$ of $(T^*M,\lambda,\beta)$, we have that
		\[\#\phi(M)\pitchfork M\geq\sum_i rank(HN_i(M,\beta)),\]
		where $HN$ stands for Morse-Novikov homology.
	\end{The}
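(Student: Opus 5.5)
We plan to argue through a sheaf quantization of the $\lcs$ Hamiltonian isotopy, built on a cover where the structure becomes symplectic. Let $\pi:\widetilde M\to M$ be the minimal cover on which $\pi^*\beta=df$ is exact; it is a $\Z^k$-cover, with $k$ the rank of the period group of $\beta$. On $T^*\widetilde M$ the form $e^{f}\,d_{\beta}\lambda$ is conformally the symplectic form of $e^{f}$-rescaled type. An $\lcs$ Hamiltonian isotopy $\phi_t$ of $(T^*M,\lambda,\beta)$ lifts to a $\Z^k$-equivariant-up-to-conformal-factor Hamiltonian isotopy of $T^*\widetilde M$. Equivalently, following the remark that $\lcs$ exact Lagrangians lift to Legendrians in $J^1M$, it lifts to a homogeneous Hamiltonian isotopy of $\dot T^*(\widetilde M\times\R)$ that commutes with the deck action, the deck action acting on the $\R$ factor by the periods of $\beta$ through dilation. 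We would then apply Guillermou--Kashiwara--Schapira to this lifted homogeneous isotopy. The result is a kernel $K$ on $(\widetilde M\times\R)^2\times I$ whose microsupport is the graph of the isotopy and which is equivariant for the deck group, by uniqueness of the quantization.

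Next we take the sheaf $F_0$ quantizing the zero section. In the conic picture this is the constant sheaf on $\{(x,t): t\ge f(x)\}$ or its $\lcs$-analogue in $\widetilde M\times\R$. We set $F_1=K_1\circ F_0$, whose microsupport in the reduced space is the lift of $\phi(M)$. The intersection $\phi(M)\pitchfork M$ lifts to a $\Z^k$-invariant set of transverse intersection points, with finitely many orbits, and their number is $\#\phi(M)\pitchfork M$. We plan to compute $R\mathrm{Hom}(F_0,F_1)$ in a $\Z^k$-equivariant way, that is, as a module over $\K[\Z^k]$ (the Novikov-type group ring). By microlocal Morse theory, which for transverse, generic $\phi$ reduces to the local contribution of one copy of $\K$ per intersection point, this module is computed by a complex of free $\K[\Z^k]$-modules with rank equal to the number of orbits of intersection points. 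So the sum of the Betti numbers over the Novikov field $\widehat{\K[\Z^k]}$, after completing in the direction $\beta$, is at most $\#\phi(M)\pitchfork M$.

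The remaining step is invariance. We must show that this equivariant $R\mathrm{Hom}$, after Novikov completion, does not depend on the isotopy, since the asymptotic Betti numbers are well behaved under quantization. For $\phi=\mathrm{id}$ it equals $HN_*(M,\beta)$ computed with a $C^1$-small perturbation, namely Morse--Novikov homology of a closed form cohomologous to $\beta$. This is where we expect the main obstacle. The isotopy is not compactly supported in the $\R$-direction uniformly, because the conformal factors grow along the deck orbits. Hence the GKS kernel is only "locally bounded", and $R\mathrm{Hom}(F_0,K_t\circ F_0)$ may fail to be locally constant in $t$ before completion. Our first attempt would be to filter by the values of $f$ and show the relevant cohomology is constant along $t$ up to torsion supported in the negative $\beta$-direction, which is killed by Novikov completion. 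This would be done via Tamarkin-style truncation on each sublevel set of $f$, using that the microsupport of $K_t\circ F_0$ stays in a compact region modulo deck translations. Combining invariance with the upper bound gives $\sum_i \operatorname{rank} HN_i(M,\beta)\le\#\phi(M)\pitchfork M$.
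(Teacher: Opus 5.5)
Your lift and its quantization agree with the paper's up to conjugation. A homogeneous lift on which deck transformations dilate the $\R$-factor is the classical lift $\Phi^{cl}$. The paper instead uses $\Phi^{eq}=L^{-1}\circ\Phi^{cl}\circ L$, which commutes with the plain deck action, so $K^{eq}\circ F_0$ is the pull-back of a bounded, compactly supported sheaf on $M\times\R$ (Proposition \ref{betakernel}). The non-uniformity in $s$ you worry about is therefore not the real obstruction. (Start from $F_0=\K_{\widetilde M\times\{0\}}$: the constant sheaf on $\{t\geq f(x)\}$ corresponds to the graph of $-df$, not to the zero section, and is not deck-invariant.)

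The genuine gap is your choice of invariant: $R\mathrm{Hom}(F_0,K_1\circ F_0)$ is not invariant under the isotopy, with or without completion. Take $h\equiv\epsilon$ near the zero section, cut off far away, with $\epsilon$ small.
\begin{itemize}
\item Since $d_\beta\epsilon=-\epsilon\beta$, $\phi_1$ translates the fibres by $\pm\epsilon\beta$ near $M$, so $\phi_1(M)\cap M$ is the zero set of $\beta$.
\item Near the conormal of $\widetilde M\times\{0\}$ one has $f^{eq}=\epsilon\sigma$, so $\Phi^{eq}_1$ is there a translation of $s$ by $\pm\epsilon$. Hence $K_1\circ F_0\simeq\K_{\widetilde M\times\{\pm\epsilon\}}$ and $R\mathrm{Hom}(F_0,K_1\circ F_0)=0$.
\item Yet this is a $C^1$-small perturbation of the identity whose intersection points are the zeros of $\beta$ itself. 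That is exactly where you propose to read off $HN_*(M,\beta)$, which is nonzero whenever $\chi(M)\neq0$.
\end{itemize}
With $F_0=\K_{\{s\geq0\}}$ instead, you get $0$ or $R\Gamma(\widetilde M;\K)$ according to the sign of $\epsilon$, for the same intersection set. So the step you flag as the main obstacle cannot be carried out for this object. As in \cite{Guillermou2012SQHIQNP}, what survives the isotopy is the global sections of $K_t\circ F_0$ over the whole $s$-line. It survives because $SS(K\circ F_0)$ contains no pure $dt$-covectors (Proposition \ref{PropCompositionKernel}).

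Separately, the microlocal Morse lemma does not give you a free $\K[\Z^k]$-complex with one generator per orbit. On the cover, each intersection yields a whole orbit of critical points with critical values $e^{-c_\alpha}s_0$ ($c_\alpha$ the periods of $\beta$) accumulating at $0$. Moreover, the Morse function is not proper on the support.

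The paper sidesteps equivariant modules and completions by normalizing growth along an exhaustion: $c_j(F)=\limsup_k b_j(F_{W_k\setminus\partial_-W_k})/(2k+1)^r$ with $W_k=\bigcup_{\alpha\in\mathcal H_k}\alpha(\overline V)$. On each compact $W_k\times\R$ the classical tools apply. All defects sit over $\partial W_k$, which involves only $O((2k+1)^{r-1})$ fundamental domains, negligible after dividing by $(2k+1)^r$. The argument then runs in three steps.
\begin{itemize}
\item The Morse lemma for $\psi=e^{-g}s$, whose graph $\rho_{eq}$ sends into the zero section so that it detects every intersection point, combined with simplicity, gives $\sum_jc_j(K_t\circ F_0)\leq\#\phi_t(M)\pitchfork M$ (Propositions \ref{MorseFaisceaux} and \ref{Propositioncontagefaisceaux}).
\item Non-characteristic deformation in $t$ on $W_k\times\R\times[s,t]$ fails only over $\partial W_k$, so the $c_j$ do not depend on $t$ (Corollary \ref{Guillermoucorollaire1.7}).
\item Novikov homology enters only at the very end, through the one-sided inequality $c_j(F_0)\geq\operatorname{rank}HN_j(M,\beta)$ proved in \cite{currier2025morsenovikovhomologybetacriticalpoints}, not through any identification of a completed module with $HN_*$.
\end{itemize}
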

	
	More precisely, we will show that $\#\phi(M)\pitchfork M\geq\sum_i c_j(\R_M)$, where $c_j$ is the asymptotic Betti number associated to $\beta$ (see definition \ref{defNovikov}), and theorem \ref{thm1} is then derived from the fact that, as established in \cite{currier2025morsenovikovhomologybetacriticalpoints}, $c_j(\R_M)\geq rank(HN_i(M,\beta))$. 
	
	We will also show a result similar, although not identical, to a result proven in \cite{bertelson2026nonsqueezingglobalrigidityresults} via generating functions:
	
	\begin{The}\label{thm3}
		Let $R_1$ and $R_2$ be such that, for some $k\in \N$, $\pi R^2_1\geq k\geq \pi R^2_2$. Define $D_{R_2}$ as the disc of radius $R_2$ in $\R^2$. Then there is no compactly supported Hamiltonian isotopy $\phi$ of $(\R^{2n}\times\Sph^1_z\times\Sph^1_\zeta,\lambda_{\R^n}+dz,d\zeta)$ such that \[\phi\left(\overline{\mathbb{B}_{R_1}^{2n}\times\Sph^1_z\times\Sph^1_\zeta}\right)\subset D_{R_2}\times\R^{2n-2}\times\Sph^1_z\times\Sph^1_\zeta.\]
	\end{The}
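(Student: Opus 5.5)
The plan is to reduce Theorem \ref{thm3} to a sheaf-theoretic non-squeezing statement in the spirit of Chiu and Tamarkin, as used for the contact non-squeezing of Eliashberg--Kim--Polterovich. The $\lcs$ structure $(\R^{2n}\times\Sph^1_z\times\Sph^1_\zeta,\lambda_{\R^n}+dz,d\zeta)$ lifts to the cover where $\zeta\in\R$, and there the $\lcs$ form becomes conformal to an honest symplectic form. The $\Sph^1_z$-factor with the term $dz$ plays the role of the periodic Reeb direction. Concretely, I would realize $\R^{2n}\times\Sph^1_z\times\Sph^1_\zeta$ as (an open part of) $T^*(\R^n\times\Sph^1_\zeta)$ twisted by $d\zeta$, together with a circle variable, and apply the quantization of $\lcs$ Hamiltonian isotopies constructed earlier in the paper. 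This yields a kernel $K_\phi$ on the appropriate product, with the time variable replaced by the Lichnerowicz-twisted variable. The $\Sph^1_z$ direction forces us to work with sheaves on $\R^n\times\Sph^1\times\R_t$ that are equivariant under $t\mapsto t+1$, i.e.\ with the Tamarkin category modulo $\Z$-translation. This is the origin of the integrality condition $\pi R_1^2\geq k\geq\pi R_2^2$.

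\textbf{Step 1.} To each open set $U$, attach the projector (Tamarkin/Chiu) object $P_U$ in the $\lcs$ Tamarkin category, built from the ball $\mathbb{B}^{2n}_{R_1}$ and the cylinder $D_{R_2}\times\R^{2n-2}$. Then take the $\Z$-periodized version coming from $\Sph^1_z$, and the twisted version coming from $\Sph^1_\zeta$.

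\textbf{Step 2.} Show that if $\phi$ maps the closed ball-type set into the cylinder, then composition with $K_\phi$ produces a morphism
\[\mathrm{Hom}(P_{\text{cyl}},P_{\text{cyl}})\to\mathrm{Hom}(P_{\text{ball}},P_{\text{ball}})\]
compatible with the torsion/translation maps $\tau_c$ by the displacement-energy results proved earlier. This should follow from the functoriality of the quantization and from the morphism $\mathbb{1}\to K_\phi$ given by the Tamarkin morphism.

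\textbf{Step 3.} Compute the periodized endomorphism groups. In Chiu's computation for a ball of capacity $a$, the identity element stays nonzero under $\tau_c$ exactly when $c<a$. After periodizing by $\Z$, the relevant invariant becomes whether some integer $k$ lies in $[\pi R_2^2,\pi R_1^2]$. For the ball, the class at level $k$ survives since $\pi R_1^2\geq k$. For the cylinder, it dies since $\pi R_2^2\leq k$. Steps 2 and 3 together then give a contradiction.

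The main obstacle is Step 3 in the $\lcs$ setting. One must check that the $\Sph^1_\zeta$ twist, which rescales the conformal factor along the deck transformations, does not destroy the capacity computation. I would handle this using the asymptotic Betti numbers and the lift to the cover. Because the Hamiltonian is compactly supported, the quantized kernel is trivial outside a compact set in $\zeta$. Comparing sheaves on one fundamental domain then reduces the question to the symplectic/contact computation, up to a conformal factor that can be normalized to $1$ on the support. The boundary case $\pi R_1^2=k$ is where closedness of the ball in the statement is needed. There I would use the closed-set version of the projector and a limiting argument in $R_1$.
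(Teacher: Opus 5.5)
Your proposal has a genuine gap exactly where the $\lcs$ structure enters, and the mechanism you suggest for that step fails.

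\textbf{The manifold and its lift.} The manifold is not an open part of $T^*(\R^n\times\Sph^1_\zeta)$ together with an extra circle. It is the \emph{quotient} of $T^*(\R^n\times\Sph^1_\zeta)$ (Lee form $d\zeta$) by integer translation in the fiber coordinate $z$ dual to $\zeta$, and this quotient is what produces $\Sph^1_z$. On the integral cover $\R^n\times\R_\zeta$, the lifted Hamiltonian, and hence $K^{eq}$, is invariant under the deck transformations $\zeta\mapsto\zeta+1$ (Lemma \ref{lem3.1}). So it is not trivial outside a compact set in $\zeta$, and restricting to one fundamental domain decouples nothing.

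\textbf{Why your fix for the conformal factor fails.} Both sets you compare contain the whole circle $\Sph^1_\zeta$. Their lifts are therefore unbounded in $\zeta$, and the factor $e^{\zeta}$ cannot be normalized to $1$ on them. Asymptotic Betti numbers do not help either: they measure the growth of cohomology along the exhaustion $W_k$ and carry no information about Tamarkin morphisms or capacities. What actually removes the conformal factor is a cancellation you never use. The map $\pi_g\circ\rho_{eq}$ sends $(x,\zeta,\xi_x,z,s,\sigma)$ to $(x,\xi_x/\sigma)$ in the $\R^{2n}$-factor, to $z/\sigma+s$ mod $1$ in $\Sph^1_z$, and to $\zeta$ mod $1$ in $\Sph^1_\zeta$. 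The $e^{-g}$ in $\rho_{eq}$ cancels the $e^{g}$ in $\pi_g$. The ball and cylinder conditions thus become the classical, $\zeta$-independent conditions on $(x,\xi_x/\sigma)$.

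\textbf{Where the integrality comes from, and what is wrong with Step 2.} The Hamiltonian is $1$-periodic in the fiber coordinate $z$. The formula $f^{eq}=\sigma\,(\pi_g\circ\rho_{eq})^*f$ then shows that $f^{eq}$ is invariant under $s\mapsto s+1$, so $\Phi^{eq}$ commutes with $T_1$. By uniqueness of the quantization, $K^{eq}_t\circ T_{k*}F\simeq T_{k*}(K^{eq}_t\circ F)$ holds for $k\in\Z$ only, not for all $c$ as your Step 2 asserts. The morphism from the identity kernel $\K_\Delta$ to $K_\phi$ that you invoke exists only for non-negative isotopies. It is not available for a general compactly supported $\phi$.

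\textbf{How the paper concludes.} Once integer equivariance is established, the paper needs no projectors.
\begin{enumerate}
\item Since $\tau_r$ factors through $\tau_k$ for $r\geq k$, one gets Lemma \ref{ref}: $e(F)>k$ implies $e(K^{eq}_t\circ F)\geq k$.
\item This is applied to Guillermou's sheaf $G$, rescaled so that $e(G)\geq \pi R_1^2$ and pulled back to be constant in the Lee coordinate.
\item It is contradicted by the strip bound $e\leq 4a^2<k$. This bound is obtained by reshaping a closed disc $\overline{D}_{R_3}$, $R_3<R_2$, that contains the image into a square $]-a,a[^2$ with $4a^2<\pi R_2^2\leq k$.
\item The equality cases are handled by compactness.
\end{enumerate}

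\textbf{If you keep Chiu's projector route.} You would additionally have to do three things, none of which your outline supplies:
\begin{enumerate}
\item construct projectors for the conic, $\zeta$-invariant lifts of the ball and of the unbounded cylinder;
\item compute their $\Z$-graded endomorphisms;
\item prove $K^{eq}_t\circ P_U\circ (K^{eq}_t)^{-1}\simeq P_{\phi(U)}$ using only $\Z$-equivariance.
\end{enumerate}
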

	
	This result is to be considered in light of \cite{Eliashberg2005ErratumT}, in which the authors prove a similar result in contact geometry. It should be noted that the techniques used in this present paper are not sheaf-theoretical adaptations of the techniques involving generating functions presented in any of the aforementioned papers. Instead, the proofs presented here will rely on the ideas found in \cite{Guillermou2012SQHIQNP}, \cite{Guillermou2019SheavesAS}, and \cite{currier2025morsenovikovhomologybetacriticalpoints}. Two objects are at the core of this paper: the asymptotic Betti number of a sheaf, and the equivariant homogeneous lift of a ($\lcs$) Hamiltonian isotopy.
	
	\begin{Conv} Throughout the rest of this paper, we will use the following conventions:
		\begin{enumerate}
			\item $X$ will be a connected manifold and, given any manifold $X$, $\beta\in\Omega^1(X)$ will be a closed form on $X$.
			\item $\tilde{X}$ will be the integral cover of $\beta$, $\pi:\tilde{X}\rightarrow X$ will be the canonical  projection, and $\mathcal{H}$ will be the group of automorphisms of $\tilde{X}$. Note that $\mathcal{H}$ is a free $\mathbb{Z}$-module, which we will assume to be of finite dimension.
			\item $(\alpha_1,\ldots,\alpha_r)$ is a basis of $\mathcal{H}$ such that, for all $i$ $<[\beta],\alpha_i>>0$ when viewing $\alpha_i$ as an element of $H_1(X,\mathbb{Z})$.
			\item $\mathcal{H}_k=\left\{\sum_i \lambda_i\alpha_i\,:\,\lambda_i\in\mathbb{Z},|\lambda_i|\leq k\right\}.$ is the hypercube of side length $(2k+1)$ centered on $0$ in $\mathcal{H}$.\end{enumerate}
	\end{Conv}  
	
	As suggested by those conventions, we will mostly be working on the integral cover of $\beta$. Indeed, asymptotic Betti numbers need to be considered with respect to some $\beta$.
	
	\begin{Def}[\textbf{asymptotic Betti number}]\label{defNovikov}
		Take $V$ the closure of a connected fundamental domain (with corners) such that the pullback of  $\beta$ on $\tilde{X}$ (also called $\beta$) is never in the cotangent of $\partial V$, in the sense that around any point of $\partial V-\partial\partial V$, there is a locally defined vector field $Y$ that is normal to $\partial V$ for some Riemannian metric, and such that $\beta(Y)\neq0$. Assume that $V$ is subanalytic. Consider $W_k=\bigcup_{\alpha\in\mathcal{H}_k}\alpha(V)$. Notice how, ignoring the corners, $\beta$ enters along a part of the boundary $W_k$ (called $\partial_- W_k$) and exits on the other. Then, for $A=\Z$ or $A$ a field, 
		and for $F\in D^{b}(A_{X})$,
		\[c_j(F):=\underset{k\rightarrow+\infty}{\limsup}\;\frac{b_j\left((\pi^{-1}F)_{W_k}\right)}{(2k+1)^r}\]
		is called the $j$-th asymptotic Betti number of $F$ with respect to $\beta$.
	\end{Def}
	\begin{Pro}\label{prop1}
		Let $F\in D^{b}(A_{X})$ be a $\R$-constructible sheaf with compact support. Then the asymptotic Betti numbers of $F$ do not depend on the arbitrary choices made during the selection of $V$.
	\end{Pro}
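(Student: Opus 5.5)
The plan is to compare two admissible choices $V$ and $V'$ of fundamental domain and show that the corresponding Betti numbers $b_j((\pi^{-1}F)_{W_k})$ and $b_j((\pi^{-1}F)_{W'_{k}})$ differ by $o(k^r)$. The key structural fact is that $W_k$ and $W'_k$ are nested up to a bounded shift. Since $V$ and $V'$ are compact (as $X$ is closed, or at least since $\operatorname{supp}F$ is compact, so only a compact region of the fundamental domain matters), there is $m\in\N$ such that
\[V'\subset \bigcup_{\alpha\in\mathcal H_m}\alpha(V)\quad\text{and}\quad V\subset\bigcup_{\alpha\in\mathcal H_m}\alpha(V').\]
Hence $W_{k-m}\subset W'_k\subset W_{k+m}$ for all large $k$.

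The strategy is to write the difference between $W_{k+m}$ and $W_{k-m}$ as a "collar" made of at most $(2k+2m+1)^r-(2k-2m+1)^r=O(k^{r-1})$ translates of $V$. We then bound the cohomology contributed by this collar. Write $G=\pi^{-1}F$.

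\begin{enumerate}
\item Each translate $\alpha(V)$ carries the sheaf $G_{\alpha(V)}\cong\alpha_*(G_V)$, using $\mathcal H$-equivariance of $G$. Since $F$ is $\R$-constructible with compact support and $V$ is a compact subanalytic set, $R\Gamma(\tilde X;G_{\alpha(V)})$ has finite total dimension $N$, independent of $\alpha$.
\item The collar is a finite union of such translates. The same holds for all their intersections (faces and corners of translates), which are also compact subanalytic sets, and only boundedly many types occur up to translation. Using Mayer--Vietoris, or more cleanly the excision triangle
\[G_{W'_k\setminus W_{k-m}}\to G_{W'_k}\to G_{W_{k-m}}\xrightarrow{+1},\]
with the closed sets handled via $\R$-constructibility, I would bound $b_j(G_{W'_k\setminus W_{k-m}})\le C\cdot\#\{\text{cells in the collar}\}=O(k^{r-1})$.
\item The long exact sequence then gives $|b_j(G_{W'_k})-b_j(G_{W_{k-m}})|=O(k^{r-1})$, and similarly $|b_j(G_{W_{k+m}})-b_j(G_{W'_k})|=O(k^{r-1})$. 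Dividing by $(2k+1)^r$, and noting that $(2k\pm 2m+1)^r/(2k+1)^r\to1$, shows that the $\limsup$ computed with $V'$ equals the one computed with $V$.
\end{enumerate}

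The main obstacle is step 2. The set $W'_k\setminus W_{k-m}$ is locally closed rather than a clean union of closed cells, so the subadditivity of Betti numbers has to be organized carefully. I would first try a Čech/Mayer--Vietoris spectral sequence for the closed cover of $\overline{W'_k\setminus W_{k-m}}$ by translates of $V$ and of $V'$, intersected with the relevant regions. Its $E_1$ page has total rank at most a constant times the number of cells, because the nerve has bounded local complexity: each translate meets only boundedly many others. Cohomology with $A=\Z$ needs slightly more care, since Betti numbers are ranks. This is still fine, because ranks are subadditive in exact sequences of finitely generated groups.

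Note that the transversality condition on $\beta$ along $\partial V$ plays no role in this independence argument. It is needed only later, for invariance under quantized isotopies.
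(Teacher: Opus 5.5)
Your argument is correct and is essentially the paper's. In both, the discrepancy between the two exhaustions is confined to a boundary shell of $O\big((2k+1)^{r-1}\big)$ translates, and the cohomology of that shell is bounded uniformly using the deck-invariance of $\pi^{-1}F$ and $\R$-constructibility on compact subanalytic pieces.

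The differences are only in packaging:
\begin{itemize}
\item You sandwich $W_{k-m}\subset W'_k\subset W_{k+m}$ directly and absorb the index shift through $(2k-2m+1)^r/(2k+1)^r\to 1$. The paper instead uses a one-step nesting and then chains intermediate fundamental domains.
\item You use the closed excision triangle together with a \v{C}ech resolution of $G_Z$ by the pieces $Z\cap\alpha(V)$. The paper instead uses an open Mayer--Vietoris sequence over a neighborhood $V^k$ of the collar. For your version, the cover should be of the locally closed set $Z=W'_k\setminus W_{k-m}$ itself, not of its closure as your wording suggests, so that the resolution computes $G_Z$.
\end{itemize}
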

	It will be convenient to fix a good fundamental domain satisfying the properties given in the previous definition. The construction of such a fundamental domain can be found in \cite{Farber2004TopologyOC} and \cite{currier2025morsenovikovhomologybetacriticalpoints}.
	
	The proof of theorem \ref{thm1} will have two main ingredients. First will be proposition \ref{MorseFaisceaux}, which is a fairly straightforward adaptation of the classical microlocal Morse lemma and of the arguments found in \cite{currier2025morsenovikovhomologybetacriticalpoints}. The second ingredient will be the following technical theorem. In short, it states that Hamiltonian isotopies (of $\lcs$ type) in cotangent bundles are, in a sense, quantizable, and that their quantization behaves well with respect to the lift of sheaves to the integral cover of $\beta$. This quantization will be central to the proof of theorem \ref{thm3}.
	
	\begin{The}\label{thm2} For any manifold $X$, the cotangent bundle of $X$ minus the $0$-section will be written $\dot{T}^*X$.
		
		Let $M$ be a closed connected  manifold, $\beta\in\Omega^1(M)$ be closed, and $\lambda$ the canonical Liouville form on $T^*M$. We define $\beta'$ as the pullback of $\beta $ to $M\times\R$. Call $\tilde{M}$ the integral cover of $\beta$, and take $g$ a primitive of $\beta$ on $\tilde{M}$. Take $\pi:\tilde{M}\rightarrow M$ the canonical projection and $\pi_g(x,\xi):=D\pi(x,e^{g}\xi)$ where $D\pi:T^*\tilde{M}\rightarrow T^*M$. 
		
		Given $\Phi$ a Hamiltonian isotopy of $(T^*M,\lambda,\beta)$ with compact support, and $\tilde{\Phi}$ its lift to $T^*\tilde{M}$ by $\pi_g$, then there is a map $ \rho_{eq}: T^*\tilde{M}\times\dot{T}^*\mathbb{R}\rightarrow  T^*\tilde{M}$ and $\Phi^{eq}$ a homogeneous Hamiltonian isotopy of $(\dot{T}^*(\tilde{M}\times\R),\lambda_{\tilde{M}\times\R},0)$, for $\lambda_{\tilde{M}\times\R}$ the canonical Liouville form, such that:
		\begin{enumerate}
			\item[i)] When restricting $\Phi^{eq}$ to $T^*\tilde{M}\times\dot{T}^*\mathbb{R}$ the following commutative diagram is commutative: \[\begin{array}{ccc}
				T^*\tilde{M}\times\dot{T}^*\mathbb{R}\times I & \overset{\Phi^{eq}_t}{\rightarrow}& T^*\tilde{M}\times\dot{T}^*\mathbb{R}\\
				\rho_{eq}\times id\downarrow& \circlearrowleft&\downarrow\rho_{eq}\\
				T^*\tilde{M}\times I&\overset{\tilde{\Phi}_t}{\rightarrow}&T^*\tilde{M}
			\end{array}\]
			\item[ii)] $\Phi^{eq}$ can be quantized (in the classical sense) by a sheaf $K^{eq}\in D^{lb}(A_{\tilde{M}\times\R\times\tilde{M}\times\R\times I})$ such that for any $\R$-constructible sheaf $F\in D^b(A_{M\times\R})$ with compact support, there is an $\R$-constructible sheaf $G\in D^b(A_{M\times\R})$ with compact support such that : \[K^{eq}\circ\big( (\pi\times id_\R)^{-1}F\big)\simeq(\pi\times id_\R\times id_\R)^{-1}G; \]
			and for any $t$: \[K^{eq}_t\circ\big( (\pi\times id_\R)^{-1}F\big)\simeq(\pi\times id_\R)^{-1}G_t; \]
			\item[iii)] for any $s,t\in I$, any $j\in\N$, and any $\R$-constructible sheaf\newline $F\in D^b(A_{M\times\R})$ with compact support, if $A$ is a field, and $G$ is as defined above, then \[c_j(G_{s})=c_j(G_{t}).\]
		\end{enumerate}
	\end{The}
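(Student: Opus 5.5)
The plan is to reduce the $\lcs$ isotopy to an honest homogeneous symplectic isotopy on the cover, apply the Guillermou--Kashiwara--Schapira (GKS) quantization there, and use uniqueness to obtain $\mathcal{H}$-equivariance. The proof splits into three parts: building $\Phi^{eq}$ and $\rho_{eq}$, equivariance of $K^{eq}$ giving the descent in ii), and invariance of $c_j$ giving iii).

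\textbf{Construction of $\Phi^{eq}$ and $\rho_{eq}$.} On $T^*\tilde M$ the lift $\tilde\Phi$ is generated by $\tilde H_t=e^{g}(H_t\circ\pi_g)$, and $\tilde\Phi$ preserves $d\lambda_{\tilde M}$. The $\lcs$ condition pulls back under $\pi_g$ to exact symplecticity, since $\pi_g^*\lambda=e^{g}\lambda_{\tilde M}$ up to the conformal factor, and one checks $\pi_g^*(d_\beta\lambda)=e^g d\lambda_{\tilde M}$. Moreover $\tilde\Phi$ commutes with the deck action twisted by rescaling: for $\alpha\in\mathcal{H}$, $\alpha^*g=g+c_\alpha$, so $\tilde\Phi\circ\sigma_\alpha=\sigma_\alpha\circ\tilde\Phi$, where $\sigma_\alpha(x,\xi)=(\alpha x,e^{-c_\alpha}\xi)$.
\begin{itemize}
\item Homogenize as in GKS. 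Set $\rho_{eq}(x,\xi,s,\tau)=(x,\xi/\tau)$ on $\tau>0$, and define $\Phi^{eq}$ as the homogeneous Hamiltonian flow of $\tau\tilde H_t(x,\xi/\tau)$, with the usual $s$-component correcting the action.
\item Commutativity of the diagram in i) is then by construction.
\item On $\tau<0$, and near $\tau=0$, extend by a cutoff. The compact support of $H$ makes $\tilde H$ supported in $\pi^{-1}$(compact), which is the concern discussed next.
\end{itemize}

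\textbf{Quantization and descent.} The main obstacle is that $\tilde H$ is not compactly supported on the noncompact $\tilde M$: its support is only $\mathcal{H}$-cocompact, and it is unbounded in the fibres after rescaling by $e^{-c_\alpha}$. I would first check that the GKS construction only needs the isotopy to be locally (in $\tilde M\times\R$) compactly controlled. This holds because the Hamiltonian vector field of $\tau\tilde H(x,\xi/\tau)$ has projection to the base bounded on each $\mathcal{H}$-translate, so the sheaf $K^{eq}\in D^{lb}$ exists and is unique.

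To get equivariance, I would lift the deck transformation to $\tilde M\times\R$ homogeneously as $\hat\alpha(x,s)=(\alpha x,e^{c_\alpha}s)$, so that it commutes with $\Phi^{eq}$ after the $\tau$-rescaling. If the rescaling of $s$ turns out not to be needed, I would use the plain $\alpha\times\mathrm{id}$ instead. The uniqueness part of GKS then gives $(\hat\alpha\times\hat\alpha)^{-1}K^{eq}\simeq K^{eq}$ compatibly in $\alpha$. Consequently, for $F$ with compact support, $K^{eq}\circ(\pi\times\mathrm{id})^{-1}F$ is $\mathcal{H}$-equivariant and therefore descends to some $G$ on $M\times\R\times I$. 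Its $\R$-constructibility and compact support follow from microsupport estimates, since $SS(G)$ is the image of $SS(F)$ under the compactly supported flow.

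\textbf{Invariance of asymptotic Betti numbers.} For iii), I would compare $b_j$ on $W_k$ for $G_s$ and $G_t$. By the GKS-type result that $K^{eq}_t\circ$ is an equivalence with inverse $K^{eq,-1}_t$, the cohomology of $(\pi^{-1}G_t)$ over $W_k$ agrees with that of $(\pi^{-1}G_s)$ over a set sandwiched between $W_{k-c}$ and $W_{k+c}$. Here $c$ bounds the base displacement of the isotopy, which is uniform by cocompactness. This gives
\[|b_j(\pi^{-1}G_t)_{W_k}-b_j(\pi^{-1}G_s)_{W_k}|\le C\bigl((2k+2c+1)^r-(2k-2c+1)^r\bigr)=O(k^{r-1}),\]
using a Mayer--Vietoris estimate on the boundary layer, where each fundamental domain contributes bounded Betti numbers by constructibility (and with $A$ a field). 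Dividing by $(2k+1)^r$ yields equality of the $\limsup$s, and Proposition \ref{prop1} handles the choice of $V$. The delicate point is the sandwich argument, which I would carry out via the microlocal Morse lemma applied to cutoffs along $\partial W_k$, on which $\beta$ is transverse.
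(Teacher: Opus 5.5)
\textbf{The gap is in the descent step of ii), and it comes from your choice of homogenization.} Your map $(x,\xi,s,\tau)\mapsto(x,\xi/\tau)$ and your Hamiltonian $\tau\tilde H_t(x,\xi/\tau)$ are the classical lift $\rho_{cl}$, $\Phi^{cl}$. Part i) holds for them as literally stated, but ii) fails. (Incidentally, $\tilde H_t=e^{-g}(H_t\circ\pi_g)$, not $e^{g}(H_t\circ\pi_g)$: your own identity $\pi_g^*(d_\beta\lambda)=e^{g}d\lambda_{\tilde M}$ gives $\iota_{\tilde X}d\lambda_{\tilde M}=-d(e^{-g}H_t\circ\pi_g)$.)

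Set $c_\alpha=g\circ\alpha-g$. Then $\tilde H_t\circ\sigma_\alpha=e^{-c_\alpha}\tilde H_t$, hence $\tau\tilde H_t(\alpha x,\xi/\tau)=e^{-c_\alpha}\tau\tilde H_t(x,e^{c_\alpha}\xi/\tau)$. So the classical lift does not commute with the plain action $\alpha\times\mathrm{id}_\R$ (as the paper notes), and your fallback is not available. It commutes only with the twisted action $\hat\alpha(x,s)=(\alpha x,e^{-c_\alpha}s)$, and GKS uniqueness gives invariance of $K$ under $\hat\alpha\times\hat\alpha\times\mathrm{id}_I$.

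However, $(\pi\times\mathrm{id}_\R)^{-1}F$ is invariant under $\alpha\times\mathrm{id}_\R$, not under $\hat\alpha$. Indeed, $\hat\alpha^{-1}(\pi\times\mathrm{id}_\R)^{-1}F$ is the pullback of $F$ rescaled by $e^{-c_\alpha}$ in $s$. For a general compactly supported $F$ this differs from $F$; it is harmless only when $\mathrm{supp}(F)\subset M\times\{0\}$, as in the Chantraine--Murphy application. Consequently $K\circ(\pi\times\mathrm{id}_\R)^{-1}F$ is invariant under neither action and does not descend along $\pi\times\mathrm{id}_\R\times\mathrm{id}_I$. Part iii) then has nothing to compare, since $c_j(G_t)$ is defined through the fundamental domains $W_k\times\R$ of the plain action.

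\textbf{The missing idea is to untwist.} Conjugate by $L(x,\xi,s,\sigma)=(x,\xi+s\sigma\,dg,e^{-g}s,e^{g}\sigma)$, the cotangent lift of $(x,s)\mapsto(x,e^{-g(x)}s)$. It is exact and intertwines $T^*(\alpha\times\mathrm{id}_\R)$ with $T^*\hat\alpha$. Setting $\rho_{eq}=\rho_{cl}\circ L$, i.e. $\rho_{eq}(x,\xi,s,\sigma)=(x,e^{-g}(\xi/\sigma+s\,dg))$, you get $\pi_g\circ\rho_{eq}(x,\xi,s,\sigma)=(\pi(x),\xi/\sigma+s\beta)$, in which $g$ no longer appears. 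Therefore:
\begin{itemize}
\item $f^{eq}_t=\sigma\,(\pi_g\circ\rho_{eq})^*f_t$ is invariant under plain deck transformations;
\item $\Phi^{eq}=L^{-1}\circ\Phi^{cl}\circ L$ commutes with them;
\item uniqueness yields $((\alpha\times\mathrm{id})\times(\alpha\times\mathrm{id})\times\mathrm{id}_I)^{-1}K^{eq}\simeq K^{eq}$, which is exactly what descent along $\pi\times\mathrm{id}_\R\times\mathrm{id}_I$ requires.
\end{itemize}
Keeping $\Phi^{cl}$ and instead pulling $F$ back along $(x,s)\mapsto(\pi(x),e^{g(x)}s)$, which identifies the $\hat\alpha$-quotient with $M\times\R$, is the same fix in other coordinates. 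It is not, however, the statement with $(\pi\times\mathrm{id}_\R)^{-1}F$.

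\textbf{On iii).} Once ii) is repaired, your propagation route differs genuinely from the paper's. The paper never compares $G_s$ and $G_t$ over moving regions. It uses that $t$ is non-characteristic for $K^{eq}\circ F$ (Proposition \ref{PropCompositionKernel}). Hence, on the cut-offs of $K^{eq}\circ F$ to $W_k\times\R\times\{s<\cdot\le t\}$ and $W_k\times\R\times\{s\le\cdot<t\}$, the microlocal Morse lemma only sees contributions over $\partial W_k\times\R$, of order $(2k+1)^{r-1}$. Exact triangles then compare the slab with the slices at $s$ and $t$.

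Your sandwich claim is not correct as stated, because convolution does not carry sections over $W_k$ to sections over another region. What does hold, and would make your argument work, is the following. Let $K'=K^{eq}_t\circ(K^{eq}_s)^{-1}$ and $H=(\pi\times\mathrm{id}_\R)^{-1}G_s$.
\begin{itemize}
\item $K'$ is supported within bounded $\tilde M$-distance of the diagonal.
\item $R\Gamma(K'\circ(H_{W_k\times\R}))\simeq R\Gamma(H_{W_k\times\R})$.
\item $(K'\circ H)_{W_k\times\R}$ and $K'\circ(H_{W_k\times\R})$ differ only over a shell of $O((2k+1)^{r-1})$ fundamental domains along $\partial W_k$, with boundedly many local types.
\end{itemize}
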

	
	The outline of this paper is as follows. Section \ref{towardsasheaf} will be mainly concerned with preliminary results. In this section, we will prove proposition \ref{prop1}, thus showing that asymptotic Betti numbers are well-defined for $\R$-constructible sheaves with compact support. The section will then conclude with the proof of a version of the microlocal Morse lemma adapted to asymptotic Betti numbers. Section \ref{onlcsgeometry} will focus on providing the reader with a short overview of the various objects in $\lcs$ geometry that will be studied in this paper. In section \ref{sectionLifts}, we will see various methods of transforming the various $\lcs$ objects into their symplectic counterparts. This section will contain a proof of theorem \ref{thm2}. Section \ref{sectionChantraineMurphy} will be concerned with a sheaf-theoretical proof of theorem \ref{thm1}. Finally, after a brief reminder of the standard sheaf-theoretical proof of the symplectic non-squeezing theorem in section \ref{non-squeezingsymp}, we will conclude with a short proof of theorem \ref{thm3} in section \ref{non-squeezing}.

	\subsection*{Acknowledgments}
	
	The author would like to thank Baptiste Chantraine for his feedback. The author especially thanks St\'ephane Guillermou for helpful comments and discussions.
	
		\section{Equivariant sheaves and asymptotic Betti numbers}\label{towardsasheaf}
		
 	We use the notations of \ref{prop1}. Call $D^b_c(A_X)$ the subcategory of $D^b(A_X)$ of $\R$-constructible sheaves with compact support. Then $\pi^{-1}(D^b_c(A_X))$ is the subcategory of $D^b(A_{\tilde{X}})$ whose objects are sheaves $F$ such that:
 	\begin{enumerate}
 		\item[1)] for any deck transformation $\alpha$, we have $\alpha^*F\simeq F$;
 		\item[2)] $\pi(supp(F))$ is compact;
 		\item[3)] $F$ is $\R$-constructible.
 	\end{enumerate}
	
	\begin{Conv}
	 As a short-hand, any sheaf satisfying the first point will be said to be invariant under deck transformations. Any sheaf satisfying the three points will be called a $\beta$-sheaf.
	\end{Conv}
	
	\subsection{Proof of proposition \ref{prop1}}
	Keeping in mind the definitions given in the introduction, let us begin our exposition of a sheaf theory for $\lcs$ geometry by proving proposition \ref{prop1}. 
	
	\begin{Lem}
		Let $F$ be $\beta$-sheaf, and $(W^1_k)_k$ and $(W^2_k)_k$ be two compact exhaustions as in definition \ref{defNovikov}. Then \[b_j(F_{W^1_{k}})=b_j(F_{W^2_{k}})+O_{+\infty}\left((2k+1)^{r-1}\right).\]
	\end{Lem}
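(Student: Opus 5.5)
The plan is to compare both exhaustions against a common subanalytic decomposition of $\tilde{X}$ that is invariant under $\mathcal{H}$. I then show that $F_{W^1_k}$ and $F_{W^2_k}$ differ by a piece supported on a "shell" made of $O((2k+1)^{r-1})$ translates of finitely many compact cells. Write $V_1,V_2$ for the two fundamental domains, so that $W^i_k=\bigcup_{\alpha\in\mathcal{H}_k}\alpha(V_i)$.

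\emph{Step 1: nesting.} Since $V_1,V_2$ are compact, there is an $m\in\N$ with $V_2\subset W^1_m$ and $V_1\subset W^2_m$. Hence $W^1_{k-m}\subset W^2_k\subset W^1_{k+m}$ for all $k\geq m$, and symmetrically with the indices $1,2$ exchanged.

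\emph{Step 2: common invariant cell decomposition.} The family $\{\alpha(\partial V_1),\alpha(\partial V_2)\}_{\alpha\in\mathcal{H}}$ is locally finite and subanalytic. Together with an $\mathcal{H}$-invariant $\mathbb{R}$-stratification adapted to $F=\pi^{-1}F_0$, obtained by lifting one of $F_0$ on $X$, it gives an $\mathcal{H}$-invariant subanalytic decomposition of $\tilde{X}$. This decomposition has finitely many locally closed, relatively compact cells $C_1,\dots,C_p$ up to deck translation. Each $W^i_k$ is a union of cells. Since $F_{C}$ is $\mathbb{R}$-constructible with compact support, each $R\Gamma(\tilde{X};F_{\gamma(C_l)})$ has finite total Betti number. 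By deck invariance this number equals $B_l:=\sum_j b_j(F_{C_l})$, independent of $\gamma$.

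\emph{Step 3: the shell estimate.} For a closed union of cells $W\subset W'$, the set $W'\setminus W$ is locally closed and we have the triangle $F_{W'\setminus W}\to F_{W'}\to F_W\xrightarrow{+1}$. From the long exact sequence,
\[|b_j(F_{W'})-b_j(F_W)|\leq b_j(F_{W'\setminus W})+b_{j-1}(F_{W'\setminus W}).\]
Apply this to the pairs $W^2_k\subset W^1_{k+m}$ and $W^1_k\subset W^1_{k+m}$, and compare through $W^1_{k+m}$. It then suffices to bound the total Betti number of $F$ on $W^1_{k+m}\setminus W^2_k$ and on $W^1_{k+m}\setminus W^1_k$. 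Both sets lie in $W^1_{k+m}\setminus W^1_{k-m}$, which is a union of cells contained in translates $\alpha(V_1)$ with $\alpha\in\mathcal{H}_{k+m}\setminus\mathcal{H}_{k-m}$. There are at most $(2k+2m+1)^r-(2k-2m+1)^r=O((2k+1)^{r-1})$ such $\alpha$, each contributing at most a fixed number $N$ of cells.

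Filtering the locally closed shell by its cells and applying the same triangle inequality repeatedly, one cell at a time, bounds its total Betti number by $\sum(\text{cells})B_l\leq N\max_l B_l\cdot O((2k+1)^{r-1})$. The cells are ordered by dimension, so that each successive union is closed in the previous one. This gives the claim.

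I expect the main obstacle to be the bookkeeping in Step 3. The difference sets are only locally closed, so the cell filtration must be chosen so that each partial union is closed (or open) in the next; ordering cells by increasing dimension, or using closures of strata, should handle this. I also need to ensure finiteness of each $B_l$, which is where $\mathbb{R}$-constructibility and compactness of $\pi(\operatorname{supp}F)$ enter. If the cell approach becomes messy, my fallback is to compare via the closed sets $W^1_{k\pm m}$ only, with Mayer–Vietoris over the boundary layer.
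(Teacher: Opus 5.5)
Your proof is correct, and it follows a somewhat different route from the paper's. Both arguments rest on the same mechanism: the two Betti numbers differ only through a shell made of $O((2k+1)^{r-1})$ translates, and deck invariance together with $\mathbb{R}$-constructibility gives each translate a uniformly bounded contribution.

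The paper proceeds as follows:
\begin{itemize}
\item It first assumes one-step nesting, $W^1_k\subset\operatorname{int}(W^2_{k+1})$ and $W^2_k\subset\operatorname{int}(W^1_{k+1})$.
\item It covers $W^1_{k+1}$ by $\operatorname{int}(W^2_k)$ and an open neighbourhood $V^k$ of the shell, then compares the Mayer--Vietoris sequences of $F_{W^1_{k+1}}$ and $F_{W^2_k}$, whose terms agree away from $V^k$.
\item It bounds the groups over $V^k$ by iterated Mayer--Vietoris over translates $\alpha(O_0)$, $\alpha\in\mathcal{H}_{k+1}\setminus\mathcal{H}_k$, of a single open set.
\item It reaches the general case by chaining intermediate fundamental domains.
\end{itemize}

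You get the general case at once from the uniform shift $W^1_{k-m}\subset W^2_k\subset W^1_{k+m}$. You use the triangle $F_{W'\setminus W}\to F_{W'}\to F_W\xrightarrow{+1}$ instead of an open cover, and you bound the locally closed shell additively over cells. This costs you a global invariant cell decomposition. In exchange, the count is purely additive with an explicit constant $N\max_l B_l$, and you avoid controlling the number of non-empty multiple intersections of the $\alpha(O_0)$, which the paper uses only implicitly.

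To make Step 2 airtight, triangulate downstairs rather than refining upstairs:
\begin{itemize}
\item Take a subanalytic triangulation of $X$ compatible with $\pi(\partial V_1)$ and $\pi(\partial V_2)$, and lift its open simplices.
\item Being simply connected, they lift to relatively compact cells. A stratum containing a loop with nonzero $\beta$-period would instead lift to a non-relatively-compact set.
\item Each lifted cell is connected, so it lies either in a single $\alpha(\operatorname{int}V_i)$ or in $\pi^{-1}\pi(\partial V_i)$.
\item Since $\alpha(V_i)=\overline{\alpha(\operatorname{int}V_i)}$, every $W^i_k$ is then a union of cells.
\end{itemize}

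Two minor points remain. First, the connecting map is $H^j(F_W)\to H^{j+1}(F_{W'\setminus W})$, so your inequality should involve $b_{j+1}$, not $b_{j-1}$; this is harmless, since you only use total Betti numbers. Second, Step 1 and the finiteness of $N$ use compactness of $V_1,V_2$. For non-compact $X$, such as $\tilde M\times\mathbb{R}$ later in the paper, you would first need the paper's localization near $\operatorname{supp}F$, for instance via $F_W\simeq F_{W\cap\operatorname{supp}F}$.
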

	
	\begin{proof} Given the assumptions, we may as well assume that $X$ is relatively compact. To see why, one may simply take the restriction to a neighborhood of the projection of the support of $F$, together with neighborhoods of the loops that form a (finite) basis for the coimage of 
		\begin{align*}
			<[\beta],\cdot>:&\pi_1(X)\rightarrow \R\\
			\gamma&\mapsto\int_\gamma\beta
		\end{align*}
		
		Let $W_0^1$ and $W_0^2$ be two possible choices for the fundamental domain $V$. Since $F$ is invariant under deck transformations, we can assume that $int(W_0^1)\cap int(W_0^2)\neq\varnothing$ without loss of generality. Define $W^1_k=\bigcup_{\alpha\in\mathcal{H}_k}\alpha(W_0^1)$ and $W^2_k=\bigcup_{\alpha\in\mathcal{H}_k}\alpha(W_0^2)$. Since both $W_0^1$ and $W_0^2$ are subanalytic, so is $W^1_k$ and $W^2_K$. This implies, in turn, that 
		$F_{W^1_k}$ and $F_{W^2_K}$ are also $\R$-constructible.\newline
		
		First, let us assume that $W^1_k\subset int (W^2_{k+1})$ and $W^2_k\subset int(W^1_{k+1})$. To tidy up the redaction, take $F_k^1:=F_{W^1_k}$ and $F_k^2:=F_{W^2_k}$. Take $V^k$ some arbitrarily small open neighborhood of $(W^1_{k+1}-W^2_{k})$. Since for any neighborhood $U$ of $W^1_{k+1}$, we have $R\Gamma(U,F_{W^1_k})\simeq R\Gamma(W^1_{k+1},F_{W^1_k})$,  we have the Mayer-Vietoris long exact sequence:
		\begin{align*}
			\ldots\rightarrow R\Gamma(W^1_{k+1},F_{W^1_{k+1}})&\rightarrow R\Gamma(V^k,F_{W^1_{k+1}})\oplus R\Gamma(int(W^2_{k}),F_{W^1_{k+1}})\\
			&\rightarrow R\Gamma(V^k\cap int (W^2_{k}),F_{W^1_{k+1}})\overset{+1}{\rightarrow}\ldots
		\end{align*}
		
		And, similarly, we have
		\begin{align*}
			\ldots\rightarrow R\Gamma(W^1_{k+1},F_{W^2_{k}})&\rightarrow R\Gamma(V^k,F_{W^2_{k}})\oplus R\Gamma(int(W^2_{k}),F_{W^2_{k}})\\
			&\rightarrow R\Gamma(V^k\cap int(W^2_{k}),F_{W^2_{k}})\overset{+1}{\rightarrow}\ldots
		\end{align*}
		However, we have the obvious isomorphisms \[R\Gamma(int(W^2_{k}),F_{W^1_{k+1}})\simeq R\Gamma(int(W^2_{k}),F_{W^2_{k}})\] and \[R\Gamma(V^k\cap int(W^2_{k}),F_{W^1_{k+1}})\simeq R\Gamma(V^k\cap int(W^2_{k}),F_{W^2_{k}}).\] 
		
		Therefore, we only have to show two things:
		\begin{enumerate}
		\item $b_j \left(F_{W^1_{k+1}}\right)=b_j \left(F_{W^1_{k}}\right)+O_{+\infty}\left((2k+1)^r\right)$
		\item $rank \left(H^j(V^k,F_{W^1_{k}})\right)=rank \left(H^j(V^k,F_{W^2_{k}})\right)+O_{+\infty}\left((2k+1)^{r-1}\right)$
		\end{enumerate}
		
		 More precisely, we are going to show the second point by proving that those cohomology groups are both of rank at most $O_{+\infty}((2k+1)^{r-1})$.
		
		For $H^j(V^k,F_{W^1_{k+1}})$, first take a finite open cover $(O^k_i)_{i\in I_k}$ of $V^k$ where all $O^k_i$ are subanalytic and are deck transformations of one another. More precisely, we can find an open relatively compact subanalytic set $O_0$ such that $\cup_{\alpha\in\mathcal{H}_{k+1}\backslash\mathcal{H}_{k}}\alpha(O_0)\supset V^k$. Considering the subsets $\cup_{\alpha\in J}\alpha(O_0)\subset\cup_{\alpha\in\mathcal{H}_{k+1}\backslash\mathcal{H}_{k}}\alpha(O_0)$ for any $J\subset\mathcal{H}_{k+1}\backslash\mathcal{H}_{k}$, one can build a sequence of Mayer-Vietoris sequences yielding that \[rank \left(H^j(\cup_{\alpha\in\mathcal{H}_{k+1}\backslash\mathcal{H}_{k}}\alpha(O_0),F_{W^1_{k+1}}))\right)\leq cst\times(2(k+1)+1)^{r-1}).\] Indeed, $F_{W^1_{k+1}}$ is $\R$-constructible and $F$ invariant under deck transformation, implying that $H^j(\bigcap_{\alpha\in J}\alpha(O_0),F_{W^1_{k+1}})\leq cst$ for any $J\subset\mathcal{H}_{k+1}\backslash\mathcal{H}_{k}$ and some constant $cst$ not depending on $J$. Moreover, the number of elements in $\mathcal{H}_{k+1}\backslash\mathcal{H}_{k}$ is proportional to $(2(k+1)+1)^{r-1})$.
		
		Also, note that the constant $cst$ can be defined independently of $k$ since, if for some $k_1,k_2\in\N$ and some $\alpha,\alpha'\in\mathcal{H}$, the subsets $\alpha(O_0)\cap W^1_{k_1}$ and $\alpha'(O_0)\cap W^1_{k_2}$ are deck transformations of one another, then $R\Gamma(\alpha(O_0),F_{k_1}^1)\simeq R\Gamma(\alpha'(O_0),F_{k_2}^1)$ (by invariance under deck transformation). Moreover, we have similar results for finite intersections of sets in $\big(\alpha(O_0)\big)_{\alpha\in\mathcal{H}_{k+1}\backslash\mathcal{H}_{k}}$. Therefore, since we may assume that $\cup_{\alpha\in\mathcal{H}_{k+1}\backslash\mathcal{H}_{k}}\alpha(O_0)= V^k$, \[rank(H^j(V^k,F_{W^1_{k+1}}))\leq cst\times(2(k+1)+1)^{r-1}).\]
		This strategy also yields that $rank\left(H^j(V^k,F_{W^2_{k}})\right)\leq cst\times (2(k+1)+1)^{r-1}$.

	Therefore, we have the following equality:
	\begin{enumerate}
		\item $b_j(F_{W^1_{k+1}})=b_j(F_{W^2_{k}})+O_{+\infty}((2k+1)^{r-1});$
		\item $b_j(F_{W^1_{k+1}})=b_j(F_{W^1_{k}})+O_{+\infty}((2k+1)^{r-1})$ by taking $W^1_0=W^2_0$.
	\end{enumerate} 
	This yields $b_j(F_{W^1_{k}})=b_j(F_{W^2_{k}})+O_{+\infty}((2k+1)^{r-1})$.
	
	Dropping the assumption $W^1_k\subset int (W^2_{k+1})$ and $W^2_k\subset int(W^1_{k+1})$, the result can be derived from the first case by taking a finite sequence of fundamental domains $V_i$, and the associated $(W^i_k)_k$ such that, for every $i$, $(W^i_k)_k$ and $(W^{i+1}_k)_k$ satisfy the hypothesis laid out in the previous case.
	\end{proof}
	
	\begin{Rem}
	The strategy used in this proof can be extended to prove that, for example, $b_j(F_{\partial_- W_k})=O_{+\infty}\left((2k+1)^{r-1}\right)$ for any $j$. This implies that, $b_j(F_{W_k\backslash\partial_- W_k})=b_j(F_{W_k})+O_{+\infty}\left((2k+1)^{r-1}\right)$.
	\end{Rem}
	
	In the rest of this paper, we will use the equivalent definition for $\beta$-sheaves: \[c_j(F)=\underset{k\rightarrow+\infty}{\limsup}\frac{b_j(F_{W_k\backslash \partial_- W_k})}{(2k+1)^r}.\]
	
	Considering $F_{W_k\backslash \partial_- W_k}$ instead of $F_{W_k}$ has one main advantage: the micro-support behaves ``better'' under deck transformations, and $F_{W_k\backslash \partial_- W_k}$ is still $\R$-constructible since ${W_k\backslash \partial_- W_k}$ is subanalytic.
	
	Indeed, for any locally closed set $Z$ and closed subset $Z'$, the exact sequence $\ldots\rightarrow F_{Z\backslash Z'}\rightarrow F_Z\rightarrow F_{Z'}\overset{+1}{\rightarrow}\ldots$ implies that if $(x,\xi)\in T^*\tilde{X}$ such that $x\in\partial W_0$ and $(x,\xi)\in SS(F_{W_0\backslash \partial_- W_0})-SS(F)$, then there is a deck transformation $\alpha\neq id$ such that $\alpha(x)\in \partial W_0$ and $\alpha(x,\xi)\in SS(F_{W_0\backslash \partial_- W_0})$. It should be noted that if $x\in\partial W_0-\partial\partial W_0$, then there is a unique deck transformation $\alpha\neq id$ such that $\alpha(x)\in\partial W_0$.
	
	\begin{Conv}
	In the rest of this paper, $A$ will be a commutative field $\K$, except when specified.  Moreover, $M$ will always be a connected closed manifold, $\beta$ will always be a closed $1$-form on $M$, and its various pullbacks will also be called $\beta$ when confusion is unlikely. 
	
	Moreover, for any manifold $X$, $D^{\bullet}(\K_{X})$ will be denoted by $D^{\bullet}(X)$, where the dot stands for either $lb$ or $b$.
	
	By $M$, we will always denote a closed connected manifold.
	
	Finally, if a derived sheaf $F\in D^b(\tilde{X})$ satisfies  $F=\pi^{-1}G$ for some  $\R$-constructible $G\in D^b(X)$ with compact support, then $F$ will simply be called a $\beta$-sheaf as a shorthand and we will write $c_j(F)$ instead of $c_j(\pi^{-1}G)$.
	\end{Conv}
	
	\begin{Rem}
	A derived sheaf $F\in D^b(\tilde{X})$ is a $\beta$-sheaf if and only if the sheaf is $\R$-constructible, $\pi(supp(F))$ is compact and, for any $\alpha\in\mathcal{H}$,  $\alpha^*F\simeq F$.
	\end{Rem}
	
	\subsection{Microlocal Morse lemma for asymptotic Betti numbers}\label{amorsetheory}
	In the rest of the paper, it will be convenient to fix such a $V$ and not modify it. We will now give a construction that will yield one, using the beginning of Farber's construction for his universal chain complex (see \cite{Farber2004TopologyOC}). More specifically, we are interested in his construction of a ``good'' fundamental domain in a covering space of $X$.
	
	\begin{Rem}(\cite{Farber2004TopologyOC})
		Let $\beta$ be a closed $1$-form on $X$, then there is an $r\in\mathbb{N}$ and a map \[(\psi_1,\ldots,\psi_r):M\rightarrow\mathbb{S}^1_1\times\ldots\times\mathbb{S}^1_r\] such that $[\beta]=\sum_ia_i\psi_i^{*}[d\theta_i]$ where $d\theta_i$ is the canonical generator of $H^1(\mathbb{S}^1_i)$, the $\psi_i^{*}[d\theta_i]$ are linearly independent and the $a_i$ are linearly independent over $\mathbb{Z}$. 
	\end{Rem}
	Moreover, due to Thom's transversality, we have:
	\begin{Rem}(\cite{Farber2004TopologyOC})
		For some generic $\beta$, generic elements $c_1,\ldots,c_r\in\mathbb{S}^1$, and for every $i$, the various maps $\phi_i^{-1}(\{c_i\})$ are submanifold which intersect transversely with one another.
	\end{Rem}
	These considerations allow for the statement of the following definition:
	\begin{Def}(\cite{Farber2004TopologyOC})
		Let $U=X-\bigcup_i\phi_i^{-1}(\{c_i\})$, take $V$ a preimage of $U$ in $\tilde{X}$ which is fully contained in some connected fundamental domain (with corners) and define $W_k=\cup_{\alpha\in\mathcal{H}_k}\alpha(\overline{V})$.
	\end{Def}
	
	\begin{Rem}
		If $X=T^*M$ (resp. $X=M\times\R^m$) and $\beta\in\Omega^1(M)$, then good fundamental domains used in this paper will be built as follows: first, fix a good fundamental domain $V\in \tilde{M}$, then $T^*V$ (resp. $V\times\mathbb{R}^m$) is a good fundamental domain for $\tilde{X}$.
	\end{Rem}
	
	We can now adapt proposition 5.4.20 in \cite{Kashiwara1990SheavesOM} to asymptotic Betti numbers:

	\begin{Pro}\label{MorseFaisceaux} Pick an $m\in \N$. Let $\beta\in\Omega^1(M)$ be closed, and pull it back to $M\times\R^m$ through the canonical projection.
		
		Let $F\in D^b(\tilde{M}\times\R^m)$ such that, for any $\alpha\in\mathcal{H}$, $\alpha^*F\simeq F$. Write $F_k=F_{W_k\backslash\partial_-W_k}$. Let $g$ be a primitive of $\beta$ in $\tilde{M}\times\R^m$. Take a smooth map $f:M\times\R^m\rightarrow\R$ and define 
		\begin{align*}
			\psi:\tilde{M}\times\R^m&\rightarrow\R\\
			x&\mapsto e^{-g(x)}f(x)
		\end{align*}
		
		$\Lambda_\psi$ the $0$-exact Lagrangian given by:
		$\Lambda_\psi=\{(x,d\psi_x)\;:\;q\in \tilde{M}\times\R^m\}.$
		Assume that, for any $k$, the intersection of $\Lambda_\psi$ and $SS(F_k)$ is finite. Index the projection on $\tilde{M}$ of the points $(x,\xi)\in\Lambda_\psi\cap SS(F_k)$ with $I_k$. Write $D^k_i=\{y:\psi(y)\geq\psi(x_i)\}$ for every $i\in I_k$.
		
		Assume that:
		\begin{itemize}
			\item $\{q_i\,:\,i\in I_k\}$ is finite.
			\item for all $t\in \R$ and $k\in\N$, $\{x\in \text{supp}(F_k): \psi(x\leq t)\}$ is compact.
			\item $A^k_i:=R\Gamma_{D^k_i}(F)_{|q_i}\in D^b(Mod^f(\K))$ for all $i\in I_k$ such that $q_i\in int(W_k)$.
			\item $B^k_i:=R\Gamma_{D^k_i}(F_k)_{|q_i}\in D^b(Mod^f(\K))$ for all $i\in I_k$ such that $q_i\in \partial W_k$.
			\item $\Lambda_\psi$ and $SS(F)$ do not intersect in the fibers of $T^*(\tilde{M}\times\R^m)$ over $\partial W_0=\partial\overline{V}$.
		\end{itemize}
		Then:
		\begin{itemize}
			\item $R\Gamma(\tilde{M}\times\R^m,F_k)\in D^b(Mod^f(\K))$ for any $k$.
			\item $(-1)^l\sum_{j\leq l}(-1)^jc_j(F)\leq (-1)^l\sum_{j\leq l}(-1)^j\left(\sum_ib_j(A^0_i)\right)$.
		\end{itemize}
	\end{Pro}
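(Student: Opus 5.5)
The plan is to apply the classical microlocal Morse lemma (Proposition 5.4.20 in \cite{Kashiwara1990SheavesOM}, with its Morse-inequality refinement) to each truncated sheaf $F_k=F_{W_k\backslash\partial_-W_k}$ separately, and then to pass to the limit after dividing by $(2k+1)^r$. The first step is to check that, for fixed $k$, the hypotheses of the classical statement hold for $F_k$ and $\psi$. By assumption $\Lambda_\psi\cap SS(F_k)$ is finite and $\psi$ is proper on $\text{supp}(F_k)$. The stalks of local cohomology at the critical points are finite-dimensional: they are $A^k_i$ at interior points, since $F_k=F$ near $q_i\in int(W_k)$, and $B^k_i$ at boundary points. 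The classical lemma then gives $R\Gamma(\tilde M\times\R^m,F_k)\in D^b(Mod^f(\K))$, which is the first conclusion. It also gives the strong Morse inequalities
\[(-1)^l\sum_{j\leq l}(-1)^j b_j(F_k)\leq(-1)^l\sum_{j\leq l}(-1)^j\Big(\sum_{i\in I_k,\,q_i\in int W_k}b_j(A^k_i)+\sum_{i\in I_k,\,q_i\in\partial W_k}b_j(B^k_i)\Big),\]
obtained from the filtration of $\tilde M\times\R^m$ by the sublevel sets of $-\psi$ (or of $\psi$, matching the sign convention for $D^k_i$) and the corresponding long exact sequences.

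The second step is the counting argument. Since $f$ is defined on the base and $\psi=e^{-g}f$, a deck transformation $\alpha$ changes $g$ by a constant $c_\alpha$. It therefore maps $\Lambda_\psi$ to $\Lambda_{e^{-c_\alpha}\psi}$, and the condition $d\psi_x\in SS(F)$ is invariant because $SS(F)$ is conic and $\alpha$-invariant. Consequently the interior critical points are the $\alpha$-translates, $\alpha\in\mathcal H_k$, of the critical points lying in $\overline V=W_0$, excluding those on the boundary of $W_k$. The sets $D^k_i$ only rescale under $\alpha$, because $\{\psi\geq\psi(x_i)\}$ is sent to $\{e^{-c_\alpha}\psi\geq e^{-c_\alpha}\psi(x_i)\}$, so $A^k_{\alpha(i)}\simeq A^0_i$. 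The last hypothesis, that $\Lambda_\psi\cap SS(F)$ is empty over $\partial W_0$, ensures that no critical point of $F$ sits on the walls between translates. Hence the interior contribution equals exactly $\#(\mathcal H_k\text{ minus boundary layer})\cdot\sum_i b_j(A^0_i)$, which is $(2k+1)^r\sum_i b_j(A^0_i)+O((2k+1)^{r-1})$.

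The third step, which I expect to be the main obstacle, is to show that the boundary terms $B^k_i$ contribute only $O((2k+1)^{r-1})$. The points $q_i\in\partial W_k$ are new critical points created by the truncation, namely covectors in $SS(F_k)\setminus SS(F)$ lying over $\partial W_k$. By the remark following the lemma (each point of $\partial W_0-\partial\partial W_0$ is matched with a unique other deck translate), these are local models translated from finitely many configurations on $\partial W_0$. I would argue that their number is bounded by a constant times the number of boundary cells, $\#(\mathcal H_k\backslash\mathcal H_{k-1})=O((2k+1)^{r-1})$, and that each $B^k_i$ has Betti numbers bounded uniformly in $k$ by deck invariance and $\R$-constructibility. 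This is the same argument as in the lemma proving Proposition \ref{prop1}. The delicate point is uniformity at the corners $\partial\partial W_k$, where several translates meet. There I would use the finiteness of corner types given by the transverse hypersurfaces of Farber's construction.

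Finally, I would divide the Morse inequality by $(2k+1)^r$ and take $\limsup$. The alternating partial sums need care, since $\limsup$ is not additive. To handle this, I would apply the inequality for each $l$ and use that each term $b_j(F_k)/(2k+1)^r$ is bounded. I would also use that the $c_j$ can be computed along a common subsequence, or I would read the inequality as the statement for the $\limsup$ of the alternating sum, which is what the truncated-sum form of $c_j$ implicitly means. Either way, this yields $(-1)^l\sum_{j\leq l}(-1)^jc_j(F)\leq(-1)^l\sum_{j\leq l}(-1)^j\sum_ib_j(A^0_i)$.
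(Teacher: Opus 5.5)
Your proposal follows the paper's proof essentially step by step: apply Proposition 5.4.20 of Kashiwara--Schapira to each $F_k$, identify the interior contributions with $(2k+1)^r$ deck-translates of the $A^0_i$, bound the boundary contributions by $O((2k+1)^{r-1})$, then divide by $(2k+1)^r$ and let $k\to+\infty$. Your extra details only make explicit what the paper leaves implicit: the rescaling $\psi\circ\alpha=e^{-c_\alpha}\psi$ together with conicity of $SS(F)$, and the finitely many local corner types, which with deck invariance give the uniform bound on the $B^k_i$ without needing $\R$-constructibility (not assumed here). The $\limsup$ issue you flag is real and is equally glossed over in the paper, but neither of your fixes settles it: a common subsequence realizing all the $\limsup$'s need not exist, and for $l\geq 2$ an alternating sum of $\limsup$'s is not bounded by the $\limsup$ of the alternating sum, so unless $b_j(F_k)/(2k+1)^r$ is known to converge, the per-$k$ inequality only yields the stated bound for $l=0,1$ (or for the $\limsup$ of the normalized alternating sum).
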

	\begin{Rem}
		Index the projection on $\tilde{M}$ of the points of $(x,\xi)\in\Lambda_\psi\cap SS(F)\cap T^* int(W_0)$ with the set $J$. Then for any $i\in I_0$ there is a (unique) $j\in J$ such that \[A^0_i\simeq R\Gamma_{\{\psi(y)\geq\psi(x_j)\}}(F)_{|x_j}.\] 
		
		In this equality, $F$ may be replaced with $F_{W_0}$.
	\end{Rem}
	\begin{proof}
		Since $SS(F_k)\subset T^*W_k$, we can use proposition 5.4.20 in \cite{Kashiwara1990SheavesOM} to deduce that $ R\Gamma(\tilde{M}\times\R^m,F_k)\simeq R\Gamma(W_k,F_k)\in D^b(Mod^f(\K))$ for any $k$, and that:
		\[(-1)^l\sum_{j\leq l}(-1)^jb_j(F_k)\leq (-1)^l\sum_{j\leq l}(-1)^j\left(\left[\sum_ib_j(A^k_i)\right]+\left[\sum_i b_j(B^k_i)\right]\right).\]
		Since $F$ is a $\beta$-sheaf, $b_j(\bigoplus_{i}A^k_i)=(2k+1)^rb_j(\bigoplus_{i}A^0_i)$. Indeed, for each $A^0_i$, there are exactly $(2k+1)^r$ derived chains of modules $A^k_i$ to which $A^0_i$ is isomorphic. The coefficient is equal to the volume spanned by the hypercube $\mathcal{H}_k$.
		
		Similarly, for each $B^0_i$ there is at most $cst\times (2k+1)^{r-1}$ isomorphic derived chains of modules  $B^k_i$. The constant depends only on the number of faces of codimension at least $1$ in the boundary of $\mathcal{H}_k$.
		
		Therefore, dividing by $(2k+1)^r$ and taking $k\rightarrow+\infty$ yields the result.
	\end{proof}

	\section{On $\lcs$ geometry}\label{onlcsgeometry}
	
	\subsection{General definitions}
	Let us begin this section by stating that there are three different notions of $\lcs$ manifold. For additional information about those characterizations, the reader may refer themselves to \cite{Chantraine2016ConformalSG} and \cite{chantraine2024productslocallyconformalsymplectic}, or to \cite{currier2025projectionexactlagrangianslocally}. Here, we will pick the easier definition to work with in our setting.
	
	At the core of most definitions in $\lcs$ geometry is the derivative $d\cdot-\beta\wedge\cdot$, called Lichnerowicz derivative.
	\begin{Def}\label{Lichnerowicz}
		Let $M$ be a manifold and $\beta\in\Omega^1(M)$ be closed. Then the Lichnerowicz derivative associated to $\beta$ is the map:
		\begin{align*}
			d_\beta:\Omega^*(M)&\rightarrow\Omega^{*+1}(M)\\
			\alpha&\mapsto d\alpha-\beta\wedge\alpha
		\end{align*}
	\end{Def}
	One can easily verify that $d_\beta^2=0$, meaning that $(\Omega^*(M),d_\beta)$ is a proper chain complex. In the rest of this section, it will become apparent that the various definitions central to $\lcs$ geometry follow similar patterns: take a ``classical'' definition of symplectic geometry, but change the derivative $d$ to $d_\beta$.
	\begin{Def}
		Let $M$ be a manifold. Take $\omega\in\Omega^2(M)$ non-degenerate and $\beta\in\Omega^1(M)$ closed such that $d_\beta\omega=0$. The form $\omega$ will be called a $\lcs$ form, whereas $\beta$ will be called the Lee form. The pair $(\omega,\beta)$ will be called a $\lcs$ pair. By $LCS(M)$ we will denote the set of $\lcs$ pairs on $M$ quotiented by the equivalence relation $(\omega,\beta)\sim(e^{g}\omega,\beta+dg)$.
		
		A $\lcs$ manifold is the data of a manifold $M$ and an element of $LCS(M)$. For the sake of simplicity, we will often forgo writing the whole equivalence class and simply write $(M,\omega,\beta)$ where $(\omega,\beta)$ is an $\lcs$ pair in the chosen class.
	\end{Def}
	Observe that, for any differential form $\alpha$, $e^gd_\beta\alpha=d_{\beta+dg}(e^g\alpha)$. Since $\beta$ is closed and therefore locally exact, this implies that $\omega$ is locally symplectic up to a conformal factor. Therefore, as in symplectic geometry, there are some maximal (dimension-wise) submanifolds on which $\omega$ pulls back to $0$.
	\begin{Def}
		Let $(M,\omega,\beta)$ be a $\lcs$ manifold of dimension $2n$, and $L$ be a manifold of dimension $n$. An embedding (resp. immersion) $i:L\rightarrow M$ such that $i^*\omega=0$ is called a Lagrangian embedding (resp. immersion) and $i(L)$ is called a (resp. immersed) Lagrangian submanifold. The ``submanifold'' is sometimes dropped.
	\end{Def}
	Observe that since $d_\beta^2=0$, $\omega$ being $d_\beta$-closed means that it can be $d_\beta$-exact in some cases.
	\begin{Def}
		Let $(M,\omega,\beta)$ be a $\lcs$ manifold. If $\omega=d_\beta\lambda$ for some $\lambda\in\Omega^1(M)$, then the pair $(\lambda,\beta)$ will be called an exact $\lcs$ pair and $\lambda$ will be called an exact $\lcs$ form. By $ELCS(M)$ we will denote the set of exact $\lcs$ pairs on $M$ quotiented by the equivalence relation $(\lambda,\beta)\sim(e^{g}(\lambda+d_\beta f),\beta+dg)$ for some maps $f$ and $g$.
		
		An exact $\lcs$ manifold is the data of a manifold $M$ and an element of $ELCS(M)$. For the sake of simplicity, we will often forgo writing the whole equivalence class and simply write $(M,\lambda,\beta)$ where $(\lambda,\beta)$ is an exact $\lcs$ pair in the chosen class.
	\end{Def}

	Note that, given a $1$-form $\lambda$ and a closed $1$-form $\beta$, if $d_\beta\lambda$ is non-degenerate, then $(\lambda,\beta)$ is a $\lcs$ pair. Indeed, $d_\beta^2=0$.
	
	\begin{Exe}
		Given a manifold $X$ endowed with a contact form $\alpha$, the manifold $(X\times\Sph_\theta^1,\alpha,d\theta)$ is an exact $\lcs$ manifold. For a more specific example, $\Sph^3\times\Sph^1$ can be endowed with an exact $\lcs$ structure.
	\end{Exe}
	
	As in symplectic geometry, an $\lcs$ form $\omega$ can be used to define isotropic-coisotropic submanifolds called Lagrangian submanifolds. In exact $\lcs$ geometry, we can define special Lagrangian submanifolds called exact Lagrangian submanifolds, which are direct generalizations of their symplectic counterparts.
	
	\begin{Def}
		Let $(M,\lambda,\beta)$ be an exact $\lcs$ manifold of dimension $2n$, and $L$ be a manifold of dimension $n$. An embedding (resp. immersion) $i:L\rightarrow M$ such that $i^*\lambda=d_{i^*\beta}f$ for some $f\in C^\infty(L)$ is called a $\beta$-exact Lagrangian embedding (resp. immersion) and $i(L)$ is called a (resp. immersed) Lagrangian submanifold. The ``submanifold'' is sometimes omitted. Whenever $\beta$ does not matter or is implicit, ``$\beta$-exact'' will just be written as ``exact''.
	\end{Def}
	
	In this paper, we will endeavor to study the rigidity of exact Lagrangians. As such, we need a good notion of deformations for such submanifolds.
	
	\begin{Def}\label{defham}
		Let $(M,\lambda,\beta)$ be an exact $\lcs$ manifold and\newline set $d_\beta\lambda=:\omega$.
		\begin{itemize}
			\item For any map $h:M\times[0,1]\rightarrow\mathbb{R}$, the time-dependent vector field $X^t_h$ uniquely determined by the equation \[\iota_{X^t_h}\omega_x=-d_\beta h_{(x,t)} \] is called the Hamiltonian vector field associated to $h_t$.
			\item The vector field $X^t_h$ generates a flow $\phi^t_h$ such that 
			\[\frac{d}{dt}\phi^t_h=X^t_h\text{ et } \phi_0=id.\]
			Whenever $\phi_t$ is a diffeomorphism at every point in time, $\phi_t$ is called the Hamiltonian flow associated with $h$.
			\item For $\phi_t$ a Hamiltonian flow, for any $t_0\in[0,1]$, the diffeomorphism $\phi_{t_0}$ is called a Hamiltonian diffeomorphism.
		\end{itemize}
	\end{Def}
	Careful computation yields that Hamiltonian flows of an exact $\lcs$ manifold $(M,\lambda,\beta)$ preserve the class of the exact $\lcs$ pair $(\lambda,\beta)$. 
	
	\subsection{The $\lcs$ geometry of cotangent bundles}
	
	In the rest of this paper, we will be interested in $\lcs$ manifolds of the form $(T^*M, \lambda_M,\beta)$ where $\lambda_M$ is a canonical Liouville form and $\beta\in\Omega^1(M)$ is closed (we omitted writing the canonical pullback of $\beta$ to $T^*M$ for clarity's sake). One should note that for such a choice of an exact $\lcs$ pair, $d_\beta\lambda_M$ is always non-degenerate. 
	
	Moreover, any choice of exact $\lcs$ pair $(\lambda_M,\gamma)$ for $\gamma\in\Omega^1(T^*M)$ is equivalent to an exact $\lcs$ pair $(e^{-h}\lambda_M,\beta)$ for some smooth map $h$ on $T^*M$ and some closed $\beta\in\Omega^1(M)$. As shown in \cite{currier2025projectionexactlagrangianslocally}, if $h(x,t\frac{\xi}{\|\xi\|})=o_{+\infty}(\ln(t))$ for some metric and $t\in\R_+^*$, then there is a diffeomorphism $\phi:T^*M\rightarrow T^*M$ such that $\phi^*\lambda_M=e^{-h}\lambda_M$ and $\phi^*\beta=\beta$.
	
	\section{Lifting $\lcs$ objects to symplectic objects}\label{sectionLifts}
	
	From the previous subsection, one should note that if $(\lambda,dg)$ is an exact $\lcs$ pair for some smooth map $g$, then so is $(e^{-g}\lambda,0)$, i.e., we are in the classical symplectic setting. More specifically, if $\lambda$ is the canonical Liouville form on $T^*M$ and $g$ is a smooth map on $M$, the re-scaling $(T^*M,\lambda,dg)\rightarrow (T^*M,\lambda,0);\;(x,\xi)\mapsto(x,e^{-g}\xi)$ allows us to view our $\lcs$ manifold as a symplectic manifold. Applying this insight to a more general closed $1$-form, call it $\beta$, on $M$ is as simple as pulling everything back to the integral cover of $\beta$ through the projection:
	\begin{align*}
		\pi_g:\;(T^*\tilde{M},\lambda_{\tilde{M}},0)&\rightarrow(T^*M,\lambda_M,\beta);\\
		(x,\xi)&\mapsto (\pi(x), D\pi_x(e^{g(x)}\xi))
	\end{align*}
	
	where $\tilde{M}$ is the integral cover of $\beta$, $g$ is the primitive of $\beta$ on $\tilde{M}$, and $\pi:\tilde{M}\rightarrow M$ is the canonical projection.
	
	This implies that if $\Lambda$ is an exact Lagrangian submanifold and $\phi_t$ is a Hamiltonian isotopy of $(T^*M,\lambda_M,\beta)$, then:
	\begin{enumerate}
		\item $\pi_g^{-1}(\Lambda)$ is an exact Lagrangian submanifold of $(T^*\tilde{M},\lambda_{\tilde{M}},0)$;
		\item let $\tilde{\Phi}$ be Hamiltonian isotopy of $(T^*\tilde{M},\lambda_{\tilde{M}},0)$ given by the Hamiltonian $\tilde{f}_t=e^{-g}\times(f_t\circ\pi_g)$, then there is a commutative diagram:
		\[\begin{array}{ccc}
			T^*\tilde{M}\times I & \overset{\tilde{\Phi}_t}{\rightarrow}& T^*\tilde{M}\\
			\pi_g\times id\downarrow& \circlearrowleft&\downarrow\pi_g\\
			T^*M\times I&\overset{\phi_t}{\rightarrow}&T^*M
		\end{array}\]
	\end{enumerate}
	
	The first point is a direct consequence of the previous discussion, while the second one can be inferred from computing $\lambda_{\tilde{M}}=e^{-g}\pi_g^*(\lambda_M)$ and therefore $d\lambda_{\tilde{M}}=e^{-g}\pi_g^*(d\lambda_M-\beta\wedge\lambda_M)$. This directly implies that if $\iota_Xd\lambda_{\tilde{M}}=-d\tilde{f}_t=-e^{-g}\pi_g^*(d_\beta f_t)$, then $\pi_g^*\left(\iota_{\pi_{g*}(X)}d_\beta\lambda_M\right)=-\pi_g^*d_\beta f_t$, yielding the commutative diagram.
	
	\begin{Def}
		Given a Hamiltonian isotopy $\phi$ of $(T^*M,\lambda_M,\beta)$ whose Hamiltonian is $f_t$, the Hamiltonian isotopy of $(T^*\tilde{M},\lambda_{\tilde{M}},0)$ whose Hamiltonian is $\tilde{f}_t=e^{-g}f\circ\pi_g$ is called the symplectized lift of $\phi$ and will be noted $\tilde{\Phi}$. Similarly, for any Lagrangian $\Lambda$ in $(T^*M,\lambda_M,\beta)$, $\pi_g^{-1}(\Lambda)$ is called the symplectized lift of $\Lambda$.
	\end{Def}
	
	\begin{Rem}\label{bertelson&co1}
		The symplectized lift of a Hamiltonian isotopy $\phi$, that we here call $\tilde{\Phi}$ in this paper, is the lift noted $\overline{\phi}$ in \cite{bertelson2026nonsqueezingglobalrigidityresults}.
	\end{Rem}

	This implies that we can now use Guillermou, Kashiwara, and Shapira's classical constructions to transform our ``$\lcs$'' Hamiltonian isotopies and exact Lagrangians into ``symplectic'' Hamiltonian isotopies and exact Lagrangians.
	
	\subsection{Classical homogeneous lifts}\label{lifts,} Let us first introduce a bit of notation:
	\begin{Def}
		For a manifold $X$ and a submanifold $Y$, $T_YX$ will be the conormal of $Y$ viewed in $T^*X$ and $\dot{T}^*X$ will be $T^*X\backslash T^*_XX$.
		
		Moreover, $\lambda_X$ will be the canonical Liouville form on $T^*X$. If the manifold is explicit, the index will be dropped. Idem for $\omega_X=d\lambda_X$.
	\end{Def}
	
	Guillermou, Kashiwara, and Shapira's constructions revolve around the projection:
	
	\begin{align*}
		\rho_{cl}:\;\left((T^*\tilde{M})\times \dot{T}^*\R,\lambda_{\tilde{M}\times\R},0\right)&\rightarrow(T^*\tilde{M},\lambda_{\tilde{M}},0);\\
		(x,\xi,s,\sigma)&\mapsto \left(x, \frac{\xi}{\sigma}\right)
	\end{align*}
	
	The first use of this projection is to transform exact Lagrangians into conic Lagrangians.
	
	\begin{Def}
		Assume that $\Lambda$ is an exact Lagrangian submanifold of $(T^*M,\lambda_M,\beta)$. Take $\tilde{M}$ the integral cover of $\beta$ and $g$ a primitive of $\beta$ in the cover. Then 
		\[C^{cl}(\Lambda)=\{(x,\xi,s,\sigma)\,:\,\rho_{cl}^{-1}\circ\pi_g^{-1}(\Lambda)\}\]
		
		is called the classical conification of $\Lambda$ and it is a Lagrangian in $\left((T^*\tilde{M})\times \dot{T}^*\R,\lambda_{\tilde{M}\times\R},0\right)$.
	\end{Def}
	
	\begin{Rem}
		Note that if $\alpha\neq id$ is a deck transformation of $T^*(M\times\R)$, then $\alpha(C^{cl}(\Lambda))\neq C^{cl}(\Lambda)$.
	\end{Rem}
	
	The second use of this projection is to transform Hamiltonian isotopies into homogeneous Hamiltonian isotopies.
	
	\begin{Pro}[\cite{Guillermou2012SQHIQNP}]\label{rhoclass}
		Given a Hamiltonian isotopy $\tilde{\Phi}:T^*\tilde{M}\times I\rightarrow T^*\tilde{M}$ of $(T^*\tilde{M},\lambda_{\tilde{M}},0)$ whose Hamiltonian is $\tilde{f}_t$, then there is a Hamiltonian isotopy $\Phi^{cl}:T^*\tilde{M}\times\dot{T}^*\R\times I\rightarrow T^*\tilde{M}\times\dot{T}^*\R$ in $(T^*\tilde{M}\times\dot{T}^*\R,\lambda_{\tilde{M}\times\R},0)$ such that:

		\begin{enumerate}
			\item its Hamiltonian is $f_t^{cl}=\sigma\tilde{f}_t\circ\rho_{cl}$, with $\sigma$ the coordinate in the cotangent bundle over $\R$;
			\item we have the following commutative diagram: \[\begin{array}{ccc}
				T^*\tilde{M}\times\dot{T}^*\R\times I & \overset{\Phi^{cl}_t}{\rightarrow}& T^*\tilde{M}\times\dot{T}^*\R\\
				\rho_{cl}\times id\downarrow& \circlearrowleft&\downarrow\rho_{cl}\\
				T^*\tilde{M}\times I&\overset{\tilde{\Phi}_t}{\rightarrow}&T^*\tilde{M}
			\end{array}\]
			\item moreover, there is a smooth function $u:T^*\tilde{M}\times I\rightarrow \R$ such that $\Phi^{cl}_t(x,\xi,s,\sigma)=(x',\xi'\sigma,s+u\circ\rho_{cl}(x,\xi,s,\sigma),\sigma)$ where $\xi\in T_x^*\tilde{M}$, $\sigma\in \dot{T}_s^*\R$, and $(x',\xi')=\tilde{\Phi}_t\circ\rho_{cl}(x,\xi,s,\sigma)$;
			\item if the canonical projection of the support of $\tilde{\Phi}$ on $T^*M$ is compact, then $\Phi^{cl}$ can be extended to $\dot{T}^*(M\times\R)$ by $\Phi^{cl}(x,\xi,s,0)=(x,\xi,s+v(t),0)$ for some smooth map $v:I\rightarrow\R$.
		\end{enumerate}
	\end{Pro}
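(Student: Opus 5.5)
We argue by direct computation with the explicit Hamiltonian $f^{cl}_t=\sigma\,\tilde f_t\circ\rho_{cl}$ on the open set $\{\sigma\neq0\}$ of $T^*\tilde M\times\dot T^*\R$, and write the coordinates as $(x,\xi,s,\sigma)$. Three observations drive the plan. First, $f^{cl}_t$ is homogeneous of degree one in $(\xi,\sigma)$, because $\rho_{cl}$ is invariant under $(\xi,\sigma)\mapsto(c\xi,c\sigma)$ for $c>0$. Second, $f^{cl}_t$ does not depend on $s$. Third, $\lambda=\xi\,dx+\sigma\,ds$. Hence the flow $\Phi^{cl}_t$ is a homogeneous Hamiltonian isotopy, and the $\sigma$-component is conserved since $\dot\sigma=-\partial_s f^{cl}_t=0$. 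This proves point (1), provided the flow exists for all $t\in I$; existence follows once we know the $(x,\xi/\sigma)$ part is governed by $\tilde\Phi_t$.

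For point (2), we fix $\sigma\neq0$ and compute Hamilton's equations in $(x,\xi)$. They give $\dot x=\partial_\xi(\sigma\tilde f_t(x,\xi/\sigma))=(\partial_\eta\tilde f_t)(x,\eta)$ and $\dot\xi=-\sigma\,(\partial_x\tilde f_t)(x,\eta)$, where $\eta=\xi/\sigma$. These are exactly Hamilton's equations for $\tilde f_t$ in the variables $(x,\eta)$. So $\rho_{cl}\circ\Phi^{cl}_t=\tilde\Phi_t\circ\rho_{cl}$, which is the commutative diagram. In particular $\xi_t=\sigma\xi'$ with $(x',\xi')=\tilde\Phi_t(x,\xi/\sigma)$, giving the first two components in (3).

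For the $s$-component in (3), we use $\dot s=\partial_\sigma f^{cl}_t=\tilde f_t(x,\eta)-\langle\eta,\partial_\eta\tilde f_t(x,\eta)\rangle$. The right-hand side depends only on $(x_t,\eta_t)=\tilde\Phi_t(\rho_{cl}(x,\xi,s,\sigma))$. Integrating in time therefore gives $s_t=s+u_t(\rho_{cl}(x,\xi,s,\sigma))$ with
\[u_t(y)=\int_0^t\big(\tilde f_\tau-\lambda_{\tilde M}(X_{\tilde f_\tau})\big)(\tilde\Phi_\tau(y))\,d\tau,\]
which is smooth, as required. This is the classical primitive of $\tilde\Phi_t^*\lambda_{\tilde M}-\lambda_{\tilde M}$.

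The main obstacle is point (4), the extension to $\sigma=0$, where $\rho_{cl}$ blows up. Under the compact-support hypothesis, the support of $\tilde\Phi$ is contained in $\{|\eta|\le C\}$, possibly after a fiberwise bound; this is where the hypothesis enters, using equivariance of $\tilde f$ for the deck action. Outside this set $\tilde f_t$ equals a function $c(t)$ of $t$ alone, since a compactly supported Hamiltonian is locally constant off its support. We normalize this constant by adding a function of $t$; a normalization that is not zero leads to the term $v(t)$. In the cone region $|\xi|>C|\sigma|$, which contains a neighborhood of every point with $\sigma=0$ and $\xi\ne0$, the Hamiltonian is therefore $f^{cl}_t=c(t)\sigma$. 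It generates the flow $(x,\xi,s,\sigma)\mapsto(x,\xi,s+\int_0^tc,\sigma)$. This formula extends smoothly across $\sigma=0$, giving $\Phi^{cl}(x,\xi,s,0)=(x,\xi,s+v(t),0)$ with $v(t)=\int_0^tc(\tau)\,d\tau$. The points to verify are that the cone neighborhood is preserved by the flow, which follows from (2), and that the glued map is a smooth homogeneous Hamiltonian isotopy of $\dot T^*(\tilde M\times\R)$.
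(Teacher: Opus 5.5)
Your computation is the standard homogenization argument of Guillermou--Kashiwara--Schapira, which is what the paper relies on: it cites the proposition and gives no proof of its own. Points (1)--(3) are correct as you derive them. There is one small sign slip. With the paper's convention $\iota_X\omega=-dh$, your $u_t$ satisfies $du_t=\lambda_{\tilde M}-\tilde\Phi_t^*\lambda_{\tilde M}$, not $\tilde\Phi_t^*\lambda_{\tilde M}-\lambda_{\tilde M}$; for instance, $\tilde f=\eta^2/2$ gives $u_t=-t\eta^2/2$. This sign is exactly what makes $\Phi^{cl}_t$ preserve $\xi\,dx+\sigma\,ds$, and nothing else depends on it.

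The genuine gap is in (4), at ``outside this set $\tilde f_t$ equals a function $c(t)$ of $t$ alone''.

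\emph{The missing step.} The hypothesis only gives $X_{\tilde f_t}=0$, hence $d\tilde f_t=0$, off the support of $\tilde\Phi$, so $\tilde f_t$ is only \emph{locally} constant there. To get a single constant on the region $|\xi/\sigma|\gg1$, you need the part of the complement lying over large $|\eta|$ to be connected. That holds when $\dim M\ge2$: fibrewise it is the complement of a ball in $\R^n$, over a connected base.

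\emph{It fails in dimension one, and so does the conclusion.} On $T^*\Sph^1$ (so $\beta=0$ and $\tilde M=M$), take $\tilde f=\chi(\eta)$ with $\chi$ smooth, $\chi=0$ on $]-\infty,-1]$ and $\chi=1$ on $[1,+\infty[$. Then $\tilde\Phi_t(x,\eta)=(x+t\chi'(\eta),\eta)$ has compact support, while
$\Phi^{cl}_t(x,\xi,s,\sigma)=\bigl(x+t\chi'(\xi/\sigma),\xi,s+t\chi(\xi/\sigma)-t\tfrac{\xi}{\sigma}\chi'(\xi/\sigma),\sigma\bigr)$.
For $\xi>0$ this tends to $(x,\xi,s+t,0)$ as $\sigma\to0^+$ and to $(x,\xi,s,0)$ as $\sigma\to0^-$, so no extension exists. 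You must therefore add a hypothesis and use it at this step: either $\dim M\ge2$, or directly that $\tilde f_t$ is a function of $t$ outside a set with compact projection.

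\emph{Two smaller corrections in the same paragraph.} First, the fibre bound only needs to be locally uniform in $x$. If the projection is $\pi_g$, the support lies in $\{|\eta|\le Ce^{-g(x)}\}$, which is not uniformly bounded since $g$ is unbounded on $\tilde M$. It is bounded over compact sets of $x$, and that is all the cone neighbourhood of a point $(x_0,\xi_0,s_0,0)$ requires. Second, no deck equivariance of $\tilde f$ is involved: a general $\tilde\Phi$ has none, and compactness of the projected support already gives the local bound.

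With these changes, the rest of your argument for (4) goes through: invariance of the cone region, the flow $s\mapsto s+\int_0^tc$, and smoothness of $f^{cl}_t=c(t)\sigma$ across $\sigma=0$.
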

	
	\begin{Def}
		If $\tilde{\Phi}$ is the symplectized lift of $\phi$, then $\Phi^{cl}$ will be called the classical homogeneous lift of $\phi$
	\end{Def}
	
	As shown in \cite{Guillermou2012SQHIQNP}, homogeneous Hamiltonian isotopies of $(T^*\tilde{M},\lambda_{\tilde{M}},0)$ can be ``quantized'' in the sense that for any such isotopy $\phi$ there is a sheaf $K$ over $\tilde{M}\times\tilde{M}\times\R$ such that, among other things, $K$ is unique up to isomorphism and, for any sheaf bounded derived $F$ with compact support, $\dot{SS}(K\circ F_{|t})=\phi_t(\dot{SS}(F))$, where $\dot{SS}$ is the microsupport minus the $0$-section.
	
	One of the uses of $\Phi^{cl}$ and its quantization is a sheaf-theoretical proof of Gromov's non-squeezing theorem, as shown in \cite{Guillermou2019SheavesAS}. The second cornerstone of this proof is a quantity, called the displacement energy of a sheaf (see \cite{Guillermou2019SheavesAS} or section \ref{nonsqueezingsymp} for more details), and denoted $e(F)$. The equivariance of $\Phi^{cl}$ and the translation of the $s$ coordinate is central to the proof as it ultimately implies that $e(F)=e(K^{cl}\circ F)=e(K^{cl}_t\circ F)$ for all $t$.
	
	However, classical homogeneous lifts do not commute with deck transformations. This point will be further expanded upon when comparing the lifts in subsection \ref{compLift}. This makes the quantization of this lift inconvenient to use in conjunction with asymptotic Betti numbers and equivariant sheaves. As such, we need to define a new homogeneous lift that is equivariant under the action of deck transformations. 
	
	\subsection{Equivariant homogeneous lifts} Whenever we want our lifts to be equivariant under deck transformations, then $\rho_{cl}$ needs to be ``twisted''. As such, we define the map:
	\begin{align*}
		\rho_{eq}:\;\left((T^*\tilde{M})\times \dot{T}^*\R,\lambda_{\tilde{M}\times\R},0\right)&\rightarrow(T^*\tilde{M},\lambda_{\tilde{M}},0);\\
		(x,\xi,s,\sigma)&\mapsto \left(x, e^{-g}\left(\frac{\xi}{\sigma}+sdg\right)\right)
	\end{align*}
	
	We can then define the conification:
	\begin{Def}
		Assume that $\Lambda$ is an exact Lagrangian submanifold of $(T^*M,\lambda_M,\beta)$. Take $\tilde{M}$ the integral cover of $\beta$ and $g$ a primitive of $\beta$ in the cover. Then 
		\[C^{eq}(\Lambda)=\{(x,\xi,s,\sigma)\,:\,\rho_{eq}^{-1}\circ\pi_g^{-1}(\Lambda)\}\]
		
		is called the equivariant conification of $\Lambda$ and it is a Lagrangian in $\left((T^*\tilde{M})\times \dot{T}^*\R,\lambda_{\tilde{M}\times\R},0\right)$.
	\end{Def}
	
	Working with the lifts on the integral cover has the advantage of allowing us to lift Hamiltonian isotopies (of the $\lcs$ kind) to homogeneous Hamiltonian isotopies (of the symplectic kind).
	\begin{Pro}\label{betaificationofhamisotop}
		Let $\phi:T^*M\times I\rightarrow T^*M$ be a Hamiltonian isotopy of $(T^*M,\lambda_M,\beta)$ associated to the Hamiltonian $f:T^*M\times I\rightarrow\mathbb{R}$. Then there is a homogeneous Hamiltonian isotopy of $(T^*\tilde{M}\times\R,\lambda_{\tilde{M}\times\R},0)$ called $\Phi^{eq}$ such that:
		
		\begin{enumerate}
			\item its Hamiltonian is $f^{eq}_t=\sigma\times(\pi_g\circ\rho_{eq})^*f_t$.
			\item we have the following commutative diagram: \[\Phi^{eq}:T^*\tilde{M}\times\dot{T}^*\mathbb{R}\times I\rightarrow T^*\tilde{M}\times\dot{T}^*\mathbb{R} \] such that we have the commutative diagram:
			\[\begin{array}{ccc}
				T^*\tilde{M}\times\dot{T}^*\mathbb{R}\times I & \overset{\Phi^{eq}_t}{\rightarrow}& T^*\tilde{M}\times\dot{T}^*\mathbb{R}\\
				\rho_{eq}\times id\downarrow& \circlearrowleft&\downarrow\rho_{eq}\\
				T^*\tilde{M}\times I&\overset{\tilde{\Phi}_t}{\rightarrow}&T^*\tilde{M}
			\end{array}\]
			where $\tilde{\Phi}$ is the symplectized lift of $\phi$.
			\item there is a smooth function $h_t:T^*M\times I\rightarrow\mathbb{R}$  such that:
			\begin{align*}
				& \qquad\qquad\qquad\qquad\Phi^{eq}_t(x,\xi,s,\sigma)=\\
				&\bigg(x',e^{g(x)}\xi'\sigma-(s+e^{g(x)-g(x')}H_t)\sigma dg, e^{g(x')-g(x)}s+H_t,e^{g(x)-g(x')} \sigma\bigg),
			\end{align*}
			with $(x',\xi')=\tilde{\Phi}_{t}\circ\rho_{eq}\;(x,\xi,s,\sigma)$ and $H_t=h_t\circ\pi_g\circ\rho_{eq}(x,\xi,s,\sigma)$.
			\item if $M$ is connected and $f_t$ is a constant $c_t$ outside of a compact set, then $\Phi^{eq}$ can be extended to a homogeneous Hamiltonian isotopy of $(\dot{T}^*(\tilde{M}\times\R),\lambda)$ such that:
			\[ \Phi^{eq}_t(x,\xi,s,0)=(x,\xi,s+w(t),0)\]
			for some smooth $w:\mathbb{R}\rightarrow\mathbb{R}$.
		\end{enumerate}
	\end{Pro}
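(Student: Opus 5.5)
The plan is to write $\rho_{eq}$ as $\rho_{cl}$ composed with an exact homogeneous symplectomorphism, and then obtain $\Phi^{eq}$ by conjugating the classical lift of proposition \ref{rhoclass}. Consider the diffeomorphism $\chi:\tilde{M}\times\R\rightarrow\tilde{M}\times\R$, $(x,s)\mapsto(x,e^{g(x)}s)$, and let $\Psi:T^*(\tilde{M}\times\R)\rightarrow T^*(\tilde{M}\times\R)$ be its cotangent lift. Being a cotangent lift, $\Psi$ preserves $\lambda_{\tilde{M}\times\R}$, is homogeneous in the fibers, and maps $T^*\tilde{M}\times\dot{T}^*\R$ to itself.

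A direct computation identifies $\Psi$ explicitly. Write $(x,\xi',s',\sigma')=\Psi(x,\xi,s,\sigma)$. The pullback of covectors gives $\sigma=e^{g}\sigma'$ and $\xi=\xi'+\sigma' e^{g}s\,dg$. Hence
\[\Psi(x,\xi,s,\sigma)=\big(x,\;\xi-s\sigma\, dg,\;e^{g}s,\;e^{-g}\sigma\big).\]
Up to the choice of direction of $\chi$, which I would fix by the computation, this yields
\[\rho_{cl}\circ\Psi^{\pm1}=\rho_{eq}.\]
Concretely, the conjugating map sends $(x,\xi,s,\sigma)$ to a point whose ratio $\xi'/\sigma'$ equals $e^{-g}(\xi/\sigma+s\,dg)$. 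The inverse lift $(x,s)\mapsto(x,e^{-g}s)$ does exactly this, with $\xi'=\xi+s\sigma\,dg$ and $\sigma'=e^{g}\sigma$. Call this map $\Psi$.

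Next, let $\Phi^{cl}$ be the classical homogeneous lift of the symplectized lift $\tilde{\Phi}$, given by proposition \ref{rhoclass}. Define
\[\Phi^{eq}_t:=\Psi^{-1}\circ\Phi^{cl}_t\circ\Psi.\]
This is a homogeneous Hamiltonian isotopy, since $\Psi$ is exact and homogeneous. Its Hamiltonian is $f^{cl}_t\circ\Psi$, and
\[f^{cl}_t\circ\Psi=e^{g}\sigma\,\tilde{f}_t\circ\rho_{cl}\circ\Psi=e^{g}\sigma\, e^{-g}\,(\pi_g\circ\rho_{eq})^*f_t=\sigma(\pi_g\circ\rho_{eq})^*f_t,\]
which is point (1). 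The commutative diagram of point (2) follows immediately from $\rho_{cl}\circ\Psi=\rho_{eq}$ together with point (2) of proposition \ref{rhoclass}.

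For point (3), I would plug point (3) of proposition \ref{rhoclass} into the conjugation. The $s$-coordinate becomes $e^{g(x)-g(x')}s+e^{-g(x')}u\circ\rho_{eq}$. The $\sigma$-coordinate becomes $e^{g(x)-g(x')}\sigma$. The $\xi$-coordinate, which is $e^{g(x)}\xi'\sigma$ minus the $dg$ correction, comes from undoing $\Psi$ at the new base point.

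The main obstacle is to show that $H_t:=e^{-g(x')}u$ factors through $\pi_g\circ\rho_{eq}$, i.e.\ is invariant under the lifted deck action $(x,\xi)\mapsto(\alpha x,e^{-c}\xi)$, where $g\circ\alpha=g+c$. I would try to do this by writing $u$ as the action integral $\int(\tilde{f}_t-\lambda(X_{\tilde{f}_t}))\,dt$ along the $\tilde{\Phi}$-trajectory. One then checks that $\tilde{\Phi}$ commutes with this rescaled deck action, because $\tilde{f}_t\circ(\text{deck})=e^{-c}\tilde{f}_t$. Finally one tracks how the factors $e^{\pm c}$ in $u$ and in $e^{-g(x')}$ cancel. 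If they do not cancel on the nose, the fallback is to verify the same invariance directly for the flow of $f^{eq}$, which is manifestly $\mathcal{H}$-invariant.

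For point (4), suppose $f_t=c_t$ outside a compact set. There $f^{eq}_t=\sigma c_t$, whose flow is $s\mapsto s+\int_0^tc$ with $x$, $\xi$ and $\sigma$ fixed. A neighbourhood of $\{\sigma=0\}$ in $\dot{T}^*(\tilde{M}\times\R)$ has $\xi\neq0$, so $\xi/\sigma$ is large and, away from $\xi=0$, this region lies outside the support of $\pi_g^{*}f$ after rescaling. Hence setting $\Phi^{eq}_t(x,\xi,s,0)=(x,\xi,s+w(t),0)$ with $w(t)=\int_0^tc_\tau\,d\tau$ gives a smooth homogeneous extension. This is the same argument as point (4) of proposition \ref{rhoclass}, transported by $\Psi$.
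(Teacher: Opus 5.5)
Your strategy is the paper's. The map you end up calling $\Psi$, the cotangent lift of $(x,s)\mapsto(x,e^{-g}s)$, is exactly the map $L$ used there, and conjugating $\Phi^{cl}$ by it gives (1) and (2) as you describe.

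There is, however, a computational error in (3), and it makes the step you flag as the main obstacle fail as written. The three steps of the conjugation act on $s$ as follows: $\Psi$ rescales $s$ by $e^{-g(x)}$, $\Phi^{cl}_t$ adds $u_t\circ\rho_{eq}$, and $\Psi^{-1}$ at the new base point rescales by $e^{+g(x')}$. So the $s$-coordinate is
\[e^{g(x')-g(x)}s+e^{g(x')}\,u_t\circ\rho_{eq},\]
not $e^{g(x)-g(x')}s+e^{-g(x')}u_t\circ\rho_{eq}$. A quick check: $\Psi$ preserves the product $s\sigma$, so the new $s$ times the new $\sigma=e^{g(x)-g(x')}\sigma$ must equal $s\sigma+e^{g(x)}\sigma\,u_t\circ\rho_{eq}$.

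Your formula therefore contradicts the one in the statement, and your $H_t=e^{-g(x')}u_t$ is genuinely not deck-invariant. Suppose $g\circ\alpha=g+c$. Then $\rho_{eq}$ intertwines $\alpha$ with $R_\alpha(x,\eta)=(\alpha x,e^{-c}\eta)$, and $\tilde{\Phi}$ commutes with $R_\alpha$. Your action-integral formula then gives $u_t\circ R_\alpha=e^{-c}u_t$. Since the base point moves $x'\mapsto\alpha x'$, the factor $e^{-g(x')}$ contributes another $e^{-c}$, so $e^{-g(x')}u_t$ picks up $e^{-2c}$. The cancellation you hoped for does not happen.

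With the correct $H_t=e^{g(x')}u_t\circ\rho_{eq}$, the factor $e^{c}$ from $x'\mapsto\alpha x'$ and the factor $e^{-c}$ from $u_t$ cancel exactly. This gives invariance of $(x,\eta)\mapsto e^{g(x')}u_t(x,\eta)$ under $\{R_\alpha\}$, which is precisely the deck group of the covering $\pi_g$, so $h_t$ exists directly. That is a slightly more explicit argument than the paper's, which is your fallback:
\begin{itemize}
\item $\Phi^{eq}$ commutes with deck transformations because $f^{eq}$ is invariant;
\item the term $e^{g(x')-g(x)}s$ is invariant;
\item hence so is the remainder $H_t$ of the $s$-coordinate.
\end{itemize}

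For (4), keep your direct computation rather than ``transporting'' proposition \ref{rhoclass}(4). Near $\{\sigma=0\}$ the relevant fibre coordinate $\xi/\sigma+s\,dg$ is large, so $f^{eq}_t=\sigma c_t$ there and the flow is $s\mapsto s+\int_0^tc_\tau\,d\tau$. Conjugating a shift $s\mapsto s+v(t)$ by $\Psi$ would instead give $s\mapsto s+e^{g(x)}v(t)$. The result is $x$-independent only because $\tilde f_t=e^{-g}c_t$ off the support, so the classical lift itself shifts by $e^{-g(x)}\int_0^tc_\tau\,d\tau$.
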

	
	\begin{proof}
		Consider the following diffeomorphism:
		\begin{align*}
			L:\qquad T^*(\tilde{M}\times\mathbb{R})&\rightarrow T^*(\tilde{M}\times\mathbb{R})\\
			(x,\xi,s,\sigma)&\mapsto (x,\xi+s\sigma dg,e^{-g(x)}s,e^{g(x)}\sigma)
		\end{align*}
		We can verify $L^*\lambda_{\tilde{M}\times\R}=\lambda_{\tilde{M}\times\R}$. Since $f^{eq}=f^{cl}\circ L$, we have the following diagram:
		\[\begin{array}{ccc}
			T^*\tilde{M}\times\dot{T}^*\mathbb{R}\times I & \overset{\Phi^{eq}_t}{\rightarrow}& T^*\tilde{M}\times\dot{T}^*\mathbb{R}\\
			L\times id\downarrow& \circlearrowleft&\downarrow L\\
			T^*\tilde{M}\times\dot{T}^*\mathbb{R}\times I & \overset{\Phi^{cl}_t}{\rightarrow}& T^*\tilde{M}\times\dot{T}^*\mathbb{R}
		\end{array}\] Therefore, we can apply proposition \ref{rhoclass} to get the items 1, 2 and 4 since $\rho_{eq}=\rho_{cl}\circ L$. In particular, this shows that $\Phi^{eq}$ commutes with deck transformations and that
		\begin{align*}
			& \qquad\qquad\qquad\qquad\Phi^{eq}_t(x,\xi,s,\sigma)=\\
			&\bigg(x',e^{g(x)}\xi'\sigma-(s+e^{g(x)}u_t\circ\rho_{eq})\sigma dg, e^{g(x')-g(x)}s+e^{g(x')}u_t\circ\rho_{eq},e^{g(x)-g(x')} \sigma\bigg).
		\end{align*} Writing $H_t(x,\xi,s,\sigma)= e^{g(x')}u_t\circ\rho_{eq}(x,\xi,s,\sigma)$, we can observe that it is invariant under deck transformation since $e^{g(x')-g(x)}s$ is also invariant under deck transformations.  This implies the existence of $h_t$.
	\end{proof}

	\begin{Def}
		The isotopy $\Phi^{eq}$ will be called the equivariant homogeneous lift of $\phi$.
	\end{Def}
	
	This proves the first point of theorem \ref{thm2}. Let us then further explore the properties of those equivariant homogeneous lifts. Before proceeding, we will recall the following facts about homogeneous Hamiltonian isotopies:
	
	\begin{The}\label{Quant}(\cite{Guillermou2012SQHIQNP}, proposition 3.2, theorem 3.7)
		Set $X=\tilde{M}\times\R$. Let $\phi_t$ be a smooth homogeneous Hamiltonian isotopy of $(\dot{T}^*X,\lambda,0)$ (with $\phi_0=id$), and take $\mathcal{V}_t$ its Hamiltonian vector field. Define:
		\[\Lambda_\phi=\bigl\{\left(\phi_t(x,\xi),(x,-\xi),(t,-\lambda_{\phi_t(x,\xi)}(\mathcal{V}_t))\right):(x,\xi)\in T^*_x X-\{(x,0)\}\bigr\}.\] Then there exists $K\in D^{lb}(X\times X\times I)$ such that:
		\begin{itemize}
			\item $SS(K)\subset\Lambda_\phi\cup X\times X\times I\subset T^* (X\times X\times I)$.
			\item $K_{|t=0}\simeq \K_{X\times X}$.
		\end{itemize}
		Such a $K$ is unique up to isomorphism.
	\end{The}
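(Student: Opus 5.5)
The plan follows the strategy of \cite{Guillermou2012SQHIQNP}: prove uniqueness by a microlocal ``constancy in $t$'' argument, and existence by building $K$ on short time intervals and composing. Throughout I write $\circ$ for composition of kernels over $X$ and use the standard estimate $SS(K_1\circ K_2)\subset SS(K_1)\circ SS(K_2)$, valid under properness assumptions. Those assumptions are guaranteed here because $\Lambda_\phi$ is the graph of a homogeneous isotopy, hence closed and proper over $X\times X\times I$ up to the zero section. I also use that $\Lambda_\phi$ is a conic Lagrangian in $\dot T^*(X\times X\times I)$; this follows by a direct computation from $\phi_t^*\lambda=\lambda$, since homogeneity makes the $\tau$-component exactly $-\lambda(\mathcal{V}_t)$, the Hamiltonian evaluated at the point.

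\textbf{Uniqueness.} Let $K$ be a kernel as in the statement, and let $K^{-1}$ be a kernel for the inverse isotopy $\phi_t^{-1}$, defined like $K$ but with the factors swapped (e.g.\ via the duality $D'$). The composition estimate gives $SS(K^{-1}\circ_I K)\subset T^*_{\Delta}(X\times X)\times T^*_II\cup 0$. In particular the $\tau$-component vanishes, so by the microlocal Morse lemma this sheaf is locally constant in $t$. Its restriction at $t=0$ is $\K_\Delta$, so $K^{-1}\circ_I K\simeq \K_\Delta\boxtimes\K_I$. The same argument applied to a second solution $K'$ gives $K^{-1}\circ K'\simeq\K_{\Delta\times I}$, and composing with $K$ yields $K'\simeq K$.

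\textbf{Existence for short time.} For $t$ in a short interval, I would first reduce to isotopies supported in the cone over a small ball. Writing the Hamiltonian as a locally finite sum via a conic partition of unity, $\phi$ factors as a composition of isotopies, each supported in such a cone; this is where the $lb$ (locally bounded) category is needed, because $X=\tilde M\times\R$ is not compact. For such a localized isotopy, and small $t$, the Lagrangian $\Lambda_\phi$ is close to $T^*_\Delta(X\times X)\times T^*_II$. It therefore admits a local generating function, from which one builds a simple sheaf $K$ along $\Lambda_\phi$ with $K|_{t=0}\simeq\K_\Delta$; the standard model is the constant sheaf on a closed set $\{\varphi(x,y,t)\geq0\}$ near the diagonal, and $K$ is the identity kernel away from the support. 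Because $\Lambda_\phi$ is conic and the $\tau$-component is determined, the existence of a simple sheaf reduces to local Maslov-type obstructions. These obstructions vanish near the identity, since $\Lambda_\phi$ is a deformation of a conormal.

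\textbf{Gluing.} The short-time kernels are then glued in $t$ by composition. If $K$ exists on $[0,t_1]$ and $K'$ quantizes $\phi_t\circ\phi_{t_1}^{-1}$ for $t\in[t_1,t_2]$, then $K'\circ K_{t_1}$ quantizes $\phi$ on $[t_1,t_2]$. It agrees with $K$ at $t_1$, so by uniqueness the two pieces glue along $\{t_1\}$. Since $I$ is compact and the estimates are uniform on a compact family of small-time problems, finitely many steps suffice; locally finitely many product factors are handled in the $lb$ category, and for $\phi$ deck-invariant like $\Phi^{eq}$ the constants are uniform by invariance.

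The main obstacle is the short-time existence without compact support in $X$. I would first try to reduce it to the compactly supported GKS statement, using the locally finite conic decomposition together with the fact that an infinite composition of kernels with pairwise disjoint horizontal supports is still well defined in $D^{lb}$.
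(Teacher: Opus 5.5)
The paper gives no proof of this statement; it is quoted from \cite{Guillermou2012SQHIQNP}, so the benchmark is the GKS argument. You reproduce its architecture, and you are right to normalize by $K|_{t=0}\simeq\K_\Delta$ (the $\K_{X\times X}$ in the statement is a typo).

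Your uniqueness step is the GKS one. Put $K^{-1}=v^{-1}R\mathcal{H}om(K,\omega_X\boxtimes\K_{X\times I})$, with $v$ swapping the two $X$ factors. The $\tau$-components cancel in $K^{-1}\circ_I K'$, so this sheaf is constant in $t$ and hence isomorphic to $\K_{\Delta\times I}$.

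What is not justified is the estimate $SS(K^{-1}\circ_I K')\subset SS(K^{-1})\circ_I SS(K')$ on which this rests. It needs $\mathrm{supp}(K)$ and $\mathrm{supp}(K')$ to be proper over $X\times I$ under both projections. That is a property of the sheaves, not of $\Lambda_\phi$: a priori a solution could be non-zero, and merely locally constant, on a whole component of the complement of the projection of $\Lambda_\phi$ in $X\times X\times I$. For an arbitrary second solution $K'$, properness has to be derived from $\dot{SS}(K')\subset\Lambda_\phi$, $K'|_{t=0}\simeq\K_\Delta$ and the finite speed of $\phi$ over compact sets, by a non-characteristic deformation argument. GKS establish this for every solution before proving uniqueness; your plan omits it.

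The existence part has genuine gaps. First, your local model is wrong. A constant sheaf on a region $\{\varphi(x,y,t)\geq 0\}$ can only model a positive (or negative) isotopy, and only for $t$ of one sign. Already for the translation $(x,\xi)\mapsto(x+tv,\xi)$ of $\dot{T}^*\R^n$, the kernel is $\K_{\{x=y+tv\}}$, supported in codimension $n$. A generating function for a conic Lagrangian close to $T^*_\Delta(X\times X)\times T^*_II$ needs auxiliary fibre variables. The kernel is then a proper pushforward of a constant sheaf on $\{S\geq 0\}$ (up to shift and cone), with microsupport controlled by the pushforward estimate. That explicit construction is what is needed. Appealing to vanishing Maslov-type obstructions yields at best a microlocal object along $\Lambda_\phi$, not a sheaf on $X\times X\times I$ with $SS(K)\subset\Lambda_\phi\cup 0$ and the right restriction at $t=0$.

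Second, your gluing in $t$, and your fragmentation into cone-supported factors with disjoint supports, rely on a uniform short time. That fails without compact support. In particular, your claim that invariance makes the constants uniform for $\Phi^{eq}$ is false. Deck invariance only controls $\tilde{M}\times[-R,R]$. By Proposition~\ref{betaificationofhamisotop}(3), $\Phi^{eq}$ acts on $s$ by $s\mapsto e^{g(x')-g(x)}s+H_t$. Indeed $f^{eq}=\sigma f_t(\pi(x),\xi/\sigma+s\,dg)$, so $\partial_\sigma f^{eq}$ grows linearly in $|s|$ on the region where $\xi/\sigma+s\,dg$ lies in the support of $df_t$, which is non-empty for every $s$.

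GKS avoid any uniformity by working in two stages. They first treat isotopies equal to the identity outside the part of $\dot{T}^*X$ lying over a compact set, where a uniform short time exists and composition plus uniqueness suffice. For general $\phi$, they cut off the Hamiltonian along an exhaustion of $X$. Finite propagation and uniqueness show that the resulting kernels agree on $U\times X\times J$ for relatively compact $U$ and $J$, and the kernels are then glued along the exhaustion. This is where $D^{lb}$ rather than $D^b$ enters.
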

	
	As a direct consequence of the previous theorem, we have:
	
	\begin{Rem}(theorem 3.7, \cite{Guillermou2012SQHIQNP})
		Let $h_t:T^*M\rightarrow\mathbb{R}$ be a map that is constant outside of a compact set. Let $\phi_t$ be the Hamiltonian isotopy of $(T^*M,\lambda,\beta)$ whose Hamiltonian is $h_t$, and take $\Phi^{eq}$  its equivariant homogeneous lift. Then $\Phi^{eq}$ has a quantization $K^{eq}\in D^{lb}((\tilde{M}\times \R)\times(\tilde{M}\times\R)\times I)$.
	\end{Rem}
	
	Moreover, we have the following:
	
	\begin{Lem}\label{lem3.1}
		With the notations of the previous remark, for any given deck transformation $\alpha$ of $\tilde{M}$, we have $(\alpha_1+\alpha_2)_* K^{eq}\simeq K^{eq}$, with
		\begin{itemize}
			\item $\alpha_1=\alpha\times id\times id$.
			\item $\alpha_2=id\times\alpha\times id$.
		\end{itemize} 
	\end{Lem}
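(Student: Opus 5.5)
Our approach rests on the uniqueness part of Theorem \ref{Quant}: we show that $K':=(\alpha_1\times\alpha_2)_*K^{eq}$ (we read ``$\alpha_1+\alpha_2$'' as the map $\alpha\times\alpha\times id_I$ acting diagonally on both factors of $X\times X\times I$, $X=\tilde{M}\times\R$, with $\alpha$ acting trivially on $\R$) satisfies the two characterizing conditions. It then follows that $K'\simeq K^{eq}$.

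The first condition is the initial value. Since $\alpha\times\alpha\times id$ commutes with restriction to $t=0$, we get $K'_{|t=0}\simeq(\alpha\times\alpha)_*\K_{\Delta_X}$. Because $\alpha\times\alpha$ maps the diagonal $\Delta_X$ diffeomorphically onto itself, this is $\K_{\Delta_X}$. Here $\K_{X\times X}$ in Theorem \ref{Quant} should be read as the constant sheaf on the diagonal, as in \cite{Guillermou2012SQHIQNP}.

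The second condition is the microsupport. For a diffeomorphism $\psi$ we have $SS(\psi_*K)=\psi_\sharp SS(K)$, where $\psi_\sharp$ is the induced cotangent map. So we need $(\alpha\times\alpha\times id)_\sharp\Lambda_{\Phi^{eq}}=\Lambda_{\Phi^{eq}}$, and the zero section is clearly preserved. This reduces to two facts.
\begin{enumerate}
\item $\Phi^{eq}_t$ commutes with the cotangent lift $\alpha_\sharp$ of $\alpha\times id_\R$ on $\dot{T}^*(\tilde{M}\times\R)$. The proof of Proposition \ref{betaificationofhamisotop} notes this. Directly, the Hamiltonian $f^{eq}_t=\sigma\,(\pi_g\circ\rho_{eq})^*f_t$ is $\alpha_\sharp$-invariant, because $\pi_g\circ\rho_{eq}(x,\xi,s,\sigma)=(\pi(x),D\pi_x(\xi/\sigma+s\,dg))$ does not involve $g$ itself, only $dg$, which is deck-invariant. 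Since $\alpha_\sharp$ is exact symplectic, the Hamiltonian vector field is $\alpha_\sharp$-equivariant. On the zero-fibre region $\sigma=0$, item 4 of Proposition \ref{betaificationofhamisotop} gives a translation in $s$, which clearly commutes with $\alpha_\sharp$.
\item The $\tau$-component $-\lambda(\mathcal{V}_t)$ is preserved. Since $\alpha_\sharp$ preserves $\lambda$ and maps $\mathcal{V}_t$ to itself, we have $\lambda_{\alpha_\sharp p}(\mathcal{V}_t)=\lambda_p(\mathcal{V}_t)$. Alternatively, for homogeneous isotopies $\lambda(\mathcal{V}_t)=f^{eq}_t$, which is invariant.
\end{enumerate}
Applying $(\alpha\times\alpha\times id)_\sharp$ to a point $(\Phi^{eq}_t(p),p^a,(t,\tau))$ of $\Lambda_{\Phi^{eq}}$ gives $(\alpha_\sharp\Phi^{eq}_t(p),(\alpha_\sharp p)^a,(t,\tau))=(\Phi^{eq}_t(\alpha_\sharp p),(\alpha_\sharp p)^a,(t,\tau(\alpha_\sharp p)))$. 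This lies again in $\Lambda_{\Phi^{eq}}$, and since $\alpha$ is invertible we get equality.

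Uniqueness in Theorem \ref{Quant} then yields $K'\simeq K^{eq}$. The main obstacle is the first fact, i.e. checking that the extension across $\sigma=0$ and the formula in item 3 are genuinely $\alpha_\sharp$-equivariant. In particular, the coefficients $e^{g(x)-g(x')}$ and $e^{g(x')-g(x)}$ must be deck-invariant, which holds because $g\circ\alpha-g$ is a constant. This in turn requires $\alpha$ to act on the $s,\sigma$ coordinates trivially rather than by rescaling. If instead the natural action rescaled $(s,\sigma)$ by $e^{\mp c}$, we would include that rescaling in $\alpha_1,\alpha_2$ and run the same argument.
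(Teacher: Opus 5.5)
Your proof is correct and follows the same route as the paper: uniqueness of the GKS quantization, combined with the fact that $\Phi^{eq}$ commutes with deck transformations because $f^{eq}$ is deck-invariant. You make explicit the checks the paper leaves implicit, namely the initial condition on the diagonal, the image of $\Lambda_{\Phi^{eq}}$ including its $\tau$-component, and the extension over $\sigma=0$.
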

	Remember that the group of deck transformations is a free $\Z$-module.
	\begin{proof}
		Quantization is unique up to isomorphism, and $\Phi^{eq}$ commutes with the deck transformations since $f^{eq}$  is itself invariant under deck transformations. Therefore, for any given deck transformation $\alpha$, we have $(\alpha_1+\alpha_2)_* K^{eq}\simeq K^{eq}$.
	\end{proof}
	
	The previous remark and lemma ultimately imply that:
	
	\begin{Pro}\label{betakernel} Assume that $I\subset \R$ is a relatively compact open interval centered on $0$.
		Let $\beta'$ be the pullback of $\beta$ to $M\times\R$, and $F$ be a $\beta'$-sheaf. Take $K^{eq}$ as in the previous remark, then $K^{eq}\circ F\in D^{b}(\tilde{M}\times\R\times I)$ is a $\beta''$-sheaf for $\beta''$ the pullback of $\beta$ to $M\times\R\times I$.
	\end{Pro}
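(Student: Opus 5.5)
We have to check the three defining conditions of a $\beta''$-sheaf for $K^{eq}\circ F$, using the characterization in the remark following the conventions: invariance under the deck transformations of $\tilde{M}$ (acting on the first factor of $\tilde{M}\times\R\times I$), compactness of the projection of the support to $M\times\R\times I$, and $\R$-constructibility. Here $F$ is a $\beta'$-sheaf on $\tilde{M}\times\R$, so $\alpha^*F\simeq F$, $\pi(supp(F))$ is compact, and $F$ is $\R$-constructible. We write $K^{eq}\circ F=Rq_{13!}(K^{eq}\otimes q_2^{-1}F)$, with $q_{13},q_2$ the projections from $(\tilde{M}\times\R)\times(\tilde{M}\times\R)\times I$. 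Boundedness (lying in $D^b$ rather than $D^{lb}$) will follow from the constructibility and compactness steps.

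\emph{Step 1 (invariance).} Fix a deck transformation $\alpha$. By Lemma \ref{lem3.1}, $(\alpha_1+\alpha_2)_*K^{eq}\simeq K^{eq}$, where $\alpha_1+\alpha_2$ acts diagonally on both $\tilde{M}$ factors. Base change for $Rq_{13!}$ along the homeomorphism $\alpha\times id_\R\times id_I$, together with the projection formula, gives
\[\alpha^*(K^{eq}\circ F)\simeq \big((\alpha_1+\alpha_2)^{-1}K^{eq}\big)\circ(\alpha^{-1}F)\simeq K^{eq}\circ F.\]
This is formal.

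\emph{Step 2 (support).} Let $\mathrm{pr}:\tilde{M}\times\R\to\R$ be the projection. The support of $F$ lies in $\pi^{-1}(C)$ with $C\subset M\times\R$ compact, so its $\R$-coordinate is bounded. By Proposition \ref{betaificationofhamisotop}(3), $\Phi^{eq}$ moves the $s$-coordinate by $e^{g(x')-g(x)}s+H_t$. Here $H_t$ is bounded because $h_t$ is compactly supported modulo constants, and $g(x')-g(x)$ is bounded because $\tilde{\Phi}$ moves points a bounded distance uniformly in $x$, by deck invariance and compact support of $\phi$. I would then use the standard support estimate for quantizations: from $SS(K^{eq})\subset\Lambda_{\Phi^{eq}}\cup 0$ and $I$ relatively compact, the support of $K^{eq}$ lies in a set $\{(y,y',t):d(y,y')\le C\}$, for a distance that is deck invariant on the $\tilde{M}$ part. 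I expect this to be the main obstacle. I would obtain it by cutting off the Hamiltonian in the fiber direction and invoking the propagation argument of \cite{Guillermou2012SQHIQNP}, applied locally and then transported by deck invariance. This shows that $supp(K^{eq}\circ F)$ lies in a $C$-neighborhood of $supp(F)\times I$, whose image in $M\times\R\times I$ is compact. A subtle point: on the region $\sigma=0$, Proposition \ref{betaificationofhamisotop}(4) only translates $s$ by $w(t)$, so the bound still holds there.

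\emph{Step 3 (constructibility).} $F$ is $\R$-constructible. The microsupport $SS(K^{eq}\circ F)$ is contained in the image of $SS(F)$ under the graph of $\Phi^{eq}$, which is a subanalytic-type Lagrangian only when $\Phi^{eq}$ is analytic, and in general it is not. I would therefore first treat the case of analytic Hamiltonians, or more likely argue as follows. Since everything is deck invariant and $\pi(supp)$ is compact, it suffices to check constructibility over a relatively compact fundamental domain. There, $K^{eq}\circ F$ restricted to each time $t$ is $K^{eq}_t\circ F$, and this composition is proper by Step 2. Constructibility is then obtained by approximating $\phi$ by a piecewise-analytic isotopy, writing $K^{eq}$ as a composition of kernels for short time steps, and using the fact that composition with proper $\R$-constructible kernels preserves $\R$-constructibility [Kashiwara–Schapira, Prop.~8.4.x]. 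If this fails in the smooth category, I would weaken the conclusion to cohomological constructibility with finite-dimensional stalks. By the microlocal Morse lemma (Proposition \ref{MorseFaisceaux}), that weaker property is all that the asymptotic Betti numbers later require.
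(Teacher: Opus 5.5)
Your Steps 1 and 2 follow the paper's proof. Invariance comes from Lemma \ref{lem3.1} by base change. Then a bound on the $\R$-coordinate of $supp(K^{eq}\circ F)$ gives compactness of the projected support, and descent to $M\times\R$ gives boundedness. Boundedness needs only local boundedness, deck invariance and this support bound, not constructibility, so do not route it through Step 3.

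The paper merely asserts the $s$-bound. Your justification is more careful, but the uniform estimate $supp(K^{eq})\subset\{d(y,y')\le C\}$ is false as stated. By Proposition \ref{betaificationofhamisotop}(3), $\Phi^{eq}$ rescales $s$ by $e^{g(x')-g(x)}$, so the displacement in $s$ grows linearly in $|s|$. You only need the estimate over $\{|s|\le c\}$, and there no re-run of the propagation argument is needed:
\begin{itemize}
\item both projections $supp(K^{eq})\rightrightarrows(\tilde{M}\times\R)\times I$ are proper \cite{Guillermou2012SQHIQNP};
\item so over the compact set $\overline{V}\times[-c,c]\times J$ (with $J\subset I$ compact) the other projection of the support is compact, giving a bound $|s|\le c'$;
\item the diagonal deck invariance of $supp(K^{eq})$ from Lemma \ref{lem3.1} does not act on the $\R$-factors, so it spreads this bound over all of $\tilde{M}\times[-c,c]\times J$.
\end{itemize}

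The genuine gap is Step 3, and the paper's proof does not cover it either. It checks only invariance, support and boundedness, and never addresses $\R$-constructibility, although it uses it later (for $\mathcal{K}^k_\pm$ in the proof of Corollary \ref{Guillermoucorollaire1.7}). Your repair does not work. Approximating $\phi$ by a piecewise-analytic isotopy proves constructibility of a different sheaf, and $\R$-constructibility does not pass to such limits. Besides, a nonzero compactly supported Hamiltonian is never analytic.

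In fact the constructibility clause is false for smooth isotopies. Take $h=-S(x)\chi(\xi)$, with $S$ supported in a chart of $M$ and $\chi\equiv 1$ on a large ball. On the zero section the flow is fiberwise translation by $t\,d_\beta S$, so $\phi_t(0_M)$ is the graph of $d_\beta(tS)$. Then $K^{eq}_t\circ\K_{\tilde{M}\times\{0\}}$ is, up to shift, the constant sheaf on the hypersurface $\{s=-tS(\pi(x))\}$. If $S$ behaves like $e^{-1/y_1^2}\sin(1/y_1)$ in the chart, this hypersurface meets $\{s\neq 0\}$ in infinitely many components inside a compact set. It is therefore not subanalytic, and the sheaf is not $\R$-constructible.

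So the proposition needs either a tameness assumption on $\phi$ or your weakened conclusion. The weakening is not as cheap as you claim, because more than Proposition \ref{MorseFaisceaux} relies on constructibility:
\begin{itemize}
\item the proof of Proposition \ref{prop1} uses uniformly finite-dimensional cohomology on relatively compact subanalytic opens, which fails here: $H^0$ on $\{s\neq0\}\cap B$ is infinite-dimensional for a small ball $B$ centered at a point of $\{y_1=0,\ s=0\}$;
\item the proof of Corollary \ref{Guillermoucorollaire1.7} uses that the microsupport lies in a finite union of conormals of subanalytic submanifolds.
\end{itemize}
What survives is $\dot{SS}(K^{eq}_t\circ F)\subset\Phi^{eq}_t(\dot{SS}(F))$, a smooth image of a locally finite, deck-periodic union of conormals. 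Those arguments would have to be redone with $\psi$ and the fundamental domain chosen generic with respect to that set.
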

	\begin{proof}
		Let $q_1:(\tilde{M}\times \R)\times(\tilde{M}\times \R)\times I\rightarrow \tilde{M}\times\R$ be the projection on the first and last factors and $q_2:(\tilde{M}\times \R)\times(\tilde{M}\times \R)\times I\rightarrow (\tilde{M}\times \R)\times I$ be the projection on the second factor. Take $\alpha$, $\alpha_1$, and $\alpha_2$ as in the previous lemma. Then,
		\begin{align*}
			\alpha_{1*}K^{eq}\circ F&=\alpha_{1*}Rq_{2!}(K^{eq}\overset{L}{\otimes}q_1^{-1}F)\simeq Rq_{2!}((\alpha_1+\alpha_2)_* K^{eq}\overset{L}{\otimes}(\alpha_1+\alpha_2)_* q_1^{-1}F)\\
			&\simeq Rq_{2!}( K^{eq}\overset{L}{\otimes} q_1^{-1}\alpha_{*} F)\simeq Rq_{2!}(K^{eq}\overset{L}{\otimes} q_1^{-1}F)=K^{eq}\circ F.
		\end{align*}
		We now have to prove that $K^{eq}\circ F$ is bounded and that the projection of its support is compact. Call $p$ the projection $\tilde{M}\times\R\rightarrow M\times\R$. Since $K^{eq}\circ F$ is equivariant with respect to the deck transformations, it is isomorphic to $p^{-1}G$ for some locally bounded $G\in D^{lb}(M\times\R)$. By hypothesis, there is some constant $c$ such that $\text{supp}(F)\subset \tilde{M}\times]-c,c[$. 
		
		This implies that there is some constant $c'$ such that $\text{supp}(K^{eq}\circ F)\subset \tilde{M}\times]-c',c'[\times I$ and therefore $\text{supp}(G)=\text{supp}(Rp_*K\circ F)$ is a subset of $M\times]-c',c'[$. Since $M\times [-c',c]$ is compact, $G\in D^b(M\times\R)$ and therefore $K\circ F=p^{-1}G$ is bounded, and the projection of its support is compact.
		
	\end{proof}
	
	\begin{Rem}
		This proves the second point of theorem \ref{thm2}.
	\end{Rem}

	However, we are not quite done yet with this subsection, as we now wish to understand how pushing with a homogeneous Hamiltonian isotopy modifies a $\beta$-sheaf. First, let us recall the following proposition:
	
	\begin{Pro}(\cite{Guillermou2012SQHIQNP}, assertion 4.4)\label{PropCompositionKernel}
		Given a manifold $X$, let $F\in D^b(X)$ be with compact support and $K$ be the quantization of some homogeneous Hamiltonian isotopy of $(T^*X,\lambda,0)$, then
		\begin{itemize}
			\item $SS(K\circ F)\cap X\times T^*I\subset X\times I$.
			\item $SS((K\circ F)_{t=t_0})\cap (\dot{T}^*X)\subset\phi_t(SS(F))\cap (\dot{T}^*X)$.
		\end{itemize}
	\end{Pro}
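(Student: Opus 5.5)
The plan is to compute everything with the standard microsupport estimates of Kashiwara--Schapira for tensor products, inverse images and proper direct images. The input is the bound $SS(K)\subset\Lambda_\phi\cup T^*_{X\times X\times I}(X\times X\times I)$ from theorem \ref{Quant}. Write $K\circ F=Rq_{2!}(K\overset{L}{\otimes}q_1^{-1}F)$, where $q_1:X\times X\times I\to X$ is the projection on the second factor and $q_2:X\times X\times I\to X\times I$ forgets it. I would carry out three steps in order: tensor product, direct image, then restriction to $t=t_0$.

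\textbf{Tensor product.} We have $SS(q_1^{-1}F)\subset\{(0,(x,\xi),0):(x,\xi)\in SS(F)\}$. Take a point of $SS(K)\cap -SS(q_1^{-1}F)$. Its first and $I$-components vanish. If it lay in $\Lambda_\phi$ with $\xi\neq0$, we would get $\phi_t(x,\xi)=(y,0)$, which is impossible because $\phi_t$ preserves $\dot T^*X$. So the point lies in the zero section, the non-characteristic condition holds, and
\[SS(K\overset{L}{\otimes}q_1^{-1}F)\subset SS(K)\,\widehat{+}\,SS(q_1^{-1}F).\]
In fact the sum is an honest sum of covectors, since the non-characteristic condition rules out the limiting terms.

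\textbf{Direct image.} I need $q_2$ to be proper on $\operatorname{supp}(K)\cap q_1^{-1}(\operatorname{supp}F)$. This is the main obstacle. My first attempt is to use that $\operatorname{supp}F$ is compact and $I$ is relatively compact. The isotopy moves points of $X$ only a bounded amount over bounded times: it is homogeneous, and in our setting it is a lift of a compactly supported isotopy, by proposition \ref{betaificationofhamisotop}. Hence $\operatorname{supp}(K)\cap(X\times\operatorname{supp}F\times I)$ lies in a compact set; this follows from the construction of $K$ in \cite{Guillermou2012SQHIQNP}, where $\operatorname{supp}K$ is contained in the projection of $\Lambda_\phi$ together with the diagonal. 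Granting properness, the proper direct image estimate gives
\[SS(K\circ F)\subset\{(y,\eta,t,\tau):\exists\,(x,\xi)\in SS(F),\ ((y,\eta),(x,-\xi),(t,\tau))\in SS(K)\}.\]
There are two cases. If $\xi=0$, the point of $SS(K)$ lies in the zero section, so $\eta=0$ and $\tau=0$. If $\xi\neq0$, then $(y,\eta)=\phi_t(x,\xi)\in\dot T^*X$ and $\tau=-\lambda(\mathcal V_t)$. Hence $\eta=0$ forces $\tau=0$, which is exactly the first bullet.

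\textbf{Restriction.} Let $i_{t_0}:X\to X\times I$ be the inclusion. By the first bullet, $SS(K\circ F)\cap T^*_{X\times\{t_0\}}(X\times I)$ is contained in the zero section, so $i_{t_0}$ is non-characteristic for $K\circ F$. The inverse image estimate then gives
\[SS\big((K\circ F)_{t=t_0}\big)\subset\{(y,\eta):\exists\,\tau,\ (y,\eta,t_0,\tau)\in SS(K\circ F)\}.\]
By the case analysis above, every such $(y,\eta)$ with $\eta\neq0$ is of the form $\phi_{t_0}(x,\xi)$ with $(x,\xi)\in SS(F)$. This gives the second bullet.
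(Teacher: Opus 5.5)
Your argument is correct. It is the standard microsupport computation behind this result, which the paper does not prove itself but quotes from Guillermou--Kashiwara--Schapira: a non-characteristic tensor product, a proper direct image along $q_2$, and a non-characteristic restriction to $t=t_0$, all fed by $SS(K)\subset\Lambda_\phi\cup T^*_{X\times X\times I}(X\times X\times I)$.

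The only thing to repair is your ``main obstacle''. Properness of $q_2$ on $\operatorname{supp}(K)\cap q_1^{-1}(\operatorname{supp}F)$ is automatic: this set is closed in $X\times\operatorname{supp}F\times I$, and $q_2$ is proper there as soon as $\operatorname{supp}F$ is compact. So drop the appeal to the $\lcs$ setting, which is not among the hypotheses. Also drop the claim that $\operatorname{supp}K$ lies in the projection of $\Lambda_\phi$ together with the diagonal, which is false in general: for the flow $(x,\xi)\mapsto(x+t\xi/\|\xi\|,\xi)$ on $\R^n$, $K_t$ is supported on the whole ball $\{\|x-y\|\le|t|\}$, not just on the sphere $\{\|x-y\|=|t|\}$ and the diagonal.
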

	
	This proposition, together with proposition \ref{MorseFaisceaux}, yields the following result:
	\begin{Cor}\label{Guillermoucorollaire1.7}
		Let $I=]-a,a[\subset\mathbb{R}$. Take $F\in D^{b}(\tilde{M}\times\mathbb{R})$ a $\beta$-sheaf such that $F_{W_k}$ has compact support for any $k$. Let $\phi$ be a Hamiltonian isotopy of $(T^*M,\lambda,\beta)$ and call $K^{eq}$ the quantization of its equivariant homogeneous lift.
		
		Then, writing $(K^{eq}\circ F)_b:=(K^{eq}\circ F)_{\{t=b\}}$, we have that, for all $s$ and $t$ in $I$, \[c_j((K^{eq}\circ F)_s)= c_j((K^{eq}\circ F)_t).\] 
	\end{Cor}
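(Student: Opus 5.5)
We want to show that the asymptotic Betti numbers $c_j((K^{eq}\circ F)_t)$ are locally constant in $t$, and hence constant on the connected interval $I$. By Proposition~\ref{betakernel}, $H:=K^{eq}\circ F$ is a $\beta''$-sheaf on $\tilde{M}\times\R\times I$. Each restriction $H_t$ is then a $\beta'$-sheaf. By Proposition~\ref{PropCompositionKernel}, $SS(H)\cap (\tilde{M}\times\R)\times T^*I$ lies in the zero-section, so $H$ has no microsupport in the $\tau$-direction except at $\tau=0$.

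Fix $k$ and set $H_k:=H_{(W_k\backslash\partial_-W_k)\times I}$. The plan is to run the non-characteristic deformation lemma, in the form of Proposition~\ref{MorseFaisceaux}, with $m=1$ for the base $\tilde{M}\times\R$ and with the time coordinate added. This should give $R\Gamma(W_k\backslash\partial_-W_k,H_s)$ and $R\Gamma(W_k\backslash\partial_-W_k,H_t)$ isomorphic up to contributions of order $O((2k+1)^{r-1})$.

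Concretely, consider the projection $q:\tilde{M}\times\R\times I\to I$. Since $q$ is proper on $\operatorname{supp}(H_k)$, one can use $R\Gamma(W_k\backslash\partial_-W_k,H_t)\simeq (Rq_*H_k)_t$. It then suffices to show that $Rq_*H_k$ is locally constant on $I$ up to a boundary defect. Using $SS(Rq_*H_k)\subset q_\pi(SS(H_k))$ together with the triangle $H_{W_k\backslash\partial_-W_k}\to H_{W_k}\to H_{\partial_- W_k}\xrightarrow{+1}$, the only possible microsupport of $H_k$ with $\tau\neq0$ lies over $\partial W_k\times I$. It comes from the intersection of $SS(H_t)$ with the conormal of $\partial W_k$ as $t$ varies, that is, from microsupport crossing the boundary of $W_k$ under the isotopy $\Phi^{eq}_t$.

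The main obstacle is controlling these boundary contributions. My plan is to replace $Rq_*H_k$ by the comparison of $H_k$ with $(H_{W_{k+1}\backslash\partial_-W_{k+1}})$ restricted to the fixed time slab. The approach is to imitate the Mayer--Vietoris argument of the lemma proving Proposition~\ref{prop1}: cover a neighborhood of $\partial W_k\times\overline{[s,t]}$ by deck-translates of a single relatively compact subanalytic open set $O_0\times I$. By equivariance of $H$ (Lemma~\ref{lem3.1}) and $\R$-constructibility, each finite intersection carries cohomology of rank bounded by a constant independent of $k$. Since there are $O((2k+1)^{r-1})$ such translates, this gives
\[b_j(H_{k,s})=b_j(H_{k,t})+O_{+\infty}\big((2k+1)^{r-1}\big).\]
Two points must be checked here. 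First, the compactness of $\overline{[s,t]}\subset I$ and the compact support of $\Phi$ should make the displacement of $W_k$ under the isotopy bounded uniformly, so that $O_0$ can be chosen once and for all. Second, away from the boundary, the interior part is handled by the non-characteristic condition in $\tau$ via Kashiwara--Schapira's Proposition 2.7.2.

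Dividing by $(2k+1)^r$ and taking the $\limsup$ gives $c_j(H_s)\le c_j(H_t)$. By symmetry we obtain equality.
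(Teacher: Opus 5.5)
There is a genuine gap, in the step you treat as routine: ``away from the boundary, the interior part is handled by the non-characteristic condition via Kashiwara--Schapira's Proposition 2.7.2.'' Proposition~\ref{PropCompositionKernel} only says that the projection to $I$ is non-characteristic for $H$: covectors of $SS(H)$ with vanishing $(\xi,\sigma)$-part have $\tau=0$. It does not put $SS(H)$ inside $\{\tau=0\}$ over interior points. By Theorem~\ref{Quant}, $SS(H)$ contains covectors with $\tau=-\lambda(\mathcal{V}_t)$, which is the homogeneous Hamiltonian up to sign and is nonzero in general. So $H$ is not locally constant in time on $\operatorname{int}(W_k)\times\R\times I$.

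In fact $R\Gamma(U\times\R;H_s)\not\simeq R\Gamma(U\times\R;H_t)$ in general for an interior open set $U$. The isotopy $\Phi^{eq}$ moves base points $x\mapsto x'$, so the support and microsupport of $H_u$ cross $\partial U$. For instance, let $\phi$ push a small ball of $M$ across one of the hypersurfaces cutting out $V$, and let $F$ be the pullback of the constant sheaf on that ball.

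Proposition 2.7.2 does not apply to the sets $U\times\R\times\,]a,u[$, because its hypotheses fail along $\partial U$. If you work on the whole space, non-characteristicity fails at points of $\partial U\times\R\times\{u\}$. If you work on the open set $U\times\R\times I$, the compactness hypothesis fails instead.

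Consequently your Mayer--Vietoris splitting decouples nothing. Covering the shell by $O((2k+1)^{r-1})$ translates of $O_0$ bounds the shell's contribution to each time slice separately. But the time variation of the interior piece is itself a boundary effect of the same size, and nothing in your plan estimates it. The proposed comparison with $H_{W_{k+1}\backslash\partial_-W_{k+1}}$ is never turned into such an estimate.

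The missing idea is to quantify the failure of non-characteristic deformation on the compact truncation itself. The paper does this with Morse inequalities in the time variable (Lemma~\ref{important}). Write $\mathcal{K}_J$ for the truncation of $\mathcal{K}^k$ to $W_k\times\R\times J$.

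\begin{itemize}
\item Apply the microlocal Morse inequalities (KS 5.4.20, as in Proposition~\ref{MorseFaisceaux}) to $\mathcal{K}_{]s,t]}$ with a function $\psi$ $C^1$-close to the time projection. Apply them to $\mathcal{K}_{[s,t[}$ with $-\psi$.
\item The half-open slabs are chosen so that $SS(\K_{]s,t]})\subset\{\tau\le 0\}$ and $SS(\K_{[s,t[})\subset\{\tau\ge 0\}$. Hence the time ends produce no critical points.
\item Non-characteristicity kills the interior critical points. So all critical points lie over $\partial W_k\times\R$, and equivariance plus constructibility give $b_j(\mathcal{K}^k_\pm)=O((2k+1)^{r-1})$.
\item The triangles $\mathcal{K}_{]s,t]}\to\mathcal{K}_{[s,t]}\to\mathcal{K}_{\{s\}}\overset{+1}{\rightarrow}$ and $\mathcal{K}_{[s,t[}\to\mathcal{K}_{[s,t]}\to\mathcal{K}_{\{t\}}\overset{+1}{\rightarrow}$ then put both slices within $O((2k+1)^{r-1})$ of $b_j(\mathcal{K}_{[s,t]})$.
\end{itemize}

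Dividing by $(2k+1)^r$ and passing to the limit gives $c_j(H_s)=c_j(H_t)$ directly, without any interior/boundary decomposition.
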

	To prove this corollary, first let us state this technical variant of proposition \ref{MorseFaisceaux}:
	\begin{Lem}\label{important} Let $F\in D^b(\tilde{M})$ be a sheaf such that for any deck transformation $\alpha$, $\alpha^*F\simeq F$ and $K^{eq}$ be the sheaf of the previous corollary.
		
		Take $\psi:\tilde{M}\times\R\rightarrow\R$ a map that is $C^1$-close to the projection on the second variable, and define $\Lambda_\psi$ as in the proposition \ref{MorseFaisceaux}. Take $\Lambda_{\pm\psi}=\Lambda_\psi\cup\Lambda_{-\psi}$.
		
		Define $\mathcal{K}^k:=(K^{eq}\circ F)_{W_k\times\R\times I\backslash \partial_-W_k\times\R\times I}$.
		
		Given $s$ and $t$ in $I$ such that $s<t$, define $\mathcal{K}_+^k:=\mathcal{K}_{W_k\times\R\times \{s<x\leq t\}}$ and $\mathcal{K}_-^k:=\mathcal{K}_{W_k\times\R\times \{s\leq x< t\}}$. 
		
		Assume that, for any $k$, the intersection of $\Lambda_{\psi}$ and $SS(\mathcal{K}_+^k)$ is finite. Index the projection of the points $(x^1,\xi^1)\in\Lambda_{\psi}\cap SS(\mathcal{K}_+^k)$ on $\tilde{M}\times\R$  with $I_k$. Write $D^k_i=\{y:\psi(y)\geq\psi(x^1_i)\}$ for every $i\in I_k$.
		
		Assume that, for any $k$, the intersection of $\Lambda_{-\psi}$ and $SS(\mathcal{K}_-^k)$ is finite. Index the projection of the points $(x^2,\xi^2)\in\Lambda_{-\psi}\cap SS(\mathcal{K}_-^k)$ on $\tilde{M}\times\R$ with $J_k$. Write $B^k_i=\{y:\psi(y)\leq\psi(x^2_i)\}$ for every $i\in J_k$.
		
		Assume that:
		\begin{enumerate}
			\item[1.] $\{x^1_i\,:\,i\in I_k\}$ and $\{x^2_i\,:\,i\in J_k\}$ are finite.
			\item[2.] $B^k_i:=R\Gamma_{D^k_i}(\mathcal{K}_+^k)_{|x^1_i}\in D^b(Mod^f(k))$ for all $i\in I_k$.
			\item[3.] $A^k_i:=R\Gamma_{B^k_i}(\mathcal{K}_-^k)_{|x^2_i}\in D^b(Mod^f(k))$ for all $i\in J_k$.
		\end{enumerate}
		Then $c_j((K\circ F)_t)=c_j((K\circ F)_s)$.
	\end{Lem}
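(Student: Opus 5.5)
The plan is to compare the cohomology of the two time slices through the band $\{s\le t'\le t\}$ by microlocal Morse theory in the time variable. Write $\mathcal{K}^k_s$ and $\mathcal{K}^k_t$ for the restrictions of $\mathcal{K}^k$ to $\{t'=s\}$ and $\{t'=t\}$. There are two exact triangles
\[\mathcal{K}^k_+\rightarrow\mathcal{K}^k_{W_k\times\R\times[s,t]}\rightarrow\mathcal{K}^k_s\overset{+1}{\rightarrow},\qquad \mathcal{K}^k_-\rightarrow\mathcal{K}^k_{W_k\times\R\times[s,t]}\rightarrow\mathcal{K}^k_t\overset{+1}{\rightarrow}.\]
By the long exact sequences in cohomology, it is enough to show two things. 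First, $R\Gamma(\mathcal{K}^k_\pm)$ has total Betti number $O_{+\infty}((2k+1)^{r-1})$. Second, the restriction to either slice changes Betti numbers only by such an error. Dividing by $(2k+1)^r$ and taking $\limsup$ then gives $c_j((K\circ F)_t)=c_j((K\circ F)_s)$. Here one uses the equivalent definition of $c_j$ with $W_k\backslash\partial_-W_k$, together with the identification $(K\circ F)_t\simeq (\pi\times id)^{-1}G_t$ from Proposition \ref{betakernel}.

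For the estimate on $\mathcal{K}^k_+$, I apply Proposition \ref{MorseFaisceaux} (i.e. the microlocal Morse inequalities of \cite{Kashiwara1990SheavesOM}, Prop.~5.4.20) with the function $\psi$, which is $C^1$-close to the projection on $\R$. Assumptions 1 and 2 give finiteness of the stalks $B^k_i$, so $R\Gamma(\mathcal{K}^k_+)$ is bounded by $\sum_i b(B^k_i)$. The key input is Proposition \ref{PropCompositionKernel}: $SS(K^{eq}\circ F)\cap (X\times T^*I)\subset X\times I$. So the interior part of $SS(\mathcal{K}^k_+)$ has no covector purely in the $dt'$-direction. The sheaf $\mathcal{K}^k_+$ is the restriction to $\{s<t'\le t\}$, which is open at $s$ and closed at $t$. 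Its extra microsupport at the time boundaries points in the $\mp dt'$ directions, and $\Lambda_\psi$ should be chosen so that it only meets the wrong side. Hence the points $x^1_i$ can only lie over $\partial W_k\times\R\times I$, where the boundary of $W_k$ contributes microsupport.

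The contributions over $\partial W_k$ are then counted as in the proof of Proposition \ref{MorseFaisceaux}. By Lemma \ref{lem3.1} and Proposition \ref{betakernel}, $\mathcal{K}$ is deck-invariant, so these stalks come in at most $cst\times(2k+1)^{r-1}$ isomorphism classes, with the constant read off from the faces of $\mathcal{H}_k$. The same argument with $-\psi$, assumption 3 and the stalks $A^k_i$ handles $\mathcal{K}^k_-$, with the roles of the two ends reversed. Thus $b_j(\mathcal{K}^k_\pm)=O_{+\infty}((2k+1)^{r-1})$.

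Finally, the restriction $\mathcal{K}^k_{W_k\times\R\times[s,t]}\to\mathcal{K}^k_s$ has cone $\mathcal{K}^k_+[1]$, so comparing it with the slices needs only the bound just proved. The main obstacle is the interior non-characteristic step. I need to check carefully that $\Lambda_{\pm\psi}$ does not meet $SS(\mathcal{K}^k_\pm)$ over the interior of $W_k$, including at the ends $t'=s$ and $t'=t$. There the cutoff $\K_{\{s<t'\le t\}}$ adds half-lines of covectors, and the sign conventions have to match the choice of $\psi$ versus $-\psi$. I would first verify this on the model case $F$ constant in time, using $SS(\K_{]s,t]})$, and then perturb, with compactness of supports in the $\R$-direction (Proposition \ref{betakernel}) ensuring the required properness.
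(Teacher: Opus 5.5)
Your proposal follows the paper's proof essentially step for step. It uses the same two excision triangles comparing $\mathcal{K}^k_{W_k\times\R\times[s,t]}$ with the slices at $s$ and $t$, the same use of Proposition \ref{PropCompositionKernel} with the microlocal Morse inequalities for $\psi$ (resp.\ $-\psi$) to push all critical contributions of $\mathcal{K}^k_+$ (resp.\ $\mathcal{K}^k_-$) onto $\partial W_k$, and the same deck-invariance count giving $O_{+\infty}((2k+1)^{r-1})$ before dividing by $(2k+1)^r$.

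The sign check you flag does work out, but not as your ``$\mp dt'$'' suggests. With the Kashiwara--Schapira conventions, the extra covectors of $\K_{]s,t]}$ lie in $\R_{\le 0}\,dt'$ at \emph{both} endpoints, and those of $\K_{[s,t[}$ lie in $\R_{\ge 0}\,dt'$ at both endpoints. This same-side property is exactly why $\Lambda_\psi$ is paired with $\mathcal{K}^k_+$ and $\Lambda_{-\psi}$ with $\mathcal{K}^k_-$.
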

	\begin{proof}
		As we saw during the proof of proposition \ref{betakernel}, if the support of $F_{W_k\times\R}$ is compact, then so is the support of $(K\circ F)_{W_k\times\R\times I}$. Moreover, proposition \ref{PropCompositionKernel} implies that, for $p:\tilde{M}\times\R\rightarrow\R$ the projection, $\Lambda_p\cup\Lambda_{-p}$ only intersects the microsupport of $\mathcal{K}_+^k$ and $\mathcal{K}_-^k$ along $(\partial W_k)\times\R$. 
		
		Combining this observation with the microlocal Morse lemma yields \[(-1)^l\sum_{j\leq l}(-1)^jb_j(\mathcal{K}_+^k)\leq (-1)^l\sum_{j\leq l}(-1)^j\left(\sum_i b_j(B^k_i)\right),\] and \[(-1)^l\sum_{j\leq l}(-1)^jb_j(\mathcal{K}_-^k)\leq (-1)^l\sum_{j\leq l}(-1)^j\left(\sum_i b_j(A^k_i)\right),\] where both of the right-hand sides of the inequalities are proportional to the size of the boundary of $W_k$ since $d\phi$ is close to $ds$.
		
		In particular, for any $j$, both $b_j(\mathcal{K}_+^k)$ and $b_j(\mathcal{K}_-^k)$ are at most some $O_{+\infty} ((2k+1)^{r-1})$. Here, the $(2k+1)^{r-1}$ corresponds to the number of points in the boundary of $\mathcal{H}_k$.
		
		Moreover, given a manifold $X$ and $Z'\subset Z$ two closed subsets, we have the exact sequence for any derived sheaf $G$:
		
		\[\ldots\rightarrow G_{Z\backslash Z'}\rightarrow G_{Z}\rightarrow G_{Z'}\overset{+1}{\rightarrow}\ldots\]
		which implies \[b_j(G_{Z})\leq b_j(G_{Z\backslash Z'})+ b_j(G_{Z'})\leq b_j(G_{Z\backslash Z'})+  b_{j+1}(G_{Z\backslash Z'})+ b_j(G_{Z}).\]
		
		Therefore, taking $Z=W_k\times\R\times\{s\leq x\leq t\}$ and either $Z'=W_k\times\R\times\{s\}$ or $Z'=W_k\times\R\times\{t\}$, and the dividing everything by $(2k+1)^r$ yields:
		\begin{align*}
			\lim_{k\rightarrow+\infty}\frac{b_j(\mathcal{K}_{Z})}{(2k+1)^r}&\leq \lim_{k\rightarrow+\infty}\frac{b_j(\mathcal{K}_{t'})+O_{+\infty}((2k+1)^{r-1})}{(2k+1)^r}=c_j(\mathcal{K}_{t'})\\
			&\leq\lim_{k\rightarrow+\infty}\frac{b_j(\mathcal{K}_Z)+O_{+\infty}((2k+1)^{r-1})}{(2k+1)^r}=\lim_{k\rightarrow+\infty}\frac{b_j(\mathcal{K}_{Z})}{(2k+1)^r}
		\end{align*}
		for $t'=s$ or $t'=t$ depending on which $Z'$ is chosen. This implies: \[c_j((K^{eq}\circ F)_{t})=\lim_{k\rightarrow+\infty}\frac{b_j((K^{eq}\circ F)_Z)}{(2k+1)^r}=c_j((K^{eq}\circ F)_{s})\] since $\mathcal{K}_{t'}=(K^{eq}\circ F)_{W_k\times\R\times \{t'\}\backslash \partial_-W_k\times\R\times \{t'\}}$.
	\end{proof}
	
	Note that if $F$ is $\R$-constructible, then so are $\mathcal{K}_+^k$ and $\mathcal{K}_-^k$ (and, in particular, weakly $\R$-constructible); and both $\mathcal{K}_+^k$ and $\mathcal{K}_-^k$ are with compact supports. This immediately implies that, if $F$ is $\R$-constructible, then the assumptions 2 and 3 of lemma \ref{important}, and that, if $\psi$ satisfies the transversality condition in the lemma, then assumption 1 is also verified. Therefore, we are only left to check the existence of such a $\psi$.
	
	\begin{proof}[proof of corollary \ref{Guillermoucorollaire1.7}]
		Since both $\mathcal{K}_+^k$ and $\mathcal{K}_-^k$ are $\R$-constructible with compact support, there is a finite family of relatively compact subanalytic submanifolds $(X_a)_{a\in_A}$ such that: \[SS(\mathcal{K}_+^k)\cup SS(\mathcal{K}_-^k)\subset\cup_{a\in A}T_{X_a}W_k\times [-c,c]\times I.\]  Thom's jet transversality theorem implies that there is an open dense set of functions $f\in C^\infty(\tilde{M}\times\R\times I)$ for which $\Lambda_f$ is transverse to some $T_{X_a}W_k\times [-c,c]\times I$. Since taking a finite intersection of open dense sets yields an open dense set, there is a map $\psi$ such that the conditions of the lemma are verified.
	\end{proof}
	
	\begin{Rem}
		This proves the third point of theorem \ref{thm2}.
	\end{Rem}

	\begin{proof}[proof of theorem \ref{thm2}]
		The theorem is proven by putting proposition \ref{betaificationofhamisotop}, proposition \ref{betakernel}, and corollary \ref{Guillermoucorollaire1.7} together.
	\end{proof}
	
	\subsection{Links between the different lifts}\label{compLift} Recall the Liouville diffeomorphism:
	
	\begin{align*}
		L:\qquad T^*(\tilde{M}\times\mathbb{R})&\rightarrow T^*(\tilde{M}\times\mathbb{R})\\
		(x,\xi,s,\sigma)&\mapsto (x,\xi+s\sigma dg,e^{-g(x)}s,e^{g(x)}\sigma)
	\end{align*}
	Trivially, for any exact Lagrangian $\Lambda$ in $(T^*M,\lambda_M,\beta)$, $L(C^{eq}(\Lambda))=C^{cl}(\Lambda)$. Moreover, as used in the proof of proposition \ref{betaificationofhamisotop}, \[\begin{array}{ccc}
		T^*\tilde{M}\times\dot{T}^*\mathbb{R}\times I & \overset{\Phi^{eq}_t}{\rightarrow}& T^*\tilde{M}\times\dot{T}^*\mathbb{R}\\
		L\times id\downarrow& \circlearrowleft&\downarrow L\\
		T^*\tilde{M}\times\dot{T}^*\mathbb{R}\times I & \overset{\Phi^{cl}_t}{\rightarrow}& T^*\tilde{M}\times\dot{T}^*\mathbb{R}
	\end{array}\]
	Recall the map $T_c:(x,\xi,s,\sigma)\mapsto(x,\xi,s+c,\sigma)$, and define $D_{c}:(x,\xi,s,\sigma)\mapsto (x,\xi-e^gc\sigma dg,s+e^g c,\sigma)$.
	One can easily verify that $L^{-1}\circ T_c= D_{c}\circ L^{-1}$. Finally, recall that $\Phi^{cl}$ commutes with $T_c$. This implies that $\Phi^{eq}\circ D_c= D_c\circ L^{-1} \circ\Phi^{cl}\circ L=D_c\circ \Phi^{eq}$, where $D_c$ doesn't commute with deck transformations, whereas $\Phi^{eq}$ does. 
	On the other hand, $T_c$ commutes with deck transformations, whereas $\Phi^{cl}$ does not. 
	These observations should be considered in light of the notion of displacement energy for sheaves via the Tamarkin morphism, first considered by Tamarkin in \cite{tamarkin2008microlocalconditionnondisplaceablility}. The definition use in this paper will be the one unsed in \cite{Guillermou2019SheavesAS} (see also section \ref{nonsqueezingsymp} of this paper).  
	
	On the sheaf side of things, by propositions 5.4.4 and 5.4.5 of \cite{Kashiwara1990SheavesOM}, we have that for any derived sheaf $F$ on $T^* \tilde{M}\times T^*\R$ we have $SS(f^{-1}F)=L(SS(F))$ where $f:(x,s)\mapsto(x,e^{g(x)}s)$. This last equality implies $(f^{-1}\oplus f^{-1}\oplus id^{-1}_\R)K^{cl}\simeq K^{eq}$. Similarly, we can note that, for $d_c(x,s)=(x,s+e^gc)$, we have $D_c(SS(F))=SS(d^{-1}_{c}F)$.

	
\section{A sheaf-theoretical proof of the Chantraine-Murphy theorem}\label{sectionChantraineMurphy}
	We will now shift our focus to proving the Chantraine-Murphy theorem. This will be split into two main points: first, we will see that $\beta$-transforms induce isomorphisms of $\beta$-sheaves, then we will see how some sheaves can precisely keep track when the differential of a map intersects its microsupport. All of this will then be used to prove the theorem.
	
	\subsection{Simple sheaves and intersections}
	We will now make an observation about simple sheaves (definition 7.5.4 in \cite{Kashiwara1990SheavesOM}) and the ``conification'' of Lagrangians.
	\begin{Lem}\label{LemmefaisceauSimple}
		Let $\Lambda$ be a $\beta$-exact Lagrangian of $T^*M$ defined around some point $x_0\in T^*M$ et let $\beta'$  be the pullback of $\beta$ on $\tilde{M}\times\mathbb{R}$. Let $F\in D^b(\tilde{M}\times\mathbb{R})$, assume that $F$ is simple along the conification $C^{eq}_\beta(\Lambda)$ at $(x_1,s_1)$ for some $s_1$ and for $x_1$ the preimage of $x_0$ in the interior of $W_0$.
		Take $g$ a primitive of $\beta'$ in $\tilde{M}\times\mathbb{R}$.
		
		Assume that there is a smooth $f:M\times\mathbb{R}\rightarrow\mathbb{R}$ and $\psi=e^{-g}f$ such that for all $(x,s)\in\tilde{M}\times\mathbb{R}$, we have that $d\psi_{(x,s)}\neq0$, and $\Lambda_\psi$ intersects  the conification $C^{eq}_\beta(\Lambda)$ transversely at $(x_1,s_1)$.
		
		Then, $\sum_j dim(H^j(R\Gamma_{\{(x,s)\in W_0\times\mathbb{R}:\psi(x,s)\geq\psi(x_1,s_1)\}}(F)_{|(x_1,s_1)}))=1$.
	\end{Lem}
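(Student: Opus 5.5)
The plan is to reduce the statement to the classical computation of Kashiwara--Schapira for simple sheaves: if $F$ is simple along a conic Lagrangian $\Lambda'$ at $p=(x_1,s_1;\xi_1)$, and $\psi$ is a function with $d\psi(x_1,s_1)=\xi_1$ whose graph $\Lambda_\psi$ meets $\Lambda'$ transversely at $p$, then $R\Gamma_{\{\psi\geq\psi(x_1,s_1)\}}(F)_{(x_1,s_1)}$ is concentrated in a single degree and is one-dimensional. More precisely, by Proposition 7.5.3 and Definition 7.5.4 of \cite{Kashiwara1990SheavesOM}, this stalk is isomorphic to $\K[-d]$ for some shift $d$, with $d$ determined by the Maslov index and $\tau$-inertia of the pair $(\Lambda', \Lambda_\psi)$. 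The displayed sum of dimensions is then exactly $1$.

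First, I will check that the objects match. The set $C^{eq}_\beta(\Lambda)$ is a conic Lagrangian of $\dot{T}^*(\tilde{M}\times\R)$, since $\rho_{eq}=\rho_{cl}\circ L$ with $L$ a homogeneous symplectomorphism and $C^{cl}$ conic. Next, $\Lambda_\psi$ is the graph of $d\psi$, and it lies in $\dot T^*$ because $d\psi\neq0$ everywhere. The transversality hypothesis then gives a point $p=(x_1,s_1;d\psi_{(x_1,s_1)})\in C^{eq}_\beta(\Lambda)\cap\Lambda_\psi$, and transversality there is precisely the setting in which the simplicity of $F$ along $C^{eq}_\beta(\Lambda)$ at $p$ can be read off by the test function $\psi$.

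Second, I will deal with the restriction to $W_0\times\R$. Since $x_1$ lies in the interior of $W_0$, the set $\{(x,s)\in W_0\times\R:\psi\geq\psi(x_1,s_1)\}$ coincides with $\{\psi\geq\psi(x_1,s_1)\}$ on a neighborhood of $(x_1,s_1)$. Because $R\Gamma_Z(F)_{|y}$ depends only on the germ of $Z$ at $y$, the two local cohomologies agree, which is the same observation as the remark following Proposition \ref{MorseFaisceaux}.

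Third, I will apply Proposition 7.5.3 of \cite{Kashiwara1990SheavesOM}, which states that for $F$ simple at $p$ and $\Lambda_\psi$ transverse to $\Lambda'$ at $p$, the stalk $R\Gamma_{\{\psi\geq\psi(x_1,s_1)\}}(F)_{(x_1,s_1)}$ is isomorphic to $\K[-d]$. Since $\K$ is a field, this gives $\sum_j\dim H^j=1$.

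The main obstacle is confirming the hypotheses of that proposition in this nonstandard geometry, and in particular that the intersection point sits in the right part of the cotangent bundle. Simplicity in \cite{Kashiwara1990SheavesOM} is defined for $\R$-constructible or pure sheaves along a smooth conic Lagrangian near a point of $\dot{T}^*$, and the statement only assumes simplicity at $(x_1,s_1)$. I will interpret this as simplicity at the covector $p$. I will also need to confirm that $p$ has nonzero $\sigma$-component, so that it lies in the region where $\rho_{eq}$ is defined and $C^{eq}_\beta(\Lambda)$ is a smooth Lagrangian. I expect to get this from the fact that $\psi$ is $e^{-g}f$, whose $s$-derivative is $e^{-g}\partial_sf$, together with the transversality hypothesis, which already presupposes that the point lies on $C^{eq}_\beta(\Lambda)$.
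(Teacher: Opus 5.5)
Your proposal is correct and takes the same route as the paper. The paper's proof is just the remark that the lemma is a direct consequence of the definition of a simple sheaf (citing Lemma 4.3 of \cite{Guillermou2012SQHIQNP}), which is exactly your appeal to Proposition 7.5.3 and Definition 7.5.4 of \cite{Kashiwara1990SheavesOM}. Your extra steps only make explicit what the paper leaves implicit: you drop the $W_0$ constraint by locality of local cohomology since $x_1\in int(W_0)$, and you check that the intersection point has $\sigma\neq0$, which is in fact automatic because $C^{eq}_\beta(\Lambda)\subset T^*\tilde{M}\times\dot{T}^*\R$ by definition.
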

	\begin{proof} This is a direct consequence of the definition of a simple sheaf (see lemma 4.3 in \cite{Guillermou2012SQHIQNP}).
	\end{proof}

	\begin{Pro}\label{Propositioncontagefaisceaux}
		Let $h_t:T^*M\rightarrow\mathbb{R}$ be a map that is constant outside of some compact. Let $\Phi$ be the Hamiltonian isotopy of $h_t$, take $\Phi^{eq}$ its equivariant homogeneous lift. Let $\beta'$ be the pullback of $\beta$ to $M\times\mathbb{R}$ and $F_0$ be a $\beta'$-sheaf such that $(F_0)_{W_k\times\R}$ has compact support. Take $g$ a primitive of $\beta'$ in $\tilde{M}\times\mathbb{R}$.
		
		Take $S_0=\dot{SS}(F_0)$ and assume there is a subset $S_{0,reg}\subset S_0$ such that $S_{0,reg}$ is the lift $C^{eq}_\beta(\Lambda)$ of an exact Lagrangian $\Lambda$. Assume $F_0$ is simple along $S_{0,reg}$.
		
		Let $f:M\times\mathbb{R}\rightarrow\mathbb{R}$ and $\psi=e^{-g}f$ be the lift of $f$ to $\tilde{M}\times\mathbb{R}$ such that $d\psi$ is never $0$.
		Take $t_0\in I$ and assume that:
		\begin{enumerate}
			\item $\Lambda_\psi\pitchfork\Phi^{eq}_{t_0}(S_0)\subset \Lambda_\psi\pitchfork\Phi^{eq}_{t_0}(S_{0,reg})$
			\item $\Lambda_\psi\pitchfork\Phi^{eq}_{t_0}(S_0)\cap T^*(W_0\times\mathbb{R})\subset T^*int(W_0\times\mathbb{R})$ where $int(\cdot)$ is the interior.
		\end{enumerate}
		Then, $\#\pi_g(\{\Lambda_\psi\pitchfork\Phi^{eq}_{t_0}(S_0)\})\geq \sum_jc_j(F_0)$.
	\end{Pro}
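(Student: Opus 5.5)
Set $F_{t_0}:=(K^{eq}\circ F_0)_{t_0}$, where $K^{eq}$ is the quantization of $\Phi^{eq}$. By Proposition \ref{betakernel}, $F_{t_0}$ is again a $\beta'$-sheaf. By Corollary \ref{Guillermoucorollaire1.7} we have $c_j(F_{t_0})=c_j(F_0)$ for every $j$, so it is enough to prove
\[\#\pi_g(\Lambda_\psi\pitchfork\Phi^{eq}_{t_0}(S_0))\geq \sum_j c_j(F_{t_0}).\]
Proposition \ref{PropCompositionKernel} gives $\dot{SS}(F_{t_0})\subset\Phi^{eq}_{t_0}(S_0)$. Moreover, the quantization of a homogeneous Hamiltonian isotopy preserves simplicity (Guillermou--Kashiwara--Schapira), so $F_{t_0}$ is simple along $\Phi^{eq}_{t_0}(S_{0,reg})$. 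Since $\Phi^{eq}$ commutes with $\rho_{eq}$ and $\tilde{\Phi}$ is the symplectized lift of $\Phi$, this set is the equivariant conification $C^{eq}_\beta(\Phi_{t_0}(\Lambda))$. Here $\Phi_{t_0}(\Lambda)$ is still an exact Lagrangian, because Hamiltonian isotopies preserve the exact $\lcs$ class.

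Next I apply Proposition \ref{MorseFaisceaux} to $F_{t_0}$ with this $\psi=e^{-g}f$, taking $m=1$. Its hypotheses are checked as follows.
\begin{enumerate}
\item Finiteness of $\Lambda_\psi\cap SS((F_{t_0})_k)$: for a fixed $k$ this set lies over the compact set $W_k\times[-c',c']$, and the intersection is transverse by assumption 1.
\item Properness of $\psi$ on supports: the support of $(F_{t_0})_{W_k\times\R}$ is compact, as shown in the proof of Proposition \ref{betakernel}.
\item Finite-dimensionality of the stalks $A^k_i$ and $B^k_i$: this follows from $\R$-constructibility.
\item No intersection over $\partial W_0\times\R$: this is exactly assumption 2.
\end{enumerate}
Since $d\psi\neq0$, the zero section never contributes. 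With $l$ taken large (or small), the proposition then yields $c_j(F_{t_0})\leq \sum_i b_j(A^0_i)$ degree by degree, or at least after summing over $j$. Summing the Morse inequalities over all $j$ gives $\sum_j c_j(F_{t_0})\leq\sum_{i\in I_0}\sum_j b_j(A^0_i)$, where $I_0$ indexes the intersection points lying over $int(W_0)\times\R$.

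By the remark following Proposition \ref{MorseFaisceaux}, each $A^0_i$ is a local Morse group $R\Gamma_{\{\psi\geq\psi(x_i)\}}(F_{t_0})_{|x_i}$. Assumption 1 says each such point lies on $\Phi^{eq}_{t_0}(S_{0,reg})$ and that the intersection there is transverse. Lemma \ref{LemmefaisceauSimple} therefore gives $\sum_j\dim H^j(A^0_i)=1$, so $\sum_jc_j(F_0)\leq\#I_0$.

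It remains to identify $\#I_0$ with $\#\pi_g(\Lambda_\psi\pitchfork\Phi^{eq}_{t_0}(S_0))$. The whole configuration is equivariant: $\psi=e^{-g}f$, $\Phi^{eq}$ commutes with deck transformations, and $F_0$ is invariant. Hence the intersection set is $\mathcal{H}$-invariant, and by assumption 2 every orbit has exactly one representative over $int(W_0)\times\R$. A second source of overcounting is the conic $\R_{>0}$-direction: $\Lambda_\psi$ is a graph, so it meets each ray at most once, but $\pi_g\circ\rho_{eq}$ might identify distinct points over the same base fibre. I will check that points over $int(W_0)$ with distinct images give distinct indices, and that $\#I_0$ is at most the number of images.

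I expect the main obstacle to be the Morse-inequality step. Proposition \ref{MorseFaisceaux} is stated with alternating partial sums, while here I need the unsigned total $\sum_j c_j$. The plan is to extract the weak inequalities $c_j\leq\sum_i b_j(A^0_i)$ from the difference of consecutive alternating inequalities at levels $l$ and $l-1$, before the limsup. If the limsup does not commute with this difference, I will instead use the unsigned bound $b_j((F_{t_0})_k)\leq\sum_i b_j(A^k_i)+\sum_ib_j(B^k_i)$, which comes from the filtration by sublevel sets of $\psi$, and then divide by $(2k+1)^r$.
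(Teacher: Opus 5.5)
Your proposal is correct and follows the paper's proof step for step. Both arguments use invariance of $c_j$ via Corollary~\ref{Guillermoucorollaire1.7}, preservation of simplicity under the quantization kernel, Lemma~\ref{LemmefaisceauSimple} to make each local Morse group one-dimensional, Proposition~\ref{MorseFaisceaux} for the bound, and assumption 2 with deck-equivariance to identify $\#I_0$ with $\#\pi_g(\Lambda_\psi\pitchfork\Phi^{eq}_{t_0}(S_0))$; the paper uses nothing beyond your orbit argument there.

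On the Morse step, you should add, not subtract, the strong inequalities at levels $l$ and $l-1$ for each fixed $k$. This gives $b_j((F_{t_0})_k)\leq\sum_i b_j(A^k_i)+\sum_i b_j(B^k_i)$, and only then do you divide by $(2k+1)^r$ and take the $\limsup$. That order is exactly what the paper's ``direct application'' of Proposition~\ref{MorseFaisceaux} tacitly needs, since the $\limsup$ does not commute with alternating sums.
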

	\begin{proof}
		Take $F_t:=(K\circ F_0)_{|t}$.
		Since $\Phi^{eq}$ is homogeneous, we have that $F_t$ is simple along $\Phi^{eq}_t(S_{0,reg})$ and, according to corollary \ref{Guillermoucorollaire1.7}, $c_j(F_t)=c_j(F_0)$.
		Thanks to lemma \ref{LemmefaisceauSimple}, we can give a lower bound to the number of intersection points since:\[\sum_{z_0\in\Lambda_\psi\pitchfork\Phi^{eq}_{t_0}(S_0)\cap T^*W_0}\sum_j dim(H^j(R\Gamma_{\{z\in W_0\times\mathbb{R}:\psi(z)\geq\psi(z_0)\}}(F_{t_0})_{|z_0}))\]\[=\#\Lambda_\psi\pitchfork\Phi^{eq}_{t_0}(S_0)\cap T^* (W_0\times\mathbb{R}).\]
		 Moreover, direct application of the proposition \ref{MorseFaisceaux} yield: \[\sum_{z_0\in\Lambda_\psi\pitchfork\Phi^{eq}_{t_0}(S_0)\cap T^*( W_0\times\mathbb{R})} dim(H^j(R\Gamma_{\{z:\psi(z)\geq\psi(z_0)\}}(F_{t_0})_{|z_0}))\geq c_j(F_0).\]
		 
		 Finally, from the second hypothesis, \[\#\Lambda_\psi\pitchfork\Phi^{eq}_{t_0}(S_0)\cap T^* (W_0\times\mathbb{R})=\#\pi_g(\{\Lambda_\psi\pitchfork\Phi^{eq}_{t_0}(S_0)\}).\]
	\end{proof}
	\subsection{The Chantraine-Murphy theorem}\label{thechantraine}
	Let us begin the proof of theorem \ref{thm1} by giving a slightly more precise statement.
	\begin{The}(\cite{Chantraine2016ConformalSG})
		Let $M$ be a (non-empty) closed manifold, $\beta\in\Omega^1(M)$ be closed and
		$\Phi$ be a Hamiltonian isotopy of $(T^*M,\lambda,\beta)$, where $\lambda$ is the canonical Liouville form. Assume that $\Phi$ is the identity outside of a compact of $T^*M$.
		
		Let $c=\sum_jdim(HN_j(M,\beta))$ and assume $\Phi_t(T^*_M(M))$ intersects $T^*_MM$ transversely.
		
		Then, for every $t\in I$ for which $\Phi_t(T^*_M(M))$ and $T^*_MM$ are transverse, \[\#\Phi_t(T^*_M(M))\pitchfork T^*_MM\geq c.\]
	\end{The}
	\begin{proof}\label{proof}
		In $M\times\mathbb{R}$, identify $M$ and $M\times\{0\}$, then call $\beta$ the pullback of $\beta$ to $M\times\mathbb{R}$. Take $F_0=k_{\tilde{M}\times\{0\}}$ and notice that $F_0$ is a  $\beta$-sheaf. First, as shown in \cite{currier2025morsenovikovhomologybetacriticalpoints}, note that we have the following inequality for every $j\in\N$: \[c_j(F)\geq rank(HN_j(M,\beta)).\]  Therefore, we are left to prove: \[\#\Phi_t(T^*_M(M))\pitchfork T^*_MM\geq \sum_j c_j(F).\] Let us also point out that $F_0$ is a simple sheaf. Let $\Phi^{eq}$ be the equivariant homogeneous lift of $\Phi$ and $g$ be a primitive of $\beta$ on $\tilde{M}\times\mathbb{R}$. Take $K$ the quantization of $\Phi^{eq}_{t}$ and $F_{-t}:=K\circ F_{|t}$. Since $F_t$ is also a $\beta$-sheaf, we can select $ W_0$ to satisfy the conditions of the proposition \ref{Propositioncontagefaisceaux}.
		
		Let $\psi(x,s)=e^{-g(x)}s$ be a map from $\tilde{M}\times\mathbb{R}$ to $\mathbb{R}$, then it satisfies $d\psi_{(x,s)}=e^{-g(x)}ds-e^{-g(x)}sdg_x$.
		
		Moreover, explicit computations yield:
		\begin{align*}
			& \qquad\qquad\qquad\qquad\qquad\Phi^{eq}_t(T_{\tilde{M}\times\{0\}}\tilde{M}\times\mathbb{R})=\\
			&\biggl\{\bigg(x_1,e^{g(x_0)}\xi_1\sigma_0-e^{g(x_0)-g(x_1)}H_t\sigma_0 dg, H_t,e^{g(x_0)-g(x_1)} \sigma_0\bigg)\,:\\
			&\,(x_1,\xi_1)=\tilde{\Phi}_{t}(x_0,e^{-g(x_0)}\xi_0/\sigma_0), (x_0,\xi_0,s_0,\sigma_0)\in T_{\tilde{M}\times\{0\}}\tilde{M}\times\mathbb{R}\biggr\},
		\end{align*}
		where $\tilde{\Phi}$ is the symplectized lift.
		
		Therefore, intersections occur precisely when
		\begin{enumerate}
			\item $e^{-g(x)}=e^{g(x_0)-g(x_1)} \sigma_0,$
			\item $s=H_t,$
			\item $-e^{-g(x)}sdg=e^{g(x_0)}\xi_1\sigma_0-e^{g(x_0)-g(x_1)}H_t\sigma_0 dg$
		\end{enumerate} 
		Those three equalities directly imply that $-e^{-g(x)}sdg=e^{g(x_0)}\xi_1\sigma_0-e^{-g(x)}sdg$ and therefore $\xi_1=0$.
		 In other words, for each intersection of $\Phi^{eq}_t(T_{\tilde{M}\times\{0\}}\tilde{M}\times\mathbb{R})$ and $\Lambda_\psi$, there is an intersection of $\Phi_t(M)$ and $M$.
		
		Conversely, if $\tilde{\Phi}_t(x_0,\xi_0)=(x,0)$, then for every $\sigma$ we have the equality $\phi^{-1}(\tilde{\Phi}_{-t}(x_0,\xi_0),0,\sigma)=(x,0,0,\sigma)$, and we can find some $\sigma$ such that $e^{-g}\sigma v_t\circ\pi_g\circ\rho_{eq}=e^{-g}$. That is to say that for each intersection of $\Phi_t(M)$ and $M$, there is an intersection of $\Phi^{eq}_t(T_{\tilde{M}\times\{0\}}\tilde{M}\times\mathbb{R})$ and $\Lambda_\psi$.
		
		Therefore, $\Phi^{eq}_t(T_{\tilde{M}\times\{0\}}\tilde{M}\times\mathbb{R})$ has precisely as many (transverse) intersections with $\Lambda_\psi$ as $\tilde{\Phi}_{-t}(T_{\tilde{M}}\tilde{M})$ has (transverse) intersections with the $0$-section. 
		
		Thanks to the proposition \ref{Propositioncontagefaisceaux}, we can conclude that since the intersection is transverse,  \[\#\Phi_t(T^*_M(M))\cap T^*_MM\geq \sum_j c_j(F)\geq c.\]
	\end{proof}
	
	It should be noted that, in \cite{Guillermou2012SQHIQNP}, instead of using the map $\psi$, the map used is $proj_s$, the projection on the $s$ variable. We can observe that $\psi=f^{-1}\circ proj_s$, where $f(x,s)=(x,e^{g(x)}s)$ is the map defined in subsection \ref{compLift}.
	
\section{Non-squeezing theorem for symplectic geometry}\label{non-squeezingsymp}
The purpose of this section is to provide a summary of the arguments and constructions laid out in \cite{Guillermou2019SheavesAS}. None of the results of this section are the author's own. That being said, the generalization to the $\lcs$ setting will heavily rely on those results.

\subsection{Displacement energy}
Let us start with a succinct description of the displacement energy. Given a manifold $M$, and the morphisms $s,q_1: M\times\R^2\rightarrow M\times\R$ and $q_2: M\times\R^2\rightarrow \R$ where $s(x,t_1,t_2)=(x,t_1+t_2)$, $q_1(x,t_1,t_2)=(x,t_1)$ and $q_2(x,t_1,t_2)=t_2$. Then, for any $c\geq 0$, we have the projection 
\begin{align*}
	P_{c}: D(M\times R) &\rightarrow D_{t\geq 0}(M\times R)\\
	F&\mapsto Rs_*\left(q_1^{-1}(F)\otimes q_2^{-1}(\mathbb{K}_{[c;+\infty[})\right)
\end{align*}

Consider that restriction morphism $\mathbb{K}_{[0;+\infty[}\rightarrow \mathbb{K}_{[c;+\infty[}$ induces a morphism $\tau_c: P_{0}\rightarrow P_{c}$, called the Tamarkin morphism.

One should note that $T_{c*}P_0(F)\simeq P_c(F)$ and, if $F\in D_{t\geq 0}(M\times R)$, then $P_{0}(F) \simeq (F)$. In other words, for sheaves in $D_{t\geq 0}(M\times R)$, the Tamarkin morphism $\tau_c(F)$ is a morphism $F\rightarrow T_{c*}F$.

We can now define the displacement energy of a sheaf $F$ as \[e(F)=sup\{c\geq0\;|\;\tau_c(F)\neq 0\}.\]

In classical symplectic geometry, the homogeneisation of a Hamiltonian isotopy commutes with $T_c$, which ultimately allows us to conclude that, if $F\in D_{\sigma\geq 0}(M\times\R)$, $e(F)=e(K_t^{cl}\circ F)$ for all $t$.

\subsection{Parametric non-squeezing for sheaves} The core of the various non-squeezing theorems proven via sheaf theory is the following theorem:

\begin{The}(\cite{Guillermou2019SheavesAS})
	Let $r>0$. Then, for any $F\in D(\R^m\times\R^n)$ such that $SS(F)\subset T^*\R^m\times (\R^n\times\{(\xi_1,\ldots\xi_n)\;|\;\xi_2\leq-|\xi_1|\})$ and $supp(F)\subset \R^m\times (]-r;r[\times\R^{n-1})$, we have $e(F)\leq 4r^2$.
\end{The}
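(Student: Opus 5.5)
The plan is to reduce the statement to the one-dimensional case and then compare $F$ with a translated copy of itself through a homogeneous Hamiltonian isotopy. Throughout, $x_1$ is the coordinate that the support condition bounds, $t=x_2$ is the coordinate on which $P_c$ and $T_c$ act, and $\xi_1,\xi_2$ are their dual variables.

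\emph{Step 1 (reduction).} By hypothesis, $SS(F)$ has no condition in the $\R^m$ factor or in the last $n-2$ coordinates. I would treat these variables as parameters. I expect, but have not checked, that the Tamarkin morphism is compatible with restriction and with proper pushforward in parameter variables. If so, it suffices to bound $e(F)$ when only $(x_1,t)$ is essential.

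\emph{Step 2 (reduced picture).} The conditions $\xi_2\leq-|\xi_1|$ and $\dot{SS}(F)\subset\{\xi_2<0\}$ allow reduction by $\rho_{cl}$. On the zero section this requires the reverse, a sign or orientation convention of $P_c$ that I would have to fix. Dividing by $-\xi_2$, the reduced microsupport lies in the rectangle
\[
R_r=\{(x_1,p)\;:\;|x_1|<r,\ |p|\leq 1\}\subset T^*\R .
\]
In this picture, the Tamarkin morphism $\tau_c(F):F\rightarrow T_{c*}F$ corresponds to a shift of the action by $c$.

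\emph{Step 3 (displacing isotopy).} I would choose a compactly supported Hamiltonian $h_u$ on $T^*\R$ whose flow displaces $R_r$. It should have Hofer norm equal to the claimed bound, or within $\varepsilon$ of it. Since $R_r$ has area $4r$, the usual displacement energy is $4r$. The value $4r^2$ in the statement has to match a normalization of $\xi_2$ relative to the support width, and I would check this when fixing $h_u$.

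I would take $\Phi^{cl}$, the classical homogeneous lift of $h_u$ from proposition \ref{rhoclass}, and its quantization $K^{cl}$ from theorem \ref{Quant}. Two facts about $K^{cl}$ are needed.
\begin{itemize}
\item $K^{cl}$ commutes with $T_c$. This gives $e(K^{cl}_u\circ F)=e(F)$ for all $u$.
\item By proposition \ref{PropCompositionKernel}, $\dot{SS}(K^{cl}_1\circ F)\subset\Phi^{cl}_1(\dot{SS}(F))$. This set is disjoint from $\dot{SS}(F)$ in the reduced picture.
\end{itemize}
The $s$-component of $\Phi^{cl}$ moves by the function $u$ of proposition \ref{rhoclass}, item 3. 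Its oscillation is bounded by the Hofer norm of $h_u$.

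\emph{Step 4 (energy estimate; the main obstacle).} I would show that the $s$-drift bound, together with the microsupport disjointness at $u=1$, forces $\tau_c(F)=0$ once $c$ exceeds the Hofer norm of $h_u$.
\begin{itemize}
\item $\tau_c(F)$ is a morphism $F\rightarrow T_{c*}F$.
\item I would factor it through $K^{cl}_u\circ F$ along the path $u\in[0,1]$. The morphisms $F\rightarrow T_{a_u*}K^{cl}_u\circ F$ would come from the microsupport estimate for $K^{cl}\circ F$ in the $u$ variable, with $a_u$ bounded by the accumulated $\max u-\min u$ of the drift.
\item At $u=1$, I would use the microlocal Morse lemma to show that $\mathrm{RHom}(F,T_{c*}G)$ vanishes for $G=K^{cl}_1\circ F$, because of the disjointness of $\dot{SS}(F)$ and $\dot{SS}(G)$ together with the $\{\xi_2\leq 0\}$ condition on both.
\end{itemize}
This is exactly where the constant $4r^2$ has to emerge. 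The argument must not lose a factor of $2$ between the $s$-drift and the area of $R_r$, and that bookkeeping is the step I expect to be delicate.
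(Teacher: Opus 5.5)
The paper gives no proof of this statement; it is quoted from \cite{Guillermou2019SheavesAS}. Your proposal therefore has to stand on its own, and as written it has genuine gaps.

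\textbf{The constant.} You defer the constant to the choice of $h_u$, and that cannot work. Your Step 2 computation is right: with the cone $\xi_2\leq-|\xi_1|$ and $x_2$ the Tamarkin variable (as you read the statement), the reduced microsupport lies in $R_r$, which has area $4r$. Moreover, the homothety $(x_1,x_2)\mapsto(\lambda x_1,\lambda x_2)$ preserves the cone condition, divides the width of the strip by $\lambda$, and divides $e$ by $\lambda$. So under the hypotheses as written, a bound $e(F)\leq 4r^2$ valid for every $r$ would force $e(F)=0$ for every such $F$. A displacement argument can give at best the area $4r$. The value $4r^2$ is the area of $]-r,r[\times[-r,r]$, i.e.\ it corresponds to the cone condition $|\xi_1|\leq -r\xi_2$. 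This is how the result is used in subsection \ref{nonsqueezingsymp} (square $]-a,a[^2$, bound $4a^2$), and it is the version you should prove.

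\textbf{Step 1.} Drop the slice reduction. Vanishing of a morphism cannot be tested on slices: the nonzero element of $\mathrm{Hom}(\K_{[0,1]},\K_{]0,1[}[1])$ on $\R$ restricts to zero at every point. The reduction is also unnecessary: take $h_u$ depending only on $(x_1,p_1)$ and work on $\R^m\times\R^n$ directly.

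\textbf{Step 4.} This is the main gap. It carries the whole theorem, it is only announced, and the mechanism you propose for it is wrong.
\begin{enumerate}
\item The $s$-shift of proposition \ref{rhoclass}(3) is, up to sign, $\int_0^1(h_u-\langle p,\partial_p h_u\rangle)\circ\tilde\Phi_u\,du$, the $\sigma$-derivative of $\sigma\, h_u\circ\rho_{cl}$ integrated along the flow. Its oscillation is not bounded by the Hofer norm. Take $h=\epsilon\chi\sin(Np)$ with $\chi=1$ on $[-2,2]^2$ and $\epsilon N=1$. For points of $[-1,1]^2$, $p$ is constant along the flow and the $s$-shift is $\epsilon\sin(Np)-p\cos(Np)$, of size about $1$, while $\max h-\min h\leq 2/N$.
\item What controls the shifts is the covector in the isotopy variable. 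By theorem \ref{Quant} and Euler's identity, it equals $-f^{cl}_u=-\sigma\, h_u\circ\rho_{cl}$ on $SS(K^{cl}\circ F)$. After translating $t$ by $\int_0^u\max h_v\,dv$ (resp.\ $\int_0^u\min h_v\,dv$), the microsupport becomes one-signed in $u$. Propagation in $u$ then produces morphisms $F\to T_{a*}G$ and $G\to T_{b*}F$, where $G=K^{cl}_1\circ F$ and $a+b=\int_0^1(\max h_u-\min h_u)\,du$. You then still have to show that their composite is $\tau_{a+b}(F)$.
\item The vanishing $\mathrm{Hom}(F,T_{a*}G)=0$ does not follow from the microlocal Morse lemma and disjointness of $\dot{SS}(F)$ and $\dot{SS}(G)$. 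On $\R_t$, the sheaves $\K_{[0,2[}$ and $\K_{[1,3[}$ have disjoint $\dot{SS}$, yet $\mathrm{Hom}(\K_{[0,2[},\K_{[1,3[})\simeq\K$. You need Tamarkin's separation argument. Disjointness of the \emph{reduced} microsupports makes the pushforward along $x_1$ of $\mathcal{H}om^\star(F,G)$ (proper on supports thanks to the strip) microsupported in $\{\tau=0\}$. It is then constant in $t$, hence zero, because it lies in Tamarkin's category.
\end{enumerate}

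With these ingredients your outline becomes the Tamarkin--Asano--Ike energy--capacity inequality. It then proves the correctly normalized statement, since the displacement energy of $[-r,r]^2$ is $4r^2$. Without them it is a plan rather than a proof.
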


This theorem, combined with the invariance of the displacement energy, yields that for any sheaf $F\in D_{\sigma\geq 0}(\R^m\times\R^n\times\R)$ satisfying the conditions, one cannot have both $e(F)>4r^2$ and $proj_{\R^n}(supp(K_t^{cl}\circ F))\subset ]-r;r[\times\R^{n-1}$ where $K^{cl}$ is the (classical) quantization of some Hamiltonian isotopy of $\R^m\times\R^n$.  

\subsection{Non-squeezing in symplectic geometry}\label{nonsqueezingsymp} For any $x\in\R^n$, take $f_1(x)=\int_0^{\|x\|}\sqrt{1-u^2}du$, $f_2=\pi/2-f_1$ and $V=\{(x,s)\in\R^n\times\R| f_1(x)\leq s< f_2(x)\}$. Then, \[\rho_{cl}(\dot{SS}(k_V))=\left\{(x,\xi)\;|\; z\in Vect(x), \|x\|^2+\|\xi\|^2=1\right\}\cup \{(\underline{0},\xi)\;|\; \|\xi\|\leq 1\}.\]
Since $R\mathcal{H}om(k_V,T_{c*}k_V)\simeq k_{\overline{V\cap T_c V} }$, we can already deduce that $e(F)=\pi/2$. The microsupport can be further simplified by removing the singularities at $(\underline{0},0)$ and $(\underline{0},\pi/2)$. 

We will now proceed to describe how to fix the singularity at $(\underline{0},\pi/2)$. Take $\phi$ a diffeomorphism from a neighborhood of $(\underline{0},\pi/2)\in\R^{n}\times\R$ to a neighborhood of $\underline{0}\in\R^{n+1}$ such that $\phi(V)=\{(x,s):\|x\|<s\}$ and $\phi(T_{\pi/2}V)=\{(x,s):\|x\|\leq -s\}$. Note that $\Psi_t(x,\xi)=(x+t\xi/\|\xi\|,\xi)$ is a homogeneous Hamiltonian isotopy associated to $h(x,\xi)=\|\xi\|$. As such, it has a quantization sheaf noted $K_\Psi\in D^{lb}(\R^{2n}\times\R)$. Then $\phi^{-1}(K_{\psi|\R^n\times\{\underline{0}\}\times\R})[-n]$ extends to a sheaf $F$ which fits into the sequence \[\ldots\rightarrow k_V\overset{\phi[-n]}{\longrightarrow} F\overset{\phi^{-1}}{\longrightarrow}k_{T_{\pi/2}V}[-n]\overset{+1}{\rightarrow}\ldots\]
A simple computation yields that, for $U$ a small enough neighborhood of $0$, \[\rho_{cl}(\dot{SS}(F_{|U}))\subset\left\{(x,\xi)\;|\; z\in Vect(x), \|x\|^2+\|\xi\|^2=1\right\}.\]
However, $F$ still has too big a microsupport at $(\underline{0},0)$ and, now, a new one at $(\underline{0},\pi)$. We can then repeat this process iteratively to find a sheaf $G_0\in D_{\sigma\geq 0}(M\times\R)$ such that \[G_{0|\R^n\times ]k\pi/2:(k+1\pi/2[)}= \left((T_{k\pi/2} )_*F[-kn]\right)_{|\R^n\times ]k\pi/2:(k+1)\pi/2[}.\]
This sheaf verifies $e(G_0)\geq \pi/2$, which would be enough to prove the following:
\begin{Pro}(\cite{Guillermou2019SheavesAS})
	Let $r<1/\sqrt{2}$ and $\overline{D}_r\subset \R^2$ be the closed disc of radius $r$. There is no Hamiltonian isotopy $\phi$ of $(T^*\R^n,\lambda_{\R^n},0)$ such that \[\phi_1\left(\left\{(x,\xi)\;|\; z\in Vect(x), \|x\|^2+\|\xi\|^2=1\right\}\right)\subset \overline{D}_r\times\R^{2n-2}.\]
\end{Pro}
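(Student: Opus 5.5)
We argue by contradiction, using the sheaf $G_0$ constructed above together with the invariance of the displacement energy under classical quantization. Suppose a Hamiltonian isotopy $\phi$ of $(T^*\R^n,\lambda_{\R^n},0)$ satisfies
\[\phi_1\left(\left\{(x,\xi)\;|\; \xi\in Vect(x),\ \|x\|^2+\|\xi\|^2=1\right\}\right)\subset \overline{D}_r\times\R^{2n-2}.\]
The first step is to cut off the Hamiltonian outside a large compact set. This puts us in the setting of proposition \ref{rhoclass} (with $\tilde{M}=\R^n$ and $g=0$). We then take the classical homogeneous lift $\Phi^{cl}$ and its quantization $K^{cl}$ given by theorem \ref{Quant}, and set $G_1:=K^{cl}_1\circ G_0$.

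The second step is to control the microsupport of $G_1$. By proposition \ref{PropCompositionKernel}, $\dot{SS}(G_1)\subset\Phi^{cl}_1(\dot{SS}(G_0))$. By construction of $G_0$, $\rho_{cl}(\dot{SS}(G_0))$ is contained in the Lagrangian sphere $S=\{\xi\in Vect(x),\ \|x\|^2+\|\xi\|^2=1\}$, away from the zero covector part. Since $\rho_{cl}\circ\Phi^{cl}_1=\tilde{\Phi}_1\circ\rho_{cl}$, we get $\rho_{cl}(\dot{SS}(G_1))\subset\phi_1(S)\subset\overline{D}_r\times\R^{2n-2}$. In the first conjugate pair of coordinates $(x_1,\xi_1)$, this says that on the cone $\sigma>0$ the covector $(x_1,\xi_1/\sigma)$ lies in $\overline{D}_r$.

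The third step, which I expect to be the main obstacle, is to turn this conic microsupport bound into the hypotheses of the parametric non-squeezing theorem of \cite{Guillermou2019SheavesAS}. Those hypotheses are a support condition $supp\subset\,]-r';r'[\times\R^{n-1}$ and a cone condition $\xi_2\leq-|\xi_1|$ in suitable coordinates. I would apply the linear symplectic change of variables used by Guillermou, which exchanges $x_1$ with a mixture of $x_1$ and $\xi_1$ so that the disc $\overline{D}_r$ becomes a slab. Rotating the disc coordinates so that $|x_1|<r'$ translates into a support condition after a Fourier–Sato or quantized linear transformation, which preserves $e$. The cone condition should come from $\sigma\geq0$ together with the bound $|\xi_1|\leq r\sigma$. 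I would also use a homogeneous Hamiltonian isotopy shift in $s$ to reduce to a sheaf in $D_{\sigma\geq0}$, for which the Tamarkin morphism is a morphism $F\to T_{c*}F$. The invariance of $e$ under this linear transformation is where care is needed; it follows from the transformation commuting with $T_c$.

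The final step compares energies. Since $\Phi^{cl}$ commutes with $T_c$, we have $e(G_1)=e(G_0)\geq\pi/2$. The parametric theorem instead gives $e(G_1)\leq 4r^2$, with $r'=r$ up to the normalisation of the theorem. The inequality $r<1/\sqrt2$ gives $4r^2<2$. The constants must then be matched against $\pi/2$, by rescaling or by using $\overline{D}_r$ of area $\pi r^2<\pi/2$, to produce the contradiction $\pi/2\leq e(G_1)<\pi/2$. Checking that this normalisation yields exactly the threshold $1/\sqrt2$ is the remaining bookkeeping.
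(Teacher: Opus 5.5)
Your overall architecture matches the paper's: cut off the Hamiltonian, take the classical homogeneous lift and its quantization, set $G_1=K^{cl}_1\circ G_0$, and compare $e(G_1)=e(G_0)\geq\pi/2$ with the parametric bound. However, there is a genuine gap in what you call the remaining bookkeeping, and this is exactly where the threshold $1/\sqrt2$ comes from.

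If you read the hypotheses of the parametric theorem directly off $\overline{D}_r$, you only learn that $\rho_{cl}(\dot{SS}(G_1))$ lies over $\{|x_1|\leq r\}$ with $|\xi_1|\leq r\sigma$. That is the square $[-r,r]^2$ circumscribing the disc, so the theorem gives $e(G_1)\leq 4r^2$. This contradicts $e(G_1)\geq\pi/2$ only when $4r^2<\pi/2$, i.e. $r<\sqrt{\pi/8}$, which is strictly weaker than $r<1/\sqrt2$; the fact that $4r^2<2$ is irrelevant.

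Neither of your remedies can recover the factor $4/\pi$. A rescaling multiplies $e(G_0)$ and the area of the disc by the same factor. A linear symplectic map sends $\overline{D}_r$ to an ellipse of area $\pi r^2$ whose coordinate bounding rectangle still has area at least $4r^2$ (Hadamard's inequality), so after such a change the theorem still gives at best $4r^2$. Also, the remark that a linear change turns the disc into a slab does not help: the disc already lies in $\{|x_1|\leq r\}$, and what is needed is a box in the $(x_1,\xi_1)$-plane.

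The missing idea is a \emph{nonlinear} area-preserving correction.
\begin{enumerate}
\item Since $\pi r^2<\pi/2$, pick $a$ with $\pi r^2<4a^2<\pi/2$. Take a compactly supported Hamiltonian isotopy $\psi$ of the $(x_1,\xi_1)$-plane, extended by the identity, with $\psi(\overline{D}_r\times\R^{2n-2})\subset\,]-a,a[^2\times\R^{2n-2}$; in dimension two only area obstructs this.
\item Replace $\phi$ by $\psi\circ\phi$, which is still a Hamiltonian isotopy. You now get $\rho_{cl}(\dot{SS}(G_1))\subset\,]-a,a[^2\times\R^{2n-2}$.
\item No Fourier--Sato transform, linear change of variables or extra shift in $s$ is needed. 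The cone condition is just $|\xi_1|<a\sigma$. The support condition holds because $G_0$, hence $G_1$, vanishes for $|x_1|$ large, while $\dot{SS}(G_1)$ lies over $\{|x_1|<a\}$. So $G_1$ is locally constant, hence zero, on $\{|x_1|>a\}$.
\item The parametric theorem then gives $e(G_1)\leq 4a^2<\pi/2\leq e(G_0)=e(G_1)$, the desired contradiction.
\end{enumerate}
This is the argument in the paper.
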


Indeed, first, we need to observe that there is a Hamiltonian isotopy $\psi$ such that $\psi(\overline{D}_r\times\R^{2n-2})\subset ]-a,a[^2\times\R^{2n-2}$ for some $a$ such that $(2a)^2<\pi/2$. Therefore, for $\Phi$ the classical homogeneous lift of $\phi$, we may assume that \[\rho_{cl}(\Phi_1(SS(G_0)))\subset ]-a,a[^2\times\R^{2n-2}.\] Since, for $x_1$ big enough, $(G_0)_{(x_1,\ldots,x_n,s)}\simeq0$, the same must hold for $K^{cl}_t\circ G$, and therefore $SS(K_t\circ G_0)\subset ]-a;a[\times\R^{2n-1}$ implies \[supp(K_t\circ G_0)\subset ]-r;r[\times\R^{2n-1}.\] In turn, this implies that $e(G_0)\leq 4a^2<\pi/2$, which is a contradiction. 

To get this proposition for $r<1$, we need to build a sheaf $G$ such that $e(G)\geq\pi$. Consider the map $(x,\xi)\mapsto x+i\xi$, giving a correspondence $T^*\R^n\simeq\C^n$. Under this correspondence, the image of the map $(s,\xi)\in(\R/\Sph)\times \Sph_1^n\mapsto e^{2\pi si}\cdot (0,\xi)$ is \[L_0:=\left\{(x,\xi)\;|\; z\in Vect(x), \|x\|^2+\|\xi\|^2=1\right\}.\] This map can be extended to $j:(s,\xi,x)\mapsto e^{2\pi si}\cdot (x,\xi(1-\|x\|/\|\xi\|))$. Viewing $x\in \mathbb{B}_1(\R^n)$ as a parameter, $j_x$ defines an isotopy, also called $j$, of isotropic embeddings of $L_0$. Therefore, there is a compactly supported contact isotopy $\psi$ such that $\psi_x\circ j_0=j_x$. This contact isotopy can, in turn, be lifted to a homogeneous Hamiltonian isotopy of $T^*(M\times\R)$, via a lift similar to the classical homogeneous lift. This homogeneous Hamiltonian isotopy can then be quantized by a sheaf $K$ since the parameter space does not matter in the quantization process. And, finally, we can define $G=K\circ G_0\subset D(\R^n\times\R^n\times\R)$. By construction, $SS(G_{|\{y\}\times\R^n})\subset j_y(L_0)$ for any $y$ in the unit ball of $\R^n$ and a more in-depth study of $G$ shows that, indeed, $e(G)\geq \pi$.

\section{Non-squeezing theorem for $\lcs$ geometry}\label{non-squeezing}

As noted in remark \ref{bertelson&co1}, both \cite{bertelson2026nonsqueezingglobalrigidityresults} and this paper begin the study of ``$\lcs$'' Hamiltonian isotopies $\phi$ of $(T^*M,\lambda_M,\beta)$ by taking their symplectized lifts $\tilde{\Phi}$ in $(T^*\tilde{M},\lambda_{\tilde{M}},0)$. However, where the authors of \cite{bertelson2026nonsqueezingglobalrigidityresults} proceed to study $gr_{\lcs}(\phi)$, the projection of the graph of $\tilde{\Phi}_t$ in $T^*{M}\boxtimes T^*M$, we instead elect to quantize the homogeneous equivariant lift. This approach still allows us to recover $gr_{\lcs}(\phi_t)$ via the following remark:
 
\begin{Rem}\label{bertelson&co}
	Recall the definition of $\Lambda_{\Phi^{eq}}$ in theorem \ref{Quant}, and define \[\Lambda_{\Phi^{eq}_t}= \Lambda_{\Phi^{eq}}\cap T^*\left((\R^1_z\times\R^{2n})^2\times\{t\}\right).\]
	We can note that $\Lambda_{\Phi^{eq}_t}$ is an exact conic Lagrangian submanifold of $T^*\left((\tilde{M}\times\R)^2\right)$. Note $\tilde{\pi}_g(x,\xi):=(x,e^{g(x)}\xi)$ the lift of $\pi_g$ to the cover and define the symmetry: \[\text{sym}:(x_1,\xi_1,x_2,\xi_2)\in T^*M_1\times T^*M_2\mapsto(x_2,\xi_2,x_1,\xi_1)\in T^*M_2\times T^*M_1.\] Then, taking the definition of $gr_{\text{lcs}}$ from \cite{bertelson2026nonsqueezingglobalrigidityresults}, \[\left[\text{sym}\circ\left(\tilde{\pi}_g\times{\pi}_g\right)\circ\left(\rho_{eq}\times\rho_{eq}\right)\left(\Lambda_{\Phi^{eq}_t}\right)\right]=gr_{\text{lcs}}(\phi_t),\]
	where the bracket denotes the class in $M$, as defined in \cite{chantraine2024productslocallyconformalsymplectic}
\end{Rem}

It should be noted that $gr_{\lcs}(\phi_t)$ can be define for $\lcs$ Hamiltonian isotopies of any manifold (as opposed to only cotangent bundles), although spectral selectors are only defined (for now) for compactly supported Hamiltonian diffeomorphisms of $\Sph^1\times\R^{2n}\times\Sph^1$ and $\Sph^1\times\R^{2n+1}$. As we will see in subsection \ref{invariance}, those can be studied as Hamiltonian isotopies of $T^*(\Sph^1\times\R^{n})$.

Nevertheless, the unique value proposition of $\Phi^{eq}$ to study non-squeezing in $\lcs$ geometry is that we can leverage both the classical results for homogeneous ``symplectic'' Hamiltonian isotopies, and the equivariance.

\subsection{Invariances and equivariances of the homogeneous lifts}\label{invariance}
The generalization of the non-squeezing to the $\lcs$ setting necessitates additional considerations. Indeed, for $f:(x,s\mapsto (x,e^gs)$ and $F$ a $\beta$-sheaf, either $e(f^{-1}F)$ is either $0$ or $+\infty$. On the other hand, the equivariant lift does not commute with $T_c$ and, therefore, $e(K_t^{eq}\circ F)$ depends on $t$ in general. Nevertheless, considering the specific equivariances of some isotopies can yield some results.

Let $f_t$ be a compactly supported Hamiltonian on $(\R^{2n}\times\Sph^1_\zeta\times\Sph^1_z,\lambda_{\R^n}+d\zeta,dz)$. By slight abuse of notation, we will also call $f_t$ its lift to $T^*(\R^n\times\Sph^1_z)$. We can observe that $d_{dz}(\lambda_{\R^n}+d\zeta)=d_{dz}\lambda_{\R^n\times\Sph^1_z}$. Call $\Phi$ the Hamiltonian isotopy of $(T^*(\R^n\times\Sph^1_z),\lambda_{\R^n\times\Sph^1_z},dz)$ associated to $f_t$. Take note of the obvious invariance $f(x,\xi,z,\zeta+1)=f(x,\xi,z,\zeta+1)$.

First, set: \begin{enumerate}
	\item $\Phi_t^{cl}(x,\xi,z,\zeta,s,\sigma)=(x^{cl},\xi^{cl},z^{cl},\zeta^{cl},s^{cl},\sigma^{cl})$;
	\item $\Phi_t^{eq}(x,\xi,z,\zeta,s,\sigma)=(x^{eq},\xi^{eq},z^{eq},\zeta^{eq},s^{eq},\sigma^{eq})$.
\end{enumerate} Recall the trivial equivariances, for any constant $c$: \begin{align*}
&\Phi_t^{cl}(x,\xi,z,\zeta,s+c,\sigma)=(x^{cl},\xi^{cl},z^{cl},\zeta^{cl},s^{cl}+c,\sigma^{cl})\\
\text{and }&\Phi_t^{eq}(x,\xi,z+1,\zeta,s,\sigma)=(x^{eq},\xi^{eq},z^{eq}+1,\zeta^{eq},s^{eq},\sigma^{eq}),
\end{align*}  Since $ \partial_t\Phi_t^{eq}(x,\xi,z,\zeta,s+1,\sigma)$ is the $\omega$-dual of \[ d(\sigma f(x,\xi,t[mod(\Z)],\frac{\zeta}{\sigma}-(s+1)))=d(\sigma f(x,\xi,t[mod(\Z)],\frac{\zeta}{\sigma}-s)),\] we deduce that $\Phi_t^{eq}(x,\xi,z,\zeta,s+1,\sigma)=(x^{eq},\xi^{eq},z^{eq},\zeta^{eq},s^{eq}+1,\sigma^{eq})$. Following the discussion of subsection \ref{compLift} and \cite{Guillermou2019SheavesAS}, we conclude that, for $k\in \N$, $T_{k*}(K^{eq}_t\circ F)\simeq K^{eq}_t\circ T_{k*} F$. Since, for $r\geq k$, the Tamarkin morphism $F\rightarrow T_{r*}F$ factors through $F\rightarrow T_{k*}F$, we obtain the following lemma:

\begin{Lem}\label{ref}
	Let $\phi$ be a Hamiltonian isotopy of $(T^*R^n\times\Sph_z^1,\lambda_{\R^n\times\Sph_z^1},dz)$, and $F$ be a $dz$-sheaf. Assume that $\phi$ commutes with the translation $(x,\xi,s,\zeta)\mapsto (x,\xi,s,\zeta+1)$. Then, for any $k\in\N$ and any $t$, \[e(F)> k\implies e(K^{eq}_t\circ F)\geq k.\]
\end{Lem}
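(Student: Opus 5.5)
The plan is to combine the integer-translation equivariance established just before the statement with the factorization property of the Tamarkin morphism. The standing assumptions are that $F\in D_{\sigma\geq 0}$ and that $K^{eq}_t\circ F$ stays in $D_{\sigma\geq0}$. The latter holds because $\Phi^{eq}_t$ is homogeneous, preserves the sign of $\sigma$ (it multiplies $\sigma$ by the positive factor $e^{g(x)-g(x')}$), and is the identity near $\sigma=0$ up to $s$-translation, by item 4 of proposition \ref{betaificationofhamisotop}. So $\tau_c$ on these sheaves is a morphism $G\to T_{c*}G$.

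\textbf{Step 1.} Fix $k\in\N$ with $e(F)>k$. By definition of $e$, there is $r>k$ with $\tau_r(F)\neq0$. The restriction $\K_{[0,+\infty[}\to\K_{[r,+\infty[}$ factors through $\K_{[k,+\infty[}$, so $\tau_r(F)$ is the composite of $\tau_k(F)$ with $T_{k*}\tau_{r-k}(F)$. Hence $\tau_k(F)\neq0$.

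\textbf{Step 2.} Transport $\tau_k(F)$ through the kernel. I use the isomorphism $T_{k*}(K^{eq}_t\circ F)\simeq K^{eq}_t\circ T_{k*}F$, obtained from $\Phi^{eq}_t(\ldots,s+1,\sigma)=(\ldots,s^{eq}+1,\sigma^{eq})$ by iterating $k$ times and invoking uniqueness of the quantization (theorem \ref{Quant}), exactly as in lemma \ref{lem3.1}. I then show that under this isomorphism, $K^{eq}_t\circ\tau_k(F)$ is identified with $\tau_k(K^{eq}_t\circ F)$. The point is that $\tau_k$ is induced by convolution in the $s$-variable with $\K_{[0,\infty[}\to\K_{[k,\infty[}$, and kernel composition commutes with this convolution, using the associativity of $Rs_*$, $\otimes$ and $Rq_{2!}$. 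The one thing to check is that the equivariance isomorphism of $K^{eq}_t$ is compatible with the $s$-convolution structure. For this I would rather use the characterization $T_{k*}K^{eq}\simeq(T_k\times T_k)_*K^{eq}$ as a sheaf statement and run the convolution through it.

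\textbf{Step 3.} Conclude by invertibility of $K^{eq}_t$. Composition with $K^{eq}_t$ is an equivalence, with inverse the quantization of $(\Phi^{eq}_t)^{-1}$, so $K^{eq}_t\circ\tau_k(F)\neq0$. Therefore $\tau_k(K^{eq}_t\circ F)\neq0$, and $e(K^{eq}_t\circ F)\geq k$.

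The main obstacle is Step 2: making the naturality of $\tau_k$ with respect to $K^{eq}_t\circ(-)$ rigorous. Only integer shifts are available, so one cannot differentiate in $c$ as in the classical case. One must also make sure that the isomorphism $T_{k*}K^{eq}_t\circ F\simeq K^{eq}_t\circ T_{k*}F$ sends the canonical morphism to the canonical morphism, rather than to some unrelated nonzero map. I would handle this by working with the kernel $K^{eq}$ over all of $I$: at $t=0$, $K^{eq}_0\simeq\K_\Delta$ and the identification is tautological. The space of automorphisms of $K^{eq}$ restricted to $t=0$ is rigid (constant along $I$, via the microsupport bound of $K^{eq}$), which propagates the compatibility to all $t$.
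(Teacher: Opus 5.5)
Your proposal follows the paper's argument: the $s\mapsto s+1$ equivariance of $\Phi^{eq}$ yields $T_{k*}(K^{eq}_t\circ F)\simeq K^{eq}_t\circ T_{k*}F$, and the factorization of $\tau_r$ through $\tau_k$ then concludes. Your Steps 2 and 3 just make explicit the compatibility with the Tamarkin morphism and the invertibility of $K^{eq}_t\circ(\cdot)$, which the paper leaves implicit.

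One caution on Step 2. The convolution-associativity justification is not available, because $K^{eq}$ commutes only with integer $s$-shifts and is therefore not a convolution kernel in $s$. In your $t$-family fallback, the rigid object is $\mathrm{Hom}(K^{eq}\circ F,T_{k*}(K^{eq}\circ F))\simeq\mathrm{Hom}(K^{eq}\circ F,K^{eq}\circ T_{k*}F)$ over $I$, not the automorphisms of $K^{eq}$: its restriction to each slice $\{t\}$ is an isomorphism, since the $dt$-components of the two microsupports cancel. Consequently $\tau_k(K^{eq}\circ F)$, which restricts to $\tau_k(F)\neq0$ at $t=0$, is nonzero at every $t$.
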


Note that the sheaf $G$ defined in subsection \ref{nonsqueezingsymp} can be straightforwardly pulled back to an equivariant sheaf on $\R^{2n}\times\R\times\R$. Moreover, one can easily verify that, if $V$ is scaled by a factor $R_1$ (that is to say, we redefine $f_1(x)=\int_0^{\|x\|}\sqrt{R_1^2-u^2}du$ and $f_2=R_1^2\pi/2-f_1$), then $e(G)\geq \pi R_1^2$. Finally, recall that, for $R_2>0$, there is a Hamiltonian isotopy $\psi$ such that $\psi(\overline{D}_{R_2}\times\R^{2n-2})\subset]-a,a[^2\times\R^{2n-2}$,  for some $a$ such that $4a^2<\pi R_2^2$.

Those considerations come together to show the following:

\begin{Lem}
Let $R_1$ and $R_2$ be such that, for some $k\in \N$, $\pi R^2_1\geq k\geq \pi R^2_2$. Then there is no compactly supported Hamiltonian isotopy $\phi$ of $(T^*\R^{n}\times T^*\Sph^1_z,\lambda_{\R^n\times\Sph^1_z},dz)$ such that $\phi$ commutes with the translation $(x,\xi,z,\zeta)\mapsto(x,\xi,z,\zeta+1)$ and \[\phi\left(\overline{\mathbb{B}_{R_1}^{2n}\times T^*\Sph^1_z}\right)\subset D_{R_2}\times\R^{2n-2}\times T^*\Sph^1_z.\]
\end{Lem}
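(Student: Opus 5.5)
We argue by contradiction, adapting the sheaf-theoretic proof of symplectic non-squeezing recalled in section \ref{nonsqueezingsymp}. The new ingredient is Lemma \ref{ref}, which replaces the full invariance $e(F)=e(K^{cl}_t\circ F)$ by the weaker integer-level statement $e(F)>k\Rightarrow e(K^{eq}_t\circ F)\geq k$. So suppose $\phi$ is a compactly supported Hamiltonian isotopy as in the statement, commuting with the translation $\zeta\mapsto\zeta+1$ and squeezing $\overline{\mathbb{B}^{2n}_{R_1}\times T^*\Sph^1_z}$ into $D_{R_2}\times\R^{2n-2}\times T^*\Sph^1_z$.

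\textbf{Step 1: the test sheaf.} Take the sheaf $G$ of section \ref{nonsqueezingsymp}, built from the domain $V$ scaled by $R_1$, so that $e(G)\geq\pi R_1^2$. Its reduced microsupport satisfies $\rho_{cl}(\dot{SS}(G))\subset \overline{\mathbb{B}^{2n}_{R_1}}$, up to the parameter directions. Pull $G$ back along the projection forgetting the $z$ variable to get $F$ on $\R^n\times\R_z\times\R$, where $\R_z$ is the integral cover of $\Sph^1_z$. This $F$ is invariant under $z\mapsto z+1$ and is $\R$-constructible. It does not have compact support in the base directions, so I would cut it off in $z$ to get a genuine $dz$-sheaf, or simply note that only equivariance is used. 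I then check that $\rho_{eq}(\dot{SS}(F))$ lies in the preimage of the ball times $T^*\Sph^1_z$, which is a direct computation using $dg=dz$, and that $e(F)=e(G)\geq\pi R_1^2\geq k$. If equality $\pi R_1^2=k$ blocks the strict inequality needed in Lemma \ref{ref}, I would slightly enlarge $R_1$ inside the region where $\phi$ still squeezes, using the openness of the squeezing condition and compactness of the support.

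\textbf{Step 2: transport.} Apply Lemma \ref{ref} to $K^{eq}_1\circ F$ to get $e(K^{eq}_1\circ F)\geq k$. By Proposition \ref{PropCompositionKernel} and the commutative diagram of Proposition \ref{betaificationofhamisotop}, $\rho_{eq}(\dot{SS}(K^{eq}_1\circ F))\subset\tilde{\Phi}_1(\rho_{eq}(\dot{SS}(F)))$. This set projects into $D_{R_2}\times\R^{2n-2}$ in the $\R^{2n}$ factor.

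\textbf{Step 3: the energy bound.} Compose $\phi$ with the symplectic (hence dz-Hamiltonian) isotopy $\psi$ sending $\overline{D}_{R_2}\times\R^{2n-2}$ into $]-a,a[^2\times\R^{2n-2}$ with $4a^2<\pi R_2^2\leq k$. This $\psi$ is independent of $z,\zeta$, so it commutes with the $\zeta$-translation. As in the symplectic proof, the sheaf vanishes for large $x_1$, and the cone condition on the microsupport from the first coordinate pair gives $supp\subset\,]-a,a[\times\cdots$ and the conic condition $\xi_2\leq-|\xi_1|$ after a rotation. The parametric non-squeezing theorem then gives $e(K^{eq}_1\circ F)\leq 4a^2<k$, which is a contradiction.

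The main obstacle is this last step. One has to check that the twisted map $\rho_{eq}$, which involves $e^{-g}$ and the term $s\,dg$, still lets the $x_1,\xi_1$ coordinates of the microsupport control the support in the $x_1$ direction. Here $dg=dz$ only touches the $z$-covector, so the $\R^{2n}$ components of $\rho_{eq}$ and $\rho_{cl}$ agree up to the factor $e^{-z}$. I would handle this by noting that the relevant coordinates are $\xi/\sigma$ rescaled by a positive function of $z$, which does not change the cone condition. The remaining task is to make sure the bound $4a^2$ is not distorted by that factor, which I would try to absorb by working on a fundamental domain in $z$.
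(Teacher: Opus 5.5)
Your proposal follows the paper's proof. Like the paper, you argue by contradiction using Lemma \ref{ref}, the rescaled test sheaf $G$ pulled back equivariantly (with $e\geq\pi R_1^2$), a squeezing map $\psi$ into $]-a,a[^2\times\R^{2n-2}$ with $4a^2<\pi R_2^2\leq k$, and the parametric upper bound on the energy. You also handle the borderline case $\pi R_1^2=k$ as the paper does, by enlarging $R_1$ to some $R_4$ with $\pi R_4^2>k$. The paper additionally shrinks to a closed disc $\overline{D}_{R_3}$ with $R_3<R_2$; applying $\psi$ to $D_{R_2}\subset\overline{D}_{R_2}$, as you do, serves the same purpose.

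The one thing to correct is your ``main obstacle''. It comes from looking at $\rho_{eq}$ alone, and your proposed remedy would not have worked. The squeezing hypothesis concerns $\phi$ on $T^*(\R^n\times\Sph^1_z)$, and the two commutative diagrams give $\pi_g\circ\rho_{eq}\circ\Phi^{eq}_t=\phi_t\circ\pi_g\circ\rho_{eq}$. With $g=z$,
\[\pi_g\circ\rho_{eq}(x,\xi,z,\zeta,s,\sigma)=\bigl(x,\xi/\sigma,[z],\zeta/\sigma+s\bigr).\]
So the factor $e^{-z}$ in $\rho_{eq}$ is exactly cancelled by the $e^{z}$ in $\pi_g$: the $\R^{2n}$-components are literally those of $\rho_{cl}$, and $s\,dz$ only enters the $\zeta$-slot. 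Consequently, $\psi\circ\phi_1$ imposes on the $(x_1,\xi_1/\sigma)$-components of $\dot{SS}(K^{eq}_1\circ F)$ exactly the constraints of the symplectic case. The variable $z$, like the parameter $y$, simply goes into the parameter space of the parametric theorem, and the bound $4a^2$ is undistorted. This is also the only reading under which your Steps 1--2 (``preimage of the ball'', ``projects into $D_{R_2}$'') are true. Restricting to a fundamental domain $z\in[0,1]$ would not fix anything, since $e^{-z}$ still varies by a factor $e$ there; this would distort the constant and lose the sharp threshold $k$.

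Two smaller remarks. First, cutting $F$ off in $z$ would destroy the deck-invariance that Lemma \ref{ref} relies on. The non-compactness of $G$ is in the $x$ and $s$ directions anyway, not in $z$, so keep $F$ equivariant, as the paper does. Second, a $\zeta$-periodic isotopy with compact support is the identity, so ``compactly supported'' must be read modulo $\zeta\mapsto\zeta+1$, as in Theorem \ref{thm3}. It is this periodicity, reducing to the compact set $\overline{\mathbb{B}^{2n}_{R_1}}\times\Sph^1_z\times[0,1]_\zeta$, that makes your openness argument for enlarging $R_1$ work; compactness of the support alone does not.
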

\begin{proof}
If $\pi R^2_1> k$ then the result can be directly derived from the previous discussion around lemma \ref{ref} by noting that $\phi\left(\overline{\mathbb{B}_{R_1}^{2n}\times T^*\Sph^1_z}\right)\subset D_{R_2}\times\R^{2n-2}\times T^*\Sph^1_z$ implies that there is some $\pi R^2_3<\pi R^2_2$ such that $\phi\left(\overline{\mathbb{B}_{R_1}^{2n}\times T^*\Sph^1_z}\right)\subset \overline{D}_{R_3}\times\R^{2n-2}\times T^*\Sph^1_z$. If $\pi R^2_1= k$, then $\phi\left(\overline{\mathbb{B}_{R_1}^{2n}\times T^*\Sph^1_z}\right)\subset D_{R_2}\times\R^{2n-2}\times T^*\Sph^1_z$ implies that there is some $\pi R^2_4> k$ such that $\phi\left(\overline{\mathbb{B}_{R_4}^{2n}\times T^*\Sph^1_z}\right)\subset D_{R_2}\times\R^{2n-2}\times T^*\Sph^1_z$, which is absurd in light of lemma \ref{ref}. 
\end{proof}
Together with the observations made at the beginning of the section, this shows theorem \ref{thm3}.

	\bibliographystyle{plain}
	\bibliography{./biblio.bib}
\end{document}